\numberwithin{equation}{section}
\newcommand{\email}[1]{\href{mailto:#1}{#1}}
\newtheorem{theorem}{Theorem}
\newtheorem{lemma}[theorem]{Lemma}
\newtheorem{corollary}[theorem]{Corollary}
\newtheorem{assumption}{Assumption}
\newtheorem{remark}{Remark}
\newcommand{\bmrm}[1]{{\bm{{\rm #1}}}}
\newcommand{\mat}[1]{\bmrm{#1}}
\newcommand{\bmn}{{\bm{n}}}
\newcommand{\bmU}{\bm{U}}
\newcommand{\bmF}{\bm{F}}
\newcommand{\Hmin}[1]{H_{#1,{\rm min}}}
\newcommand{\Hmax}[1]{H_{#1,{\rm max}}}
\newcommand{\bmtau}{{\bm{\tau}}}
\newcommand{\calF}{\mathcal{F}}
\newcommand{\calH}{\mathcal{H}}
\newcommand{\calL}{\mathcal{L}}
\newcommand{\calM}{\mathcal{M}}
\newcommand{\calS}{\mathcal{S}}
\newcommand{\calT}{\mathcal{T}}
\newcommand{\bbN}{\mathbb{N}}
\newcommand{\bbP}{\mathbb{P}}
\newcommand{\bbR}{\mathbb{R}}
\newcommand{\rma}{{\rm{a}}}
\newcommand{\rmb}{{\rm{b}}}
\newcommand{\rmi}{{\rm{i}}}
\newcommand{\rmp}{{\rm{p}}}
\newcommand{\rms}{{\rm{s}}}
\newcommand{\rmA}{{\rm{A}}}
\newcommand{\rmL}{{\rm{L}}}
\newcommand{\eq}{ ={}& }
\newcommand{\lea}{ \le{}& }
\newcommand{\les}{ \lesssim{}& }
\newcommand{\nn}{\nonumber}
\newcommand{\nl}{\nn\\}
\newcommand{\defeq}{\vcentcolon=}
\newcommand{\ul}[1]{\underline{#1}}
\newcommand{\ol}[1]{\overline{#1}}
\newcommand{\bdry}{\partial}
\newcommand{\Mh}[1][h]{\calM_{#1}}
\newcommand{\Th}[1][h]{\calT_{#1}}
\newcommand{\Fh}[1][h]{\calF_{#1}}
\newcommand{\Fhb}{\Fh^{\rmb}}
\newcommand{\Fhi}{\Fh^{\rmi}}
\newcommand{\T}{{T}}
\newcommand{\F}{{F}}
\newcommand{\FT}{{\bdryT}}
\newcommand{\FTi}{{\bdryTi}}
\newcommand{\FTs}{{\bdryTs}}
\newcommand{\hT}{h_\T}
\newcommand{\hTi}{h_{\T_i}}
\newcommand{\hTs}{h_{\T_\star}}
\newcommand{\hF}{h_\F} 
\newcommand{\hX}{h_X}
\newcommand{\hmin}{h_{\rm{min}}} 
\newcommand{\hmax}{h_{\rm{max}}}
\newcommand{\bdryT}{{\bdry \T}}
\newcommand{\bdryTi}{{\bdry \T_i}}
\newcommand{\bdryTs}{{\bdry \T_\star}}
\newcommand{\nor}{\bmn}
\newcommand{\norT}{\nor_{\bdryT}}
\newcommand{\norTF}{\nor_{\T\F}}
\def\R{\bbR}
\def\N{\bbN}
\newcommand{\POLY}[1]{\bbP^{#1}}
\newcommand{\HS}[1]{H^{#1}}
\newcommand{\HONE}{\HS{1}}
\newcommand{\HONEzr}{\HONE_0}
\newcommand{\LP}[1]{L^{#1}}
\newcommand{\LTWO}{\LP{2}}
\newcommand{\norm}[2][]{\|#2\|_{#1}}
\newcommand{\seminorm}[2][]{|#2|_{#1}}
\newcommand{\brac}[2][]{(#2)_{#1}}
\newcommand{\Brac}[2][]{\Big(#2\Big)_{#1}}
\newcommand{\energynorm}[1]{\norm[\rma, h]{#1}}
\newcommand{\piT}[1]{\pi_T^{#1}}
\newcommand{\piTzr}[1]{\piT{0, #1}}
\newcommand{\piFT}[1]{\pi_\FT^{#1}}
\newcommand{\piFTzr}[1]{\piFT{0, #1}}
\newcommand{\piF}[1]{\pi_F^{#1}}
\newcommand{\piFzr}[1]{\piF{0, #1}}
\newcommand{\piX}[1]{\pi_X^{#1}}
\newcommand{\pT}[1]{\rmp_T^{#1}}
\newcommand{\pTi}[1]{\rmp_{T_i}^{#1}}
\newcommand{\pTs}[1]{\rmp_{T_\star}^{#1}}
\newcommand{\ph}[1]{\rmp_h^{#1}}
\newcommand{\U}[2]{\ul{U}_{#1}^{#2}}
\newcommand{\UT}[1]{\U{T}{#1}}
\newcommand{\UTkl}{\UT{k,l}}
\newcommand{\Uh}[1]{\U{h}{#1}}
\newcommand{\Uhkl}{\Uh{k,l}}
\newcommand{\Uhklzr}{\U{h, 0}{k,l}}
\newcommand{\I}[2]{\ul{I}_{#1}^{#2}}
\newcommand{\IT}[1]{\I{T}{#1}}
\newcommand{\ITkl}{\IT{k,l}}
\newcommand{\Ih}[1]{\I{h}{#1}}
\newcommand{\Ihkl}{\Ih{k,l}}
\def\a{\rma}
\newcommand{\aT}{\a_T}
\newcommand{\sT}{\rms_T}
\newcommand{\ah}{\a_h}
\newcommand{\sh}{\rms_h}
\newcommand{\sTgrad}{\sT^{\nabla}}
\newcommand{\sTbdry}{\sT^{\partial}}
\newcommand{\sTkminus}{\sT^{(k-1)}}
\newcommand{\vT}{v_T}
\newcommand{\vh}{v_h}
\newcommand{\vF}{v_F}
\newcommand{\vFT}{v_{\FT}}
\newcommand{\vFh}{v_{\Fh}}
\newcommand{\uT}{u_T}
\newcommand{\uFT}{u_{\FT}}
\newcommand{\uFh}{u_{\Fh}}
\newcommand{\wT}{w_T}
\newcommand{\wFT}{w_{\FT}}
\newcommand{\ulvT}{\ul{v}_T}
\newcommand{\ulvTi}{\ul{v}_{T_i}}
\newcommand{\ulvTs}{\ul{v}_{T_\star}}
\newcommand{\uluT}{\ul{u}_T}
\newcommand{\ulvh}{\ul{v}_h}
\newcommand{\uluh}{\ul{u}_h}
\newcommand{\ulwT}{\ul{w}_T}
\newcommand{\deltaT}[1]{\delta_T^{#1}}
\newcommand{\deltaFT}[1]{\delta_{\FT}^{#1}}
\newcommand{\deltaFTi}[1]{\delta_{\FTi}^{#1}}
\newcommand{\deltaFTs}[1]{\delta_{\FTs}^{#1}}
\newcommand{\gT}{g_\T}
\newcommand{\calST}{\calS_\T}
\newcommand{\Ah}{\rmA_h}
\newcommand{\Lh}{\rmL_h}
\newcommand{\logLogSlopeTriangle}[5]
{
	\pgfplotsextra
	{
		\pgfkeysgetvalue{/pgfplots/xmin}{\xmin}
		\pgfkeysgetvalue{/pgfplots/xmax}{\xmax}
		\pgfkeysgetvalue{/pgfplots/ymin}{\ymin}
		\pgfkeysgetvalue{/pgfplots/ymax}{\ymax}
		
		\pgfmathsetmacro{\xArel}{#1}
		\pgfmathsetmacro{\yArel}{#3}
		\pgfmathsetmacro{\xBrel}{#1-#2}
		\pgfmathsetmacro{\yBrel}{\yArel}
		\pgfmathsetmacro{\xCrel}{\xArel}
		
		\pgfmathsetmacro{\lnxB}{\xmin*(1-(#1-#2))+\xmax*(#1-#2)} 
		\pgfmathsetmacro{\lnxA}{\xmin*(1-#1)+\xmax*#1} 
		\pgfmathsetmacro{\lnyA}{\ymin*(1-#3)+\ymax*#3} 
		\pgfmathsetmacro{\lnyC}{\lnyA+#4*(\lnxA-\lnxB)}
		\pgfmathsetmacro{\yCrel}{\lnyC-\ymin)/(\ymax-\ymin)}
		
		\coordinate (A) at (rel axis cs:\xArel,\yArel);
		\coordinate (B) at (rel axis cs:\xBrel,\yBrel);
		\coordinate (C) at (rel axis cs:\xCrel,\yCrel);
		
		\draw[#5]   (A)-- node[pos=0.5,anchor=north] {\scriptsize{1}}
		(B)-- 
		(C)-- node[pos=0.,anchor=west] {\scriptsize{#4}} 
		cycle;
	}
}
\newcommand{\reverseLogLogSlopeTriangle}[5]
{
	\pgfplotsextra
	{
		\pgfkeysgetvalue{/pgfplots/xmin}{\xmin}
		\pgfkeysgetvalue{/pgfplots/xmax}{\xmax}
		\pgfkeysgetvalue{/pgfplots/ymin}{\ymin}
		\pgfkeysgetvalue{/pgfplots/ymax}{\ymax}
		
		\pgfmathsetmacro{\xArel}{#1}
		\pgfmathsetmacro{\yArel}{#3}
		\pgfmathsetmacro{\xBrel}{#1-#2}
		\pgfmathsetmacro{\yBrel}{\yArel}
		\pgfmathsetmacro{\xCrel}{\xBrel}
		
		\pgfmathsetmacro{\lnxB}{\xmin*(1-(#1-#2))+\xmax*(#1-#2)} 
		\pgfmathsetmacro{\lnxA}{\xmin*(1-#1)+\xmax*#1} 
		\pgfmathsetmacro{\lnyA}{\ymin*(1-#3)+\ymax*#3} 
		\pgfmathsetmacro{\lnyC}{\lnyA+#4*(\lnxA-\lnxB)}
		\pgfmathsetmacro{\yCrel}{\lnyC-\ymin)/(\ymax-\ymin)}
		
		\coordinate (A) at (rel axis cs:\xArel,\yArel);
		\coordinate (B) at (rel axis cs:\xBrel,\yBrel);
		\coordinate (C) at (rel axis cs:\xCrel,\yCrel);
		
		\draw[#5]   (A)-- node[pos=0.5,anchor=north] {\scriptsize{1}}
		(B)-- 
		(C) -- node[pos=0.,anchor=east] {\scriptsize{#4}}
		cycle;
	}
}
\acrodef{xfem}[XFEM]{extended finite element method}
\acrodef{pde}[PDE]{partial differential equation}
\acrodef{fe}[FE]{finite element}
\acrodef{fem}[FEM]{finite element method}
\acrodef{vem}[VEM]{virtual element method}
\acrodef{hho}[HHO]{hybrid high-order}
\acrodef{hdg}[HDG]{hybridizable discontinuous Galerkin}
\acrodef{cg}[CG]{continuous Galerkin}
\acrodef{dg}[DG]{discontinuous Galerkin}
\begin{document}
	
\title{Conditioning of a Hybrid High-Order scheme on meshes with small faces}
\author[1]{Santiago Badia}
\author[2]{J\'er\^ome Droniou}
\author[3]{Liam Yemm}
\affil[1]{School of Mathematics, Monash University, Clayton, Victoria, 3800, Australia \& Centre Internacional 
\newline  de M\`etodes Num\`erics a l'Enginyeria, Barcelona, Spain
\email{santiago.badia@monash.edu}}
\affil[2]{School of Mathematics, Monash University, Clayton, Victoria, 3800, Australia, \email{jerome.droniou@monash.edu}}
\affil[3]{School of Mathematics, Monash University, Clayton, Victoria, 3800, Australia, \email{liam.yemm@monash.edu}}
\date{}
\maketitle 

\begin{abstract}
	We conduct a condition number analysis of a Hybrid High-Order (HHO) scheme for the Poisson problem. We find the condition number of the statically condensed system to be independent of the number of faces in each element, or the relative size between an element and its faces. The dependence of the condition number on the polynomial degree is tracked. Next, we consider HHO schemes on cut background meshes, which are commonly used in unfitted discretisations. It is well known that the linear systems obtained on these meshes can be arbitrarily ill-conditioned due to the presence of sliver-cut and small-cut elements. We show that the condition number arising from HHO schemes on such meshes is not as negatively effected as those arising from conforming methods. We describe how the condition number can be improved by aggregating ill-conditioned elements with their neighbours.
	\medskip\\
	\textbf{Key words:} Hybrid High-Order methods, condition number, small faces. 
	\medskip\\
	\textbf{MSC2010:} 65N12, 65N15, 65N30.
\end{abstract}


\section{Introduction}\label{sec:introduction}


Several hybrid discretisation methods have been proposed in recent years for the numerical discretisation of partial differential equations \cite{di-pietro.ern.ea:2014:arbitrary,beirao-da-veiga.brezzi.ea:2013:basic,Cockburn2009}. One of the selling points of these schemes is their geometrical flexibility. Discretisation spaces are not bound to specific element topologies and can readily be used on general polytopal meshes. Body-fitted unstructured mesh generation is one of the main bottlenecks in complex numerical simulations, which requires intensive human intervention. Usually, these meshes are composed of tetrahedral (and/or hexahedral) elements. Polytopal methods can provide sought-after flexibility in the mesh generation step.

In this work, we focus on the \ac{hho} method. Developed in \cite{di-pietro.ern.ea:2014:arbitrary, di-pietro.ern:2015:hybrid}, the HHO method is a modern polytopal method for elliptic PDEs. A key aspect of HHO is its applicability to generic meshes with arbitrarily shaped elements. Additionally, HHO methods are of arbitrary order, dimension independent, and are amenable to static condensation. We refer the reader to \cite{di-pietro.droniou:2020:hybrid} for a thorough review of the method and its applications. An analysis on skewed meshes has been carried out for a diffusion problem in \cite{droniou:2020:interplay} and identifies how the error estimate is impacted by the element distortion and local diffusion tensor. The recent work of \cite{droniou.yemm:2021:robust} shows the HHO method to be accurate on meshes possessing elements with arbitrarily many small faces. 

Unfitted (a.k.a.~embedded and immersed) discretisations can also simplify the geometrical discretisation step. The domain of interest is embedded in a simple background mesh (e.g., a Cartesian grid). The boundary (or interface) treatment is tackled at the numerical integration and discretisation step. 
Many unfitted \ac{fe} schemes that rely on a standard \ac{fe} space on the background mesh have been proposed;~see, e.g.~the \ac{xfem}~\cite{belytschko_arbitrary_2001}, the cutFEM method~\cite{burman_cutfem_2015}, the aggregated \ac{fem} ~\cite{badia.verdugo.ea:2018:aggregated}, the finite cell method~\cite{Schillinger2015} and \ac{dg} methods with element aggregation~\cite{johansson2013high}. We also make note of the reference \cite{beirao-da-veiga.canuto.ea:2021:equilibrium}, which uses a \ac{vem} to model a rigid leaflet submerged in a fluid and fixed to a rotational spring at one end. The thin leaflet `cuts' through an isotropic background mesh, thus requiring the model to be applied on cut meshes.

Unfitted formulations can produce arbitrarily ill-conditioned linear systems~\cite{de-prenter.erhoosel.ea:2017:condition}. The intersection of a background element with the physical domain can be arbitrarily small and with an unbounded aspect ratio. It is known as the \emph{small cut element problem}. This problem is also present on unfitted interfaces with a high contrast of physical properties~\cite{Neiva2021}. Few unfitted formulations are fully robust and optimal regardless of cut location or material contrast. The ill-conditioning issue was addressed in~\cite{burman2010ghost} via the so-called ghost penalty stabilisation. Instead of adding stabilisation terms, the small cut element problem can be fixed by element aggregation (or agglomeration). This approach has been proposed in~\cite{johansson2013high} for \ac{dg} methods. While aggregation is natural in \ac{dg} methods (these schemes can readily be used on polytopal meshes), its extension to conforming spaces is more involved. The design of well-posed $\mathcal{C}^{0}$ Lagrangian finite elements on agglomerated meshes has been proposed in \cite{badia.verdugo.ea:2018:aggregated}. The aggregated \ac{fem} constructs a discrete extension operator from well-posed to ill-posed degrees of freedom that preserves continuity. All these formulations enjoy good numerical properties, such as stability, condition number bounds, optimal convergence and continuity with respect to data.

The \ac{fe} discretisation of linear second-order elliptic operators (e.g. the Laplacian) in weak form produces linear systems such that the $\ell^2$-condition number (on shape regular, quasi-uniform meshes) scales as the inverse square of the mesh size \cite{Ern2006Jan}. Likewise, the condition number on regular triangular meshes of interior penalty Galerkin and local discontinuous Galerkin methods scale as the inverse square of the mesh size, whereas the condition number of non-symmetric DG methods can potentially scale sub-optimally \cite{castillo:2002:performance}. We refer to \cite{Cockburn2013} for the condition number analysis of \ac{hdg} methods on regular simplicial quasi-uniform meshes. The authors in \cite{Mascotto2018} investigate experimentally the ill-conditioning of the \ac{vem} for high-order bases on distorted meshes. In this work, we analyse the properties of the linear systems that arise from \ac{hho} formulations. We prove estimates for the condition number arising from such systems. Under general assumptions on the stabilisation term (allowing for standard choices of HHO stabilisation) and when $L^2$-orthonormal bases are chosen for face polynomial spaces, we show that the estimates remain robust with respect to small element faces and track the dependence in the estimates of the polynomial degree of the unknowns. 
The linear systems in HHO methods are obtained after the static condensation of the element unknowns.
This process allows the global system to depend only on the face unknowns \cite[Appendix B.3.2]{di-pietro.droniou:2020:hybrid}. In Section \ref{sec:eig.estimates} we state some estimates on the spectrum and conditioning of this condensed operator. We find that the condition number of the statically condensed system scales at worst like $\hmin^{-2}$ (where $\hmin$ denotes the minimum element diameter in the mesh) and that this bound is not affected by small faces. We also prove that if each face is attached (or close) to at least one element of diameter comparable to a characteristic mesh size $\hmax$, then the condition number scales as $\hmin^{-1}\hmax^{-1}$.
This sharper result is of practical interest when using cut meshes, since it is common to find  small cells on the boundary in touch with larger ones.
To the best of our knowledge, no condition number estimates on general meshes exist for \ac{hho} or \ac{vem}. We note that, given the links between HHO and other polytopal methods (see e.g.~\cite[Sec.~5.5.5]{di-pietro.droniou:2020:hybrid} for the relationship between \ac{hho} and non-conforming \ac{vem}, or \cite{Cockburn.Di-Pietro.ea:16} for the link \ac{hho}--\ac{hdg}), our results could easily be extended to such methods; more generally, the analysis of condition number we carry out here uses a rather general approach and would certainly extend to even more polytopal methods.

Next, we apply the \ac{hho} method on \emph{cut} meshes obtained by the intersection of cells in a background (usually Cartesian) mesh and the physical domain (represented as the interior of an oriented boundary representation, e.g., a surface mesh). The intersection is cell-wise and represents a sound alternative to unstructured mesh generation \cite{2110.01378}. The analysis tells us the potential conditioning issues of \ac{hho} schemes on such meshes, for which arbitrarily small elements (and faces) appear scattered among large elements \cite{burman.cicuttin.ea:2021:unfitted}. Based on the analysis, we know that we must aggregate highly distorted small cut elements (e.g., due to sliver cuts) to interior elements. Since arbitrary small faces do not affect condition number bounds, there is no need for face aggregation or stabilisation. This way, we end up with an \ac{hho} method on aggregated cut meshes that leads to well-posed linear systems and optimal condition numbers.

Hybrid methods on cut meshes have some benefits compared to more standard unfitted \acp{fe}. First, we can enforce Dirichlet boundary conditions strongly; there are degrees of freedom located on boundaries faces. In unfitted standard \acp{fe}, degrees of freedom are defined in the background mesh. Dirichlet boundary conditions and trace continuity on interfaces are weakly enforced (using, e.g., Nitsche's method \cite{hansbo2002unfitted}). Second, the method does not involve the tuning of additional stabilisation parameters, which can have an impact on results \cite{badia2021linking}. Third, the extension to high order is straightforward. It is more complicated in face-based ghost penalty (it involves penalty terms on jumps of high-order derivatives) \cite{burman2010ghost} or aggregated \acp{fe} (extension operators for high order can amplify rounding errors) \cite{badia.verdugo.ea:2018:aggregated}. 



The remainder of this paper is organised as follows: In Section \ref{sec:model.and.results} we introduce the HHO method and state our key findings. In Section \ref{sec:proofs} we prove the results and discuss viable stabilisation options, in Section \ref{sec:unfitted} we include a brief discussion of HHO on cut meshes, and in Section \ref{sec:numerical} we conduct a thorough numerical study of the condition number on various meshes.

\section{Presentation of the HHO method and main result}\label{sec:model.and.results}

\subsection{Model problem}

We take a polytopal domain \(\Omega\subset \R^d\), \(d\ge 2\) and a source term \(f\in \LTWO(\Omega)\), and consider the Dirichlet problem: find \(u\) such that 
\[
\begin{aligned}
	-\Delta u \eq f \quad\text{in}\quad\Omega,\\
	u \eq 0 \quad\text{on}\quad\partial\Omega.
\end{aligned}
\]
The variational problem reads: find \(u\in \HONEzr( \Omega) \) such that
\begin{equation}\label{eq:weak.form}
	\a(u, v)  = \calL(v), \qquad\forall v\in \HONEzr(\Omega), 
\end{equation}
where \(\a(u, v) \defeq \brac[\Omega]{\nabla u, \nabla v}\) and \(\calL(v) \defeq \brac[\Omega]{f, v}\). Here and in the following, \(\brac[X]{\cdot, \cdot}\) is the \(\LTWO\)-inner product of scalar- or vector-valued functions on a set \(X\) for its natural measure. 

\subsection{HHO scheme}

Let \(\calH\subset(0, \infty)\) be a countable set of mesh sizes with a unique cluster point at \(0\). For each \(h\in\calH\), we partition the domain \(\Omega\) into a mesh \(\Mh=(\Th, \Fh)\), for which a detailed definition can be found in \cite[Definition 1.4]{di-pietro.droniou:2020:hybrid}. The set of mesh elements \(\Th\) is a disjoint set of polytopes such that \(\ol{\Omega}=\bigcup_{T\in\Th}\ol{T}\). The set \(\Fh\) is a collection of mesh faces forming a partition of the mesh skeleton, i.e. \(\bigcup_{T\in\Th}\bdryT=\bigcup_{F\in\Fh}\ol{F}\). The boundary faces \(F\subset\partial \Omega\) are gathered in the set \(\Fhb\). The parameter \(h\) is given by \(h\defeq\max_{T\in\Th}\hT\) where, for \(X=T\in\Th\) or \(X=F\in\Fh\), \(\hX\) denotes the diameter of \(X\). We shall also collect the set of faces attached to an element \(T\in\Th\) in the set \(\Fh[T]:=\{F\in\Fh:F\subset T\}\). The (constant) unit normal to \(F\in\Fh[T]\) pointing outside \(T\) is denoted by \(\norTF\), and \(\norT:\bdryT\to\R^d\) is the piecewise constant outer unit normal defined by \((\norT)|_F=\norTF\) for all \(F\in\Fh[T]\). Throughout this work we make the following assumption on the meshes, which allows for some meshes with arbitrarily large numbers of face in each element, or faces that have an arbitrarily small diameter compared to their elements' diameters.

\begin{assumption}[Regular mesh sequence]\label{assum:star.shaped}	
	There exists a constant \(\varrho>0\) such that, for each \(h\in\calH\), each \(T\in\Th\) is connected by star-shaped sets with parameter \(\varrho\), as defined in \cite[Definition 1.41]{di-pietro.droniou:2020:hybrid}. 
\end{assumption}

From hereon, we shall denote \(f\lesssim g\) to mean \(f \le Cg\) where \(C\) is a constant depending only on \(\Omega\), \(d\) and \(\varrho\), but independent of the considered face/element, the degrees of the considered polynomial spaces, and quantities \(f,g\). We shall also write \(f\approx g\) if \(f\lesssim g\) and \(g\lesssim f\). When necessary, we make some additional dependencies of the constant \(C\) explicit.

\subsubsection{Local construction}

Let \(X=T\in\Th\) or \(X=F\in\Fh\) be a face or an element in a mesh \(\Mh\), and let \(\POLY{\ell}(X)\) be the set of \(d_X\)-variate polynomials of degree \(\le \ell\) on \(X\), where \(d_X\) is the dimension of \(X\).
The space of piecewise discontinuous polynomial functions on an element boundary is given by
\begin{equation}\label{eq:bdry.space.def}
\POLY{\ell}(\Fh[T]) \defeq \{v\in L^1(\bdryT):v|_F\in\POLY{\ell}(F)\quad\forall F\in\Fh[T]\}.
\end{equation}
The \(L^2\) orthogonal projector \(\piX{0, \ell}:L^1(X) \to \POLY{\ell}(X)\) is defined as the unique polynomial satisfying
\begin{equation}\label{eq:L2proj.def}
	\brac[X]{v - \piX{0, \ell}v, w} = 0 \qquad \forall w \in \POLY{\ell}(X).
\end{equation}

Fix two natural numbers $k,l\in\N$, $l \ge k-1$. For each element $T\in\Th$, the local space of unknowns is defined as
\[
	\UTkl\defeq \POLY{l}(T)\times\POLY{k}(\Fh[T]). 
\]
The interpolator $\ITkl:\HONE(T)\to\UTkl$ is defined for all $v\in\HONE(T)$ as
\[
	\ITkl v = (\piTzr{l}v, \piFTzr{k}v)
\]
where \(\piFTzr{k}\) is the projector onto the space \(\POLY{k}(\Fh[T])\) satisfying \(\piFTzr{k}v|_F = \piFzr{k}v\) for all \(F\in\Fh[T]\) and $v\in L^1(\partial T)$. We endow the space \(\UTkl\) with the discrete energy-like seminorm \(\norm[1, T]{{\cdot}}\) defined for all \(\ulvT = (\vT, \vFT)\in\UTkl\) via
\begin{equation}\label{eq:discrete.norm.def}
	\norm[1, T]{\ulvT}^2 \defeq \norm[T]{\nabla \vT}^2 + \hT^{-1}\norm[\bdryT]{\vFT - \vT}^2.
\end{equation}

On each element we locally reconstruct a potential from the space of unknowns via the operator $\pT{k+1}:\UTkl\to\POLY{k+1}(T)$ defined to satisfy, for all $\ulvT\in\UTkl$ and \(w\in\POLY{k+1}(T)\),
\begin{equation}\label{eq:pT.def}
	\brac[\T]{\nabla\pT{k+1}\ulvT, \nabla w} = -\brac[\T]{\vT, \Delta w} + \brac[\bdryT]{\vFT,\nabla w\cdot\norT},
\end{equation}
\begin{equation}\label{eq:pT.closure}
	\brac[\T]{\vT-\pT{k+1}\ulvT, 1} = 0. 
\end{equation}
This potential reconstruction allows us to approximate $\a(u,v)$ on each element by the bilinear form $\aT:\UTkl\times\UTkl\to\R$ defined as
\begin{equation*}
	\aT(\uluT, \ulvT) \defeq \brac[\T]{\nabla\pT{k+1}\uluT, \nabla\pT{k+1}\ulvT} + \sT(\uluT,\ulvT),
\end{equation*}
where $\sT:\UTkl\times\UTkl\to\R$ is a symmetric, positive semi-definite stabilisation such that
\begin{equation}\label{eq:norm.equivalence}
	C_{\rma}^{-1}\norm[1, T]{\ulvT}^2 \le \aT(\ulvT,\ulvT) \le C_{\rma}\norm[1, T]{\ulvT}^2
\end{equation}
and for all $\ulvT\in\UTkl$, $w\in\POLY{k+1}(T)$,
\begin{equation}\label{eq:polynomial.consistency}
	\sT(\ulvT,\ITkl w) = 0,
\end{equation}
where $C_{\rma}$ is a positive constant that possibly depends on polynomial degrees $l$, $k$, the mesh regularity $\varrho$, and dimension $d$, but is independent of the element diameter $\hT$. Equation \eqref{eq:norm.equivalence} is required to ensure that the global bilinear form describes a norm on the discrete space, and that optimal approximation rates with respect to $h$ are achieved \cite[Lemma 2.18]{di-pietro.droniou:2020:hybrid}. However, tracking the dependency of $C_{\rma}$ with respect to $l$, $k$ and obtaining condition number estimates via equation \eqref{eq:norm.equivalence} leads to sub-optimal results. As such, we assume the following extra, more precise, conditions on the bilinear form $\sT$, in which the difference operators \(\deltaT{l}:\UTkl\to\POLY{l}(T)\) and \(\deltaFT{k}:\UTkl\to\POLY{k}(\Fh[T])\) are defined as: for all \(\ulvT\in\UTkl\),
\[
	\deltaT{l} \ulvT \defeq \piTzr{l}(\pT{k+1}\ulvT - \vT) \quad\textrm{and}\quad \deltaFT{k} \ulvT \defeq \piFTzr{k}(\pT{k+1}\ulvT - \vFT).
\]

\begin{assumption}\label{assum:aT}
	For all $\ulvT\in\UTkl$ it holds that
	\begin{equation}\label{eq:aT.lower.bound}
		\norm[T]{\nabla\pT{k+1}\ulvT}^2 + \hT^{-1}\norm[\bdryT]{\deltaFT{k}\ulvT}^2 \lesssim \aT(\ulvT,\ulvT),
	\end{equation}
	and for all $\ul{v}_{T,\partial}=(0,\vFT)\in\UTkl$ it holds that
	\begin{equation}\label{eq:aT.faces.upper.bound}
		\aT(\ul{v}_{T,\partial},\ul{v}_{T,\partial}) \lesssim \norm[T]{\nabla\pT{k+1}\ul{v}_{T,\partial}}^2 + \hT^{-1}\norm[\bdryT]{\deltaFT{k}\ul{v}_{T,\partial}}^2,
	\end{equation}
	where the hidden constants in \eqref{eq:aT.lower.bound} and \eqref{eq:aT.faces.upper.bound} depend on $\varrho$ and $d$ but are independent of $l$, $k$ and $h$.
\end{assumption}

We consider throughout this work the stabilisation form defined in \cite[Example 2.8]{di-pietro.droniou:2020:hybrid} (with the scaling change $\hF^{-1}\to\hT^{-1}$)
\begin{equation}\label{eq:stab.def}
	\sT(\uluT, \ulvT) \defeq \hT^{-2}\brac[\T]{\deltaT{l}\uluT, \deltaT{l}\ulvT} + \hT^{-1}\brac[\bdryT]{\deltaFT{k}\uluT, \deltaFT{k}\ulvT}.
\end{equation}

We show in Section \ref{sec:analysis.stab} that the stabilisation \eqref{eq:stab.def} satisfies Assumption \ref{assum:aT}.

\subsubsection{Global formulation}

The global space of unknowns is defined as 
\[
	\Uhkl\defeq \Big\{\ulvh=((\vT)_{T\in\Th},(\vF)_{F\in\Fh})\,:\,\vT\in\POLY{l}(T)\quad\forall T\in\Th\,,
	\vF\in\POLY{k}(F)\quad\forall F\in\Fh \Big\}.
\]
To account for the homogeneous boundary conditions, the following subspace is also introduced:
\[
	\Uhklzr\defeq\{\ulvh \in\Uhkl:v_{F}=0\quad\forall F\in\Fhb\}.
\]
For any \(\ulvh\in\Uhkl\) we denote its restriction to an element \(T\) by \(\ulvT=(\vT,\vFT)\in\UTkl\) (where, naturally, \(\vFT\) is defined from \((\vF)_{F\in\Fh[T]}\)). We also denote by \(\vh\) the piecewise polynomial function satisfying \(\vh|_T=\vT\) for all \(T\in\Th\).

The global bilinear forms \(\ah:\Uhkl\times\Uhkl\to\mathbb{R}\) and \(\sh:\Uhkl\times\Uhkl\to\mathbb{R}\) are defined as
\[
	\ah(\uluh, \ulvh) \defeq \sum_{T\in\Th} \aT(\uluT,\ulvT)
	\quad\textrm{and}\quad
	\sh(\uluh, \ulvh) \defeq \sum_{T\in\Th} \sT(\uluT,\ulvT).
\]
We also define the discrete energy norm \(\energynorm{{\cdot}}\) on \(\Uhklzr\) as
\begin{equation}\label{eq:energy.norm.def}
	\energynorm{\ulvh}\defeq \ah(\ulvh,\ulvh)^\frac{1}{2} \qquad \forall \ulvh\in\Uhklzr.
\end{equation}	
The HHO scheme reads: find \(\uluh\in\Uhklzr\) such that
\begin{equation}\label{eq:discrete.problem}
	\ah(\uluh, \ulvh) = \mathcal L_h(\ulvh) \qquad\forall \ulvh\in\Uhklzr, 
\end{equation}
where \(\mathcal L_h:\Uhklzr\to\R\) is a linear form defined as
\begin{equation*}
	\mathcal L_h(\ulvh) \defeq \sum_{T\in\Th}\brac[T]{f,\vT}. 
\end{equation*}

Under assumptions \eqref{eq:norm.equivalence} and \eqref{eq:polynomial.consistency} on the bilinear form $\aT$, the scheme \eqref{eq:discrete.problem} satisfies the energy error estimate
\[
	\energynorm{\uluh - \Ihkl u} \le Ch^{k+1}\seminorm[\HS{k+2}(\Th)]{u},
\]
where $\Ihkl|_T=\ITkl$ for all $T\in\Th$, and $C$ is a positive constant that depends on $l$, $k$, $\varrho$, and $d$, but is independent of $h$ \cite[Theorem 2.27]{di-pietro.droniou:2020:hybrid}. An estimate of the dependency with respect to $l$ and $k$ for a diffusion scheme with a boundary based stabilisation is provided in \cite{aghili.di-pietro.ea:2017:hp-HHO}.

\subsubsection{Statically condensed system and eigenvalue estimates}\label{sec:eig.estimates}

The static condensation procedure, as outlined in \cite[Appendix B.3]{di-pietro.droniou:2020:hybrid}, allows for the elimination of the element unknowns. Selecting $\ulvh$ with one free element component $v_T$, and all other element and face components vanishing, we see that the solution \(\uluh\) to problem \eqref{eq:discrete.problem} satisfies for all \(T\in\Th\) and \(\vT\in\POLY{l}(T)\)
\[
	\aT((\uT, \uFT), (\vT, 0)) = \brac[\T]{f, \vT}.
\]
This can be alternatively written as
\[
	\aT((\uT, 0), (\vT, 0)) = \brac[\T]{f, \vT} - \aT((0, \uFT), (\vT, 0)).
\]
Noting that the bilinear form $(u_T,v_T)\in\POLY{k}(T)\times\POLY{k}(T)\mapsto \aT((u_T,0),(v_T,0))$ is coercive (due to \eqref{eq:norm.equivalence}),
we can define the polynomial \(\gT\in\POLY{l}(T)\) and the linear operator \(\calST:\POLY{k}(\Fh[T])\to\POLY{l}(T)\) via
\begin{alignat}{2}
	\aT((\gT, 0) , (\vT, 0)) \eq \brac[\T]{f, \vT} \quad&\forall\vT\in\POLY{l}(T), \label{eq:def:gT} \\
	\aT((\calST\uFT, 0) , (\vT, 0)) \eq -\aT((0, \uFT) , (\vT, 0)) \quad&\forall\vT\in\POLY{l}(T). \label{eq:ST.def}
\end{alignat}
Therefore, \(\uT\) is calculated from \(\uFT\) via the affine transformation
\begin{equation}\label{eq:uT}
	\uT = \calST\uFT + \gT.
\end{equation}
Substituting \eqref{eq:uT} into \eqref{eq:discrete.problem} and testing against \(\ulvh = (0, \vFh) = (\brac[T\in\Th]{0}, \brac[F\in\Fh]{\vF}) \in \Uhklzr\) yields
\begin{equation*}
	\sum_{T\in\Th}\aT((\calST\uFT, \uFT) , (0, \vFT)) + \sum_{T\in\Th} \aT((\gT, 0) , (0, \vFT)) = 0.
\end{equation*}
Setting
\[
	\POLY{k}_0(\Fh):=\{\uFh=(u_F)_{F\in\Fh}\,:\,u_F\in\POLY{k}(F)\quad\forall F\in\Fh\,,\quad u_F=0\mbox{ if $F\subset\partial\Omega$}\},
\]
the statically condensed problem then reads: find \(\uFh\in\POLY{k}_0(\Fh)\) such that
\begin{equation}\label{hho:statically.condensed}
	\Ah(\uFh, \vFh) = \Lh(\vFh)\quad\forall\vFh\in\POLY{k}_0(\Fh),
\end{equation}
where
\begin{align}
	\Ah(\uFh, \vFh) \defeq{}& \sum_{T\in\Th}\aT((\calST\uFT, \uFT) , (0, \vFT)),
	\label{def:Ah.sc}\\
	\Lh(\vFh) \defeq{}& \sum_{T\in\Th} -\aT((\gT, 0) , (0, \vFT)).\nonumber
\end{align}

Upon choosing bases of the spaces $\POLY{k}(F)$ for $F\in\Fhi$, \eqref{hho:statically.condensed} takes the equivalent algebraic form
\[
\mat{A}_h\bmU = \bmF
\]
where $\mat{A}_h$ is the matrix of the bilinear form $\Ah$, $\bmU$ the vector of unknowns and $\bmF$ the source term corresponding to $\Lh$.
Our main result is the following; its proof is given in Section \ref{sec:proof.eigs}.

\begin{theorem}[Eigenvalue and condition number estimates]\label{th:estimates}
For each $F\in\Fhi$, denote by $T_F^+,T_F^-$ the two elements on each side of $F$, and define the characteristic lengths $\Hmin{\Fh}$ and $\Hmax{\Fh}$ by
\[
\Hmin{\Fh}=\min_{F\in\Fh}\left(h_{T_F^+}+h_{T_F^-}\right)\,,\quad\Hmax{\Fh}^{-1}=\max_{F\in\Fh}\left(h_{T_F^+}^{-1}+h_{T_F^-}^{-1}\right).
\]
If, for each $F\in\Fhi$, the basis on $\POLY{k}(F)$ is orthonormal for the $L^2(F)$-inner product, then
the minimal eigenvalue, maximal eigenvalue and condition number of $\mat{A}_h$ satisfy
\begin{subequations}\label{est:lambda.kappa}
\begin{align}
\label{est:lambda.min}
\lambda_{\rm min}(\mat{A}_h)\gtrsim{}& \Hmin{\Fh}\,,\\
\label{est:lambda.max}
\lambda_{\rm max}(\mat{A}_h)\lesssim{}& (k+1)^2 \Hmax{\Fh}^{-1}\,,\\
\label{est:kappa}
\kappa(\mat{A}_h)\lesssim{}& (k+1)^2 \Hmax{\Fh}^{-1}\Hmin{\Fh}^{-1}.
\end{align}
\end{subequations}
\end{theorem}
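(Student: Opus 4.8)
The plan is to pass from the matrix $\mat{A}_h$ to the bilinear form $\Ah$ and estimate a single Rayleigh quotient. Since the chosen basis of each $\POLY{k}(F)$ is $L^2(F)$-orthonormal, the Euclidean norm of a coefficient vector $\bmU$ equals $\sum_{F\in\Fhi}\norm[F]{u_F}^2$, so that $\lambda_{\rm min}(\mat{A}_h)$ and $\lambda_{\rm max}(\mat{A}_h)$ are exactly the infimum and supremum of $\Ah(\uFh,\uFh)/\sum_{F\in\Fhi}\norm[F]{u_F}^2$ over nonzero $\uFh\in\POLY{k}_0(\Fh)$. The first step is to rewrite the numerator. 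Writing $\ulvT^\star\defeq(\calST\uFT,\uFT)$, the defining relation \eqref{eq:ST.def} says precisely that $\aT(\ulvT^\star,(\vT,0))=0$ for every $\vT\in\POLY{l}(T)$; hence $\aT(\ulvT^\star,(0,\uFT))=\aT(\ulvT^\star,\ulvT^\star)$, and since \eqref{eq:ST.def} is the optimality condition of a coercive quadratic (by \eqref{eq:norm.equivalence}) the element component $\calST\uFT$ is the $\aT$-minimiser, giving the identity $\Ah(\uFh,\uFh)=\sum_{T\in\Th}\aT(\ulvT^\star,\ulvT^\star)=\sum_{T\in\Th}\min_{\wT\in\POLY{l}(T)}\aT((\wT,\uFT),(\wT,\uFT))=\energynorm{\ulvh^\star}^2$, where $\ulvh^\star$ collects the $\ulvT^\star$. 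Both bounds are read off this identity.

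For the upper bound \eqref{est:lambda.max}, the minimisation immediately gives $\Ah(\uFh,\uFh)\le\sum_T\aT((0,\uFT),(0,\uFT))$, and I would bound each local term with \eqref{eq:aT.faces.upper.bound}. The reconstruction gradient is estimated by testing \eqref{eq:pT.def} (with $\vT=0$) against $\pT{k+1}(0,\uFT)$ and applying a discrete polynomial trace inequality to $\nabla\pT{k+1}(0,\uFT)\cdot\norT$, which is where the factor $(k+1)^2$ enters; the stabilisation term $\hT^{-1}\norm[\bdryT]{\deltaFT{k}(0,\uFT)}^2$ is then controlled the same way, using that the closure \eqref{eq:pT.closure} forces $\pT{k+1}(0,\uFT)$ to have zero mean, followed by a mean-zero Poincaré inequality and a degree-independent multiplicative trace inequality. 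This yields $\aT((0,\uFT),(0,\uFT))\lesssim(k+1)^2\hT^{-1}\norm[\bdryT]{\uFT}^2$. Summing and regrouping $\sum_T\hT^{-1}\norm[\bdryT]{\uFT}^2=\sum_{F\in\Fhi}(h_{T_F^+}^{-1}+h_{T_F^-}^{-1})\norm[F]{u_F}^2\le\Hmax{\Fh}^{-1}\sum_{F\in\Fhi}\norm[F]{u_F}^2$ gives \eqref{est:lambda.max}.

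For the lower bound \eqref{est:lambda.min}, the key reduction is the weighted identity $\sum_T\hT\norm[\bdryT]{\uFT}^2=\sum_{F\in\Fhi}(h_{T_F^+}+h_{T_F^-})\norm[F]{u_F}^2\ge\Hmin{\Fh}\sum_{F\in\Fhi}\norm[F]{u_F}^2$; the sharp factor $\Hmin{\Fh}$ (rather than $\hmin$) comes precisely from keeping the local weight $\hT$ and not bounding it crudely. It therefore suffices to prove $\sum_T\hT\norm[\bdryT]{\uFT}^2\lesssim\Ah(\uFh,\uFh)$. To keep the constant independent of $k$ I would use \eqref{eq:aT.lower.bound} instead of \eqref{eq:norm.equivalence}, and write $\uFT=\piFTzr{k}\pT{k+1}\ulvT^\star-\deltaFT{k}\ulvT^\star$, so $\hT\norm[\bdryT]{\uFT}^2\lesssim\hT\norm[\bdryT]{\pT{k+1}\ulvT^\star}^2+\hT\norm[\bdryT]{\deltaFT{k}\ulvT^\star}^2$. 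The last term equals $\hT^2$ times a quantity already controlled by $\aT(\ulvT^\star,\ulvT^\star)$, hence is $\lesssim\hmax^2\,\aT(\ulvT^\star,\ulvT^\star)$; the first term is treated with the degree-independent multiplicative trace inequality, converting $\hT\norm[\bdryT]{\pT{k+1}\ulvT^\star}^2$ into $\norm[T]{\pT{k+1}\ulvT^\star}^2+\hT^2\norm[T]{\nabla\pT{k+1}\ulvT^\star}^2$, the gradient part again being $\lesssim\hmax^2\,\aT(\ulvT^\star,\ulvT^\star)$.

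The remaining, and \textbf{main}, obstacle is the $L^2$-term $\sum_T\norm[T]{\pT{k+1}\ulvT^\star}^2=\norm[\Omega]{p_h^\star}^2$ (with $p_h^\star|_T=\pT{k+1}\ulvT^\star$): locally the energy controls only gradients and stabilisation, and the constant modes are linked only through the homogeneous condition $u_F=0$ on $\Fhb$. This is exactly where a \emph{global} argument is unavoidable, and I would invoke a discrete Poincaré inequality for the HHO reconstruction, $\norm[\Omega]{p_h^\star}\lesssim\energynorm{\ulvh^\star}$, valid for $\ulvh^\star$ with vanishing boundary face values. The delicate point is that this inequality must hold with a constant independent both of the polynomial degrees and of the presence of arbitrarily small faces; this robustness is furnished by the analysis of \cite{droniou.yemm:2021:robust}, and it is this global constant (together with the absorbed $\hmax^2\lesssim\operatorname{diam}(\Omega)^2$) that carries the dependence on $\Omega$ hidden in the symbol $\lesssim$ of \eqref{est:lambda.min}. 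Combining the two bounds then yields \eqref{est:kappa}.
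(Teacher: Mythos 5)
Your proposal is correct and follows essentially the same route as the paper: the same passage to the Rayleigh quotient via $L^2$-orthonormal bases, the same identity $\Ah(\uFh,\uFh)=\sum_{T\in\Th}\aT((\calST\uFT,\uFT),(\calST\uFT,\uFT))$ (which the paper obtains by the cancellation in \eqref{eq:ST.def} rather than your equivalent minimisation phrasing), the bound by $\aT((0,\uFT),(0,\uFT))$ combined with \eqref{eq:aT.faces.upper.bound} and the discrete trace inequality for the $(k+1)^2$ upper bound, and \eqref{eq:aT.lower.bound} plus a global discrete Poincar\'e-type inequality for the lower bound. The only substantive difference is that where you cite \cite{droniou.yemm:2021:robust} for the degree- and small-face-robust control of $\norm[\Omega]{\ph{k+1}\ulvh}$, the paper proves this step self-containedly by duality with the divergence operator (surjectivity of $\nabla\cdot:\HONE(\Omega)^d\to\LTWO(\Omega)$); otherwise the two arguments coincide.
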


\begin{remark}[Characteristic lengths]\label{rem:HminHmax}
Setting $h_{\rm min}=\min_{T\in\Th}\hT$, we have $\Hmin{\Fh}\gtrsim h_{\rm min}$ and $\Hmax{\Fh}^{-1}\lesssim h_{\rm min}^{-1}$.
Hence, \eqref{est:lambda.kappa} leads to the bounds 
\[
\lambda_{\rm min}(\mat{A}_h)\gtrsim h_{\rm min}\,,\quad\lambda_{\rm max}(\mat{A}_h)\lesssim (k+1)^2h_{\rm min}^{-1}\,,\quad\kappa(\mat{A}_h)\lesssim  (k+1)^2 h_{\rm min}^{-2}.
\]
For quasi-uniform meshes, $h_{\rm min}$ can be replaced above by $h$ in these estimates.
However, on specific meshes (especially cut meshes with small cut elements), \eqref{est:lambda.kappa} can lead to much better estimates than those purely based on $h_{\rm min}$; see Section \ref{sec:numerical}.

Note that the factor $(k+1)^2$ appearing in \eqref{est:lambda.max} and \eqref{est:kappa} is due to the dependency on the polynomial degree of
the generic discrete trace inequality \eqref{eq:discrete.trace}.
\end{remark}

\begin{remark}[Small faces]
The estimates \eqref{est:lambda.kappa} are fully independent of the maximum number of faces in each element, or on their diameter, and are therefore fully robust with respect to small faces.
\end{remark}

\section{Proofs}\label{sec:proofs}

\subsection{Estimate on the eigenvalues}\label{sec:proof.eigs}

Let us start with two preliminary estimates. The proof of the following trace inequality, under Assumption \ref{assum:star.shaped} (and therefore with hidden constants in $\lesssim$ that are not impacted by the presence of small faces in $T$), can be found in \cite[Section 3]{cangiani.dong.ea:2017:discontinuous}.

\begin{lemma}[Trace Inequality]
	For all \(v \in \HONE(T)\), 
	\begin{equation}\label{eq:continuous.trace}
		\norm[\bdryT]{v}^2 \lesssim \hT^{-1} \Big(\norm[\T]{v}^2 + \hT^2\norm[\T]{\nabla v}^2\Big).
	\end{equation}
	For $v\in \POLY{\ell}(T)$, the following discrete trace inequality also holds:
	\begin{equation}\label{eq:discrete.trace}
		\norm[\bdryT]{v}^2 \lesssim \hT^{-1}(\ell+1)(\ell+d)\norm[\T]{v}^2.
	\end{equation}
\end{lemma}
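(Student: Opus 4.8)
The plan is to derive both inequalities from a single geometric device: the cone (pyramid) decomposition of $T$ furnished by Assumption \ref{assum:star.shaped}. Suppose first that $T$ is star-shaped with respect to a ball $B(\bmx_0,\varrho\hT)$. For each face $F\in\Fh[T]$ let $P_F\defeq\mathrm{conv}(\ol{F}\cup\{\bmx_0\})$ be the pyramid with base $F$ and apex $\bmx_0$. Two facts drive everything. First, the pyramids have pairwise disjoint interiors and $\bigcup_{F\in\Fh[T]}P_F=\ol{T}$, so that $\sum_{F\in\Fh[T]}\norm[P_F]{v}^2\le\norm[\T]{v}^2$ with no dependence on the number of faces. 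Second, since $B(\bmx_0,\varrho\hT)\subset T$ and $F\subset\bdryT$, the distance from $\bmx_0$ to the hyperplane $\mathrm{aff}(F)$ lies between $\varrho\hT$ and $\hT$. When Assumption \ref{assum:star.shaped} only guarantees that $T$ is a union of star-shaped pieces, I would apply the construction inside each piece; every boundary face of $T$ lies in the boundary of one such piece, the pieces are disjoint, and the summation proceeds verbatim.

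For the continuous inequality \eqref{eq:continuous.trace} I would use the Rellich-type identity on each pyramid with the radial field $\bmx-\bmx_0$. On every lateral facet of $P_F$ the segment from $\bmx_0$ to any of its points lies in that facet, so $(\bmx-\bmx_0)\cdot\bmn=0$ there (with $\bmn$ the outward normal of $\bdry P_F$), while on the base one has $(\bmx-\bmx_0)\cdot\norTF=\mathrm{dist}(\bmx_0,\mathrm{aff}(F))\ge\varrho\hT$. The divergence theorem then gives, first for smooth $v$ and then for $v\in\HONE(T)$ by density,
\[
\varrho\hT\,\norm[F]{v}^2\le\int_{\bdry P_F}v^2\,(\bmx-\bmx_0)\cdot\bmn=\int_{P_F}\DIV\!\big(v^2(\bmx-\bmx_0)\big)=\int_{P_F}\big(d\,v^2+2v\,\nabla v\cdot(\bmx-\bmx_0)\big).
\]
Bounding $|\bmx-\bmx_0|\le\hT$, applying Cauchy--Schwarz and Young's inequality, and summing over $F\in\Fh[T]$ via disjointness of the pyramids yields \eqref{eq:continuous.trace}, with the hidden constant depending only on $d$ and $\varrho$ and, in particular, not on the number or diameters of the faces.

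For the discrete inequality \eqref{eq:discrete.trace} the point is to obtain the \emph{sharp} degree factor $(\ell+1)(\ell+d)$; combining \eqref{eq:continuous.trace} with an inverse estimate would only give $\ell^4$ and is therefore too lossy. Instead I would invoke a face-wise form of the sharp simplicial trace inequality of Warburton--Hesthaven: on a $d$-simplex $S$ with a face $f$ and opposite vertex $a$, every $p\in\POLY{\ell}(S)$ satisfies $\norm[f]{p}^2\le\tfrac{(\ell+1)(\ell+d)}{d}\tfrac{|f|}{|S|}\norm[S]{p}^2$, where $\tfrac{|f|}{|S|}=d/\mathrm{dist}(a,\mathrm{aff}(f))$. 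To apply this on $P_F$, triangulate the base $F$ into $(d-1)$-simplices $\{F_i\}$ and set $S_i\defeq\mathrm{conv}(\ol{F_i}\cup\{\bmx_0\})$; these are disjoint, cover $P_F$, and each has apex-height $\mathrm{dist}(\bmx_0,\mathrm{aff}(F_i))=\mathrm{dist}(\bmx_0,\mathrm{aff}(F))\ge\varrho\hT$, hence $|F_i|/|S_i|\lesssim\hT^{-1}$ uniformly. Summing the simplicial bound over $i$, then over $F\in\Fh[T]$, and again exploiting disjointness gives \eqref{eq:discrete.trace}. Crucially, no shape-regularity of the $F_i$ enters — only the apex-height affects the constant — so small or geometrically complex faces are harmless.

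The main obstacle is precisely this last step: securing the quadratic-in-$\ell$ constant uniformly in the face geometry. The Warburton--Hesthaven bound is proved on a fixed reference simplex by expanding $p$ in an $\LTWO$-orthogonal (Koornwinder--Dubiner) basis and estimating the trace term by term; transporting it to the needle-like simplices $S_i$ (small base, height $\sim\varrho\hT$) must route entirely through the ratio $|f|/|S|=d/\mathrm{dist}(a,\mathrm{aff}(f))$, so that no hidden dependence on $|F_i|$ creeps in. This is exactly what keeps the estimate robust with respect to small faces, while the disjointness of the cone decomposition takes care of the independence from the number of faces automatically.
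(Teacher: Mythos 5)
Your proof is correct and is essentially the proof the paper itself relies on: the paper does not prove this lemma in-text but cites \cite[Section 3]{cangiani.dong.ea:2017:discontinuous}, where both bounds are obtained exactly by your route --- a pyramid decomposition of $T$ over its faces furnished by star-shapedness, the divergence theorem with the radial field $\bmx-\bmx_0$ (whose normal component vanishes on lateral facets and is $\ge\varrho\hT$ on the base) for \eqref{eq:continuous.trace}, and the sharp Warburton--Hesthaven simplicial trace inequality, whose constant $(\ell+1)(\ell+d)\,|f|/(d\,|S|)$ enters only through the apex height $\gtrsim\varrho\hT$ and hence is robust to small or numerous faces, for \eqref{eq:discrete.trace}. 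The one assertion you should soften is that the pieces in Assumption \ref{assum:star.shaped} are disjoint with faces lying in the boundary of a single piece: under \cite[Definition 1.41]{di-pietro.droniou:2020:hybrid} the star-shaped subsets may overlap and a face $F$ may be cut by several of them, so one must run the pyramid argument on the portions of $\partial T$ seen by each piece and account for the (\,$\varrho$-controlled) bookkeeping, as the cited reference does --- a repairable detail, not a flaw in the method.
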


\begin{lemma}[Poincar\'{e}--Wirtinger]
	For all \(v\in H^1(T)\) the following Poincar\'{e}--Wirtinger inequality holds: 
	\begin{equation}\label{eq:poincare}
			\norm[T]{v - \piTzr{0}v} \lesssim \hT \seminorm[\HS{1}(T)]{v}.
	\end{equation}
\end{lemma}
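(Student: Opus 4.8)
The plan is to recognise that $\piTzr{0}v$ is the $L^2(T)$-orthogonal projection of $v$ onto the constants, i.e.\ the mean value $\ol{v}_T:=|T|^{-1}\int_T v$, so that \eqref{eq:poincare} is exactly the Poincar\'e--Wirtinger inequality $\norm[T]{v-\ol{v}_T}\lesssim \hT\seminorm[\HS{1}(T)]{v}$. Since $\piTzr{0}v$ is the best $L^2(T)$-approximation of $v$ by a constant, it suffices to bound $\norm[T]{v-c}$ for one conveniently chosen constant $c$, and then replace $c$ by $\piTzr{0}v$ for free at the very end. The whole difficulty is that $T$ is a general polytope, possibly non-convex and carrying arbitrarily many small faces, so I cannot invoke a convex Poincar\'e constant directly. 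Here Assumption~\ref{assum:star.shaped} is the crucial input: it provides a decomposition $T=\bigcup_{i=1}^N S_i$ into a \emph{finite} number $N$ of subsets, each star-shaped with respect to a ball $B_i\subset S_i$ of radius $r_i\gtrsim\varrho\hT$, where both $N$ and the radii are controlled by $\varrho$ alone, with no reference whatsoever to the facets of $T$. This is precisely what will make the final constant insensitive to small faces.

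First I would prove the inequality on a single star-shaped piece $S=S_i$, with ball $B=B_i$ of radius $r\gtrsim\varrho\hT$ and $\ol{v}_B:=|B|^{-1}\int_B v$. Star-shapedness with respect to $B$ means $[z,x]\subset S$ for every $x\in S$ and $z\in B$, so writing $v(x)-v(z)=\int_0^1\nabla v(z+t(x-z))\cdot(x-z)\,dt$ and averaging over $z\in B$, a change of variables yields the weakly singular representation $|v(x)-\ol{v}_B|\lesssim\int_S |x-y|^{1-d}\,|\nabla v(y)|\,dy$, with hidden constant depending only on $d$ and on the aspect ratio $\hT/r\le\varrho^{-1}$. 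Since the kernel $|\cdot|^{1-d}$ restricted to a ball of radius $\hT$ has $L^1$-norm $\lesssim\hT$, the generalised Young inequality gives $\norm[S]{v-\ol{v}_B}\lesssim\hT\norm[S]{\nabla v}$, whence $\norm[S]{v-\ol{v}_S}\lesssim\hT\norm[S]{\nabla v}$ because $\ol{v}_S$ is the $L^2(S)$ best constant approximation.

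Next I would chain the pieces together. After relabelling, the decomposition can be ordered so the $S_i$ form a connected chain in which consecutive $S_i,S_{i+1}$ overlap in a set $O_i$ of measure $|O_i|\gtrsim\hT^d$ (again a quantitative consequence of $\varrho$-regularity). Comparing means through an overlap, $|\ol{v}_{S_i}-\ol{v}_{S_{i+1}}|\le |O_i|^{-1/2}\big(\norm[S_i]{v-\ol{v}_{S_i}}+\norm[S_{i+1}]{v-\ol{v}_{S_{i+1}}}\big)\lesssim|O_i|^{-1/2}\hT\norm[T]{\nabla v}$, so that $|T|\,|\ol{v}_{S_i}-\ol{v}_{S_{i+1}}|^2\lesssim\hT^2\norm[T]{\nabla v}^2$ using $|T|\lesssim\hT^d\lesssim|O_i|$. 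Fixing $c=\ol{v}_{S_1}$ and summing the at most $N-1\lesssim1$ telescoping gaps, I obtain $\norm[T]{v-\ol{v}_{S_1}}^2=\sum_i\norm[S_i\setminus\cup_{j<i}S_j]{v-\ol{v}_{S_1}}^2\lesssim\sum_i\big(\norm[S_i]{v-\ol{v}_{S_i}}^2+|S_i|\,|\ol{v}_{S_i}-\ol{v}_{S_1}|^2\big)\lesssim\hT^2\norm[T]{\nabla v}^2$, and replacing $\ol{v}_{S_1}$ by $\piTzr{0}v$ via best-approximation concludes.

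The main obstacle is the chaining argument together with the bookkeeping needed to keep the constant robust: everything hinges on extracting from Assumption~\ref{assum:star.shaped} that the number $N$ of star-shaped pieces, the inradii $r_i\gtrsim\varrho\hT$, and the overlap measures $|O_i|\gtrsim\hT^d$ depend only on $\varrho$ and $d$, and never on the number or diameter of the faces of $T$. The single-piece estimate is classical, but one must track carefully that its constant depends only on the aspect ratio $\hT/r\le\varrho^{-1}$; this uniformity, propagated through the boundedly many chaining steps, is exactly what renders \eqref{eq:poincare} fully robust with respect to small faces.
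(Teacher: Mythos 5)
Your proof is correct and is essentially the argument behind the paper's own one-line proof: the paper simply cites \cite[Remark 1.46]{di-pietro.droniou:2020:hybrid}, and what you reconstruct is precisely the classical Dupont--Scott-type justification of that result --- the Riesz-potential bound $|v(x)-\ol{v}_B|\lesssim\int_S|x-y|^{1-d}|\nabla v(y)|\,dy$ on each piece star-shaped with respect to a ball of radius $\gtrsim\varrho\hT$, giving $\norm[S]{v-\ol{v}_S}\lesssim\hT\norm[S]{\nabla v}$, followed by chaining of mean values through overlaps of measure $\gtrsim\varrho\hT^d$ and the best-approximation property of $\piTzr{0}$. The only point worth making explicit is that the quantitative data you invoke (boundedly many pieces, inradii $\gtrsim\varrho\hT$, overlap measures $\gtrsim\varrho\hT^d$, all independent of the number and size of faces) is exactly what \cite[Definition 1.41]{di-pietro.droniou:2020:hybrid} encodes, so your constants indeed depend only on $d$ and $\varrho$ as required.
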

\begin{proof}
	See \cite[Remark 1.46]{di-pietro.droniou:2020:hybrid}.
\end{proof}

\begin{lemma}[Discrete Poincar\'{e} inequality]
	For all $\ulvh\in\Uhklzr$ it holds that
	\begin{equation}\label{eq:discrete.poincare.faces}
	\sum_{T\in\Th}\hT\norm[\bdryT]{\vFT}^2 \lesssim 	\sum_{T\in\Th}\Brac{\norm[T]{\nabla\pT{k+1}\ulvT}^2 + \hT^{-1}\norm[\bdryT]{\deltaFT{k}\ulvT}^2},
	\end{equation}
	where the hidden constant depends on $d$, $\varrho$ and $\Omega$ but is independent of $l$, $k$ and $h$.
\end{lemma}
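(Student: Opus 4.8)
The plan is to reduce the statement, which concerns the face unknowns, to an $\LTWO(\Omega)$ bound on the broken potential reconstruction, and then to establish the latter by a duality (Aubin--Nitsche) argument. Write $S\defeq\sum_{T\in\Th}\big(\norm[T]{\nabla\pT{k+1}\ulvT}^2 + \hT^{-1}\norm[\bdryT]{\deltaFT{k}\ulvT}^2\big)$ for the right-hand side and let $w_h$ be the broken polynomial defined by $w_h|_T=\pT{k+1}\ulvT$ for all $T\in\Th$. Since $\vFT\in\POLY{k}(\Fh[T])$, the definition of $\deltaFT{k}$ gives $\vFT=\piFTzr{k}\pT{k+1}\ulvT-\deltaFT{k}\ulvT$; using that $L^2$-projections are contractions, the continuous trace inequality \eqref{eq:continuous.trace}, and $\hT\lesssim 1$ (diameters are bounded by that of $\Omega$), I would bound
\begin{equation*}
\sum_{T\in\Th}\hT\norm[\bdryT]{\vFT}^2\lesssim\sum_{T\in\Th}\Brac{\norm[T]{\pT{k+1}\ulvT}^2+\norm[T]{\nabla\pT{k+1}\ulvT}^2}+S\lesssim\norm[\Omega]{w_h}^2+S.
\end{equation*}
Hence it suffices to prove $\norm[\Omega]{w_h}^2\lesssim S$.

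For the latter, let $\phi\in\HONEzr(\Omega)$ solve $-\Delta\phi=w_h$, so that $\norm[\Omega]{\nabla\phi}\lesssim\norm[\Omega]{w_h}$ and, assuming $\HS{2}(\Omega)$-elliptic regularity, $\seminorm[\HS{2}(\Omega)]{\phi}\lesssim\norm[\Omega]{w_h}$. Testing against $w_h$ and integrating by parts on each element gives
\begin{equation*}
\norm[\Omega]{w_h}^2=\sum_{T\in\Th}\brac[T]{\nabla\pT{k+1}\ulvT,\nabla\phi}-\sum_{T\in\Th}\brac[\bdryT]{\pT{k+1}\ulvT,\nabla\phi\cdot\norT}.
\end{equation*}
Because $\nabla\phi\cdot\norT$ is single-valued across interior faces (with opposite signs from the two sides), while $\vFT=0$ on $F\in\Fhb$, the single-valued face unknowns can be inserted into the boundary sum without changing its value, turning it into $-\sum_{T\in\Th}\brac[\bdryT]{\pT{k+1}\ulvT-\vFT,\nabla\phi\cdot\norT}$.

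I would then split $\pT{k+1}\ulvT-\vFT=\deltaFT{k}\ulvT+(\mathrm{Id}-\piFTzr{k})\pT{k+1}\ulvT$. The projected part pairs with $\piFTzr{k}(\nabla\phi\cdot\norT)$ and is controlled by $\hT^{-1/2}\norm[\bdryT]{\deltaFT{k}\ulvT}$ against $\hT^{1/2}\norm[\bdryT]{\nabla\phi}$; for the high-order part, subtracting the (exactly reproduced) constant $\piTzr{0}\pT{k+1}\ulvT$ and combining \eqref{eq:continuous.trace} with the Poincar\'e--Wirtinger inequality \eqref{eq:poincare} yields the robust estimate $\norm[\bdryT]{(\mathrm{Id}-\piFTzr{k})\pT{k+1}\ulvT}\lesssim\hT^{1/2}\norm[T]{\nabla\pT{k+1}\ulvT}$. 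Applying the trace inequality to $\nabla\phi$ and then Cauchy--Schwarz over the elements, each contribution is controlled by $S^{1/2}$ times $\big(\sum_{T\in\Th}\hT\norm[\bdryT]{\nabla\phi}^2\big)^{1/2}\lesssim\norm[\Omega]{\nabla\phi}+\seminorm[\HS{2}(\Omega)]{\phi}\lesssim\norm[\Omega]{w_h}$. Together with the analogous bound on the bulk term, this gives $\norm[\Omega]{w_h}^2\lesssim S^{1/2}\norm[\Omega]{w_h}$, hence $\norm[\Omega]{w_h}^2\lesssim S$, as required.

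The main obstacle is the elliptic regularity step: on a general (non-convex) polytope only $\HS{1+s}(\Omega)$ regularity holds, so the trace estimate for $\nabla\phi$ must be replaced by a fractional version (or a regularity-free Brenner-type argument), which is why the hidden constant depends on $\Omega$. The point ensuring robustness is that the entire argument is scaled by the element diameter $\hT$ rather than the face diameter $\hF$, and the high-order trace bound involves neither the number of faces of $T$ nor their sizes; this keeps the constant independent of small faces as well as of $l$ and $k$.
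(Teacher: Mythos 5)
Your reduction of \eqref{eq:discrete.poincare.faces} to the bound $\norm[\Omega]{\ph{k+1}\ulvh}^2\lesssim S$ (with $S$ your notation for the right-hand side of \eqref{eq:discrete.poincare.faces}) is exactly the paper's first step, and your treatment of the boundary terms after element-wise integration by parts --- inserting the single-valued face unknowns, splitting off $\deltaFT{k}\ulvT$, and controlling the high-order remainder via the constant $\piTzr{0}\pT{k+1}\ulvT$, the trace inequality \eqref{eq:continuous.trace} and Poincar\'e--Wirtinger \eqref{eq:poincare} --- matches the paper's argument essentially line for line. The one genuine difference is the auxiliary field you test against, and that is where your proof has a gap: you take $\bmtau=\nabla\phi$ with $-\Delta\phi=\ph{k+1}\ulvh$, which forces you to invoke $\HS{2}(\Omega)$ elliptic regularity both to make sense of $\nabla\phi\cdot\norT$ as an $\LTWO(\bdryT)$ function (without which the insertion of the face unknowns into the boundary sum is not legitimate face by face) and to bound $\sum_{T\in\Th}\hT\norm[\bdryT]{\nabla\phi}^2$. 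The paper only assumes $\Omega$ is a polytope, not that it is convex, so this regularity is not available in general; you flag the obstacle yourself, but the fractional-regularity repair you gesture at is delicate (on a general Lipschitz polytope one may only have $\phi\in\HS{1+s}(\Omega)$ with $s$ close to the borderline value $1/2$ for $\LTWO$ normal traces) and is not carried out.

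The fix the paper uses is precisely the ``regularity-free'' route you allude to: instead of the gradient of a Poisson solution, take any $\bmtau\in\HONE(\Omega)^d$ with $-\nabla\cdot\bmtau=\ph{k+1}\ulvh$ and $\norm[\HONE(\Omega)^d]{\bmtau}\lesssim\norm[\Omega]{\ph{k+1}\ulvh}$, which exists by the surjectivity of the divergence operator $\nabla\cdot:\HONE(\Omega)^d\to\LTWO(\Omega)$ on any bounded connected Lipschitz domain, with no convexity assumption. With $\bmtau\in\HONE(\Omega)^d$ the normal traces lie in $\LTWO$ of each face, the single-valuedness cancellation is valid, and $\sum_{T\in\Th}\hT\norm[\bdryT]{\bmtau}^2\lesssim\norm[\HONE(\Omega)^d]{\bmtau}^2$ follows from \eqref{eq:continuous.trace} alone. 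Every other step of your argument then goes through unchanged, so with this single substitution your proposal becomes the paper's proof.
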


\begin{proof}
	By a triangle inequality it holds that
	\[
	\sum_{T\in\Th}\hT\norm[\bdryT]{\vFT}^2 \lesssim 	\sum_{T\in\Th}\hT\norm[\bdryT]{\piFTzr{k}\pT{k+1}\ulvT}^2 + \sum_{T\in\Th}\hT\norm[\bdryT]{\deltaFT{k}\ulvT}^2.
	\]
	The second term clearly satisfies the desired bound due to $\hT \le {\rm diam}(\Omega)^2 \hT^{-1}$. It holds by the boundedness of $\piFTzr{k}\pT{k+1}\ulvT$ and the continuous trace inequality \eqref{eq:continuous.trace} that
	\[
	\sum_{T\in\Th}\hT\norm[\bdryT]{\piFTzr{k}\pT{k+1}\ulvT}^2 \lesssim \sum_{T\in\Th}\Brac{\norm[T]{\pT{k+1}\ulvT}^2 + \hT^2\norm[T]{\nabla\pT{k+1}\ulvT}^2}.
	\]
	Thus it remains to prove that
	\[
	\norm[\Omega]{\ph{k+1}\ulvh}^2 \lesssim \sum_{T\in\Th}\Brac{\norm[T]{\nabla\pT{k+1}\ulvT}^2 + \hT^{-1}\norm[\bdryT]{\deltaFT{k}\ulvT}^2}.
	\]
	As the divergence operator \(\nabla\cdot:\HONE(\Omega)^d\to\LTWO(\Omega)\) is onto, there exists a \(\bmtau\in\HONE(\Omega)^d\) such that \(-\nabla\cdot\bmtau = \ph{k+1}\ulvh\) and \(\norm[H^1(\Omega)^d]{\bmtau} \lesssim \norm[\Omega]{\ph{k+1}\ulvh}\) \cite[Lemma 8.3]{di-pietro.droniou:2020:hybrid}. Therefore
	\begin{align*}
	\norm[\Omega]{\ph{k+1}\ulvh}^2 = -\brac[\Omega]{\ph{k+1}\ulvh,\nabla \cdot \bmtau} \eq \sum_{T\in\Th}\Brac{\brac[T]{\nabla\pT{k+1}\ulvT, \bmtau} - \brac[\bdryT]{\pT{k+1}\ulvT, \bmtau\cdot \norT}} \nl
	\eq \sum_{T\in\Th}\Brac{\brac[T]{\nabla\pT{k+1}\ulvT, \bmtau} + \brac[\bdryT]{\vFT - \pT{k+1}\ulvT, \bmtau\cdot\norT}},
	\end{align*}
	where we have invoked the homogeneous conditions on the space $\Uhklzr$ and the fact that $\bmtau\cdot\bmn_{TF}+\bmtau\cdot\bmn_{T'F}=0$ whenever $T,T'$ are the two elements on each side of an internal face $F\in\Fhi$. Thus, by the Cauchy--Schwarz inequality and continuous trace inequalities it holds that
	\begin{align*}
	\norm[\Omega]{\ph{k+1}\ulvh}^2 \les \sum_{T\in\Th}\norm[\HONE(T)]{\bmtau}\Brac{\norm[T]{\nabla\pT{k+1}\ulvT} + \hT^{-\frac12}\norm[\bdryT]{\vFT - \pT{k+1}\ulvT}} \nl
	\les \sum_{T\in\Th}\norm[\HONE(T)]{\bmtau}\Brac{\norm[T]{\nabla\pT{k+1}\ulvT} + \hT^{-\frac12}\norm[\bdryT]{\deltaFT{k}\ulvT}},
	\end{align*}
	where in the second line we have added and subtracted $\piFTzr{k}\pT{k+1}\ulvT$ to the boundary term, invoked the minimisation of $\piFTzr{k}$ and applied a continuous trace inequality. The proof follows from a discrete Cauchy--Schwarz inequality and the bound $\norm[\HONE(\Omega)^d]{\bmtau} \lesssim \norm[\Omega]{\ph{k+1}\ulvh}$.
\end{proof}

\begin{lemma}
	For all $\ul{v}_{T,\partial}=(0,\vFT)\in\UTkl$, it holds that
	\begin{equation}\label{eq:discrete.upper.bound}
			\norm[T]{\nabla\pT{k+1}\ul{v}_{T,\partial}}^2 + \hT^{-1}\norm[\bdryT]{\deltaFT{k}\ul{v}_{T,\partial}}^2 \lesssim (k+1)^2\hT^{-1}\norm[\bdryT]{\vFT}^2,
	\end{equation}
	where the hidden constant depends on $d$ and $\varrho$ but is independent of $l$, $k$ and $h$.
\end{lemma}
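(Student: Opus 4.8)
The plan is to bound the two terms on the left-hand side separately, writing $p:=\pT{k+1}\ul{v}_{T,\partial}$ throughout. Since the element component of $\ul{v}_{T,\partial}$ vanishes, the definition \eqref{eq:pT.def} collapses to $\brac[\T]{\nabla p,\nabla w}=\brac[\bdryT]{\vFT,\nabla w\cdot\norT}$ for all $w\in\POLY{k+1}(T)$, and the closure relation \eqref{eq:pT.closure} forces $\piTzr{0}p=0$; both facts will be used. For the gradient term I would take $w=p$ to obtain $\norm[T]{\nabla p}^2=\brac[\bdryT]{\vFT,\nabla p\cdot\norT}$, bound the right-hand side by Cauchy--Schwarz on $\bdryT$, and then apply the discrete trace inequality \eqref{eq:discrete.trace} to the degree-$k$ (vector) polynomial $\nabla p$, absorbing $(k+1)(k+d)\lesssim(k+1)^2$ into the hidden constant. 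Dividing through by $\norm[T]{\nabla p}$ then yields $\norm[T]{\nabla p}^2\lesssim(k+1)^2\hT^{-1}\norm[\bdryT]{\vFT}^2$, which is the desired bound on the first term.

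For the stabilisation term, I would use that $\vFT\in\POLY{k}(\Fh[T])$ is invariant under $\piFTzr{k}$, so that $\deltaFT{k}\ul{v}_{T,\partial}=\piFTzr{k}p-\vFT$. A triangle inequality and the $L^2(\bdryT)$-contractivity of $\piFTzr{k}$ reduce the estimate to controlling $\norm[\bdryT]{p}$ by $\norm[\bdryT]{\vFT}$. Here I would invoke the \emph{continuous} trace inequality \eqref{eq:continuous.trace} followed by Poincar\'e--Wirtinger \eqref{eq:poincare}: since $\piTzr{0}p=0$, the latter gives $\norm[T]{p}\lesssim\hT\norm[T]{\nabla p}$, and hence $\norm[\bdryT]{p}^2\lesssim\hT\norm[T]{\nabla p}^2$. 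Combining with the gradient bound from the first step produces $\norm[\bdryT]{p}\lesssim(k+1)\norm[\bdryT]{\vFT}$, so that $\hT^{-1}\norm[\bdryT]{\deltaFT{k}\ul{v}_{T,\partial}}^2\lesssim(k+1)^2\hT^{-1}\norm[\bdryT]{\vFT}^2$. Summing the two contributions gives the claim.

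The Cauchy--Schwarz steps and the projection contraction are routine; the point requiring care is the bookkeeping of the polynomial degree. The single factor $(k+1)^2$ in the final estimate must be extracted only once, through the discrete trace inequality applied to $\nabla p$. In particular, when bounding $\norm[\bdryT]{p}$ in the second term it is crucial to route through the \emph{continuous} trace inequality together with the mean-zero property, rather than through a second discrete trace inequality on $p$: the latter would introduce a spurious additional $(k+1)^2$ and degrade the bound to $(k+1)^4$. Finally, the robustness of the estimate with respect to small faces is inherited directly from the small-face-independent constants in \eqref{eq:continuous.trace}--\eqref{eq:discrete.trace}.
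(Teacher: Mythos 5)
Your proposal is correct and follows essentially the same route as the paper's proof: the gradient bound via testing \eqref{eq:pT.def} with $w=\pT{k+1}\ul{v}_{T,\partial}$ and the discrete trace inequality \eqref{eq:discrete.trace}, and the stabilisation term via the triangle inequality, $L^2$-boundedness of $\piFTzr{k}$, the continuous trace inequality \eqref{eq:continuous.trace} and Poincar\'e--Wirtinger \eqref{eq:poincare} using $\piTzr{0}\pT{k+1}\ul{v}_{T,\partial}=0$ from \eqref{eq:pT.closure}. The only difference is the order in which the two terms are treated, which is immaterial.
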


\begin{proof}
	By a triangle inequality, the boundedness of $\piFTzr{k}$, and the continuous trace inequality \eqref{eq:continuous.trace} it holds that
	\[
		\hT^{-1}\norm[\bdryT]{\deltaFT{k}\ul{v}_{T,\partial}}^2 \lesssim \hT^{-1}\norm[\bdryT]{\vFT}^2 + \hT^{-2}\norm[T]{\pT{k+1}\ul{v}_{T,\partial}}^2 + \norm[T]{\nabla\pT{k+1}\ul{v}_{T,\partial}}^2.
	\]
	As the element unknown is zero, it holds by \eqref{eq:pT.closure} that $\piTzr{0} \pT{k+1}\ul{v}_{T,\partial}=0$. Thus, we may apply Poincar\'{e}--Wirtinger inequality \eqref{eq:poincare} to yield $\hT^{-2}\norm[T]{\pT{k+1}\ul{v}_{T,\partial}}^2\lesssim \norm[T]{\nabla\pT{k+1}\ul{v}_{T,\partial}}^2$. Hence, it remains to be proven that
	\begin{equation}\label{eq:pT.bdry}
		\norm[T]{\nabla\pT{k+1}\ul{v}_{T,\partial}}^2 \lesssim (k+1)^2\hT^{-1}\norm[\bdryT]{\vFT}^2.
	\end{equation}
	It follows from equation \eqref{eq:pT.def} with $w=\pT{k+1}\ul{v}_{T,\partial}$ that
	\[
		\norm[T]{\nabla\pT{k+1}\ul{v}_{T,\partial}}^2 = \brac[\bdryT]{\vFT, \nabla \pT{k+1}\ul{v}_{T,\partial}\cdot\norT}.
	\]
	Applying the discrete trace inequality \eqref{eq:discrete.trace} and $(k+1)(k+d)\lesssim (k+1)^2$ yields 
	\[
		\norm[T]{\nabla\pT{k+1}\ul{v}_{T,\partial}}^2 \lesssim \hT^{-\frac12}(k+1)\norm[\bdryT]{\vFT}\norm[T]{\nabla \pT{k+1}\ul{v}_{T,\partial}}.
	\]
	Simplifying by $\norm[T]{\nabla \pT{k+1}\ul{v}_{T,\partial}}$ and squaring yields the desired result \eqref{eq:pT.bdry}.
\end{proof}

We can now prove the estimates \eqref{est:lambda.kappa} on the eigenvalues and condition number of $\mat{A}_h$.

\begin{proof}[Proof of Theorem \ref{th:estimates}]
	We note that
	\begin{align*}
		\aT((\calS_T\uFT, \uFT) , (0, \uFT)) ={}& \aT((\calS_T\uFT, \uFT) , (\calS_T\uFT, \uFT)) - \cancel{\aT((\calS_T\uFT, \uFT) , (\calS_T\uFT, 0))}
	\end{align*}
	where the cancellation follows setting \(\vT = \calS_T\uFT\) in \eqref{eq:ST.def}. By equations \eqref{eq:discrete.poincare.faces} and \eqref{eq:aT.lower.bound}, and recalling the definition \eqref{def:Ah.sc} of $\Ah$, it thus holds that
	\begin{equation}\label{eq:Ah.lower.bound}
		\sum_{T\in\Th}\hT\norm[\bdryT]{\uFT}^2 \lesssim \Ah(\uFh, \uFh).
	\end{equation}
	Consider also
	\begin{align*}
		\aT((\calS_T\uFT, \uFT) , (0, \uFT)) \eq \aT((0, \uFT) , (0, \uFT)) + \aT((\calS_T\uFT, 0) , (0, \uFT)) \nl
		\eq \aT((0, \uFT) , (0, \uFT)) - \aT((\calS_T\uFT, 0) , (\calS_T\uFT, 0)), \nl
		\lea \aT((0, \uFT) , (0, \uFT))
	\end{align*}
	where the second line follows from equation \eqref{eq:ST.def} with $v_T=\calS_T\uFT$ and the symmetry of \(\aT\), and the conclusion from the fact that $\aT$ is semi-definite positive. Therefore, by equations \eqref{eq:aT.faces.upper.bound} and \eqref{eq:discrete.upper.bound},
	\begin{equation}\label{eq:Ah.upper.bound}
		\aT((\calS_T\uFT, \uFT) , (0, \uFT)) \lesssim  (k+1)^2\hT^{-1}\norm[\bdryT]{\uFT}^2.
	\end{equation}
	Thus, combining \eqref{eq:Ah.lower.bound} and \eqref{eq:Ah.upper.bound} it holds that
	\[
		\sum_{T\in\Th}\hT\norm[\bdryT]{\uFT}^2 \lesssim \Ah(\uFh, \uFh) \lesssim (k+1)^2\sum_{T\in\Th}\hT^{-1}\norm[\bdryT]{\uFT}^2.
	\]
	Gathering by faces (and recalling that $u_{\Fh}$ vanishes on boundary faces), we obtain
	\begin{equation}\label{final.before.orthonormal}
		\sum_{F\in\Fhi}(h_{T_F^+}+h_{T_F^-})\norm[F]{u_F}^2 \lesssim \Ah(\uFh, \uFh) \lesssim (k+1)^2\sum_{F\in\Fhi}(h_{T_F^+}^{-1}+h_{T_F^-}^{-1})\norm[F]{u_F}^2.
	\end{equation}
	Having chosen orthonormal bases on the space $\POLY{k}(F)$, and recalling the definitions of $\Hmin{\Fh}$ and $\Hmax{\Fh}$, this relation reduces to
	\begin{equation}\label{final.after.orthonormal}
	\Hmin{\Fh}\bmU\cdot\bmU \lesssim \mat{A}_h\bmU\cdot\bmU\lesssim (k+1)^2\Hmax{\Fh}^{-1}\bmU\cdot\bmU.
	\end{equation}
	The estimates \eqref{est:lambda.kappa} classically follow from these bounds.
\end{proof}

\begin{remark}[Non-orthonormal polynomial bases]
The choice of orthonormal bases allows us, in the proof above, to substitute each $\norm[F]{u_F}^2$ in \eqref{final.before.orthonormal} with the Euclidean norm of the coefficients of $u_F$ on the basis of $\POLY{k}(F)$, thus leading to the global expressions $\Hmin{\Fh}\bmU\cdot\bmU$ and $\Hmax{\Fh}^{-1}\bmU\cdot\bmU$ in \eqref{final.after.orthonormal}.
If non-orthonormal polynomial bases are chosen in some $\POLY{k}(F)$, the proof shows that $\Hmin{\Fh}$ and $\Hmax{\Fh}$ have to be adjusted
the following way: for each $F$, letting $c_F,C_F$ be positive constants such that, for the chosen basis $(q^F_i)_{i\in I_F}$ of $\POLY{k}(F)$, we have
\[
c_F\sum_{i\in I_F} \lambda_i^2 \le \norm[F]{\sum_{i\in I_F}\lambda_iq^F_i}^2\le C_F\sum_{i\in I_F} \lambda_i^2\qquad\forall (\lambda_i)_{i\in I_F}\in\R,
\]
we set
\[
\Hmin{\Fh}=\min_{F\in\Fh}c_F\left(h_{T_F^+}+h_{T_F^-}\right)\,,\quad\Hmax{\Fh}^{-1}=\max_{F\in\Fh}C_F\left(h_{T_F^+}^{-1}+h_{T_F^-}^{-1}\right).
\]
It should be noted that $c_F$ and $C_F$ might depend, for some choice of polynomial bases, on the face geometry and its size. In this case, the resulting estimates on the eigenvalues and condition number may not be robust with respect to small faces in the mesh, on the contrary to those obtained using orthonormal bases (the importance, for meshes containing distorted elements, of using orthonormal bases over, say, monomial bases was already noticed in \cite[Section B.1]{di-pietro.droniou:2020:hybrid}).
\end{remark}

\subsection{Analysis of the stabilisation}\label{sec:analysis.stab}

We prove here the validity of the stabilisation term $\sT$ defined by \eqref{eq:stab.def}, and provide a brief discussion of alternate choices of stabilisation bilinear form. As the coercivity and boundedness \eqref{eq:norm.equivalence}, and polynomial consistency \eqref{eq:polynomial.consistency} are well established for the stabilisations considered here, we only wish to show that Assumption \ref{assum:aT} holds true.

\begin{lemma}
	The stabilisation bilinear form defined by \eqref{eq:stab.def} satisfies Assumption \ref{assum:aT}.
\end{lemma}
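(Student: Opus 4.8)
The plan is to verify the two inequalities of Assumption \ref{assum:aT} directly from the explicit form of $\aT(\ulvT,\ulvT)$. Substituting the stabilisation \eqref{eq:stab.def} into the definition of $\aT$, for every $\ulvT\in\UTkl$ one has
\[
\aT(\ulvT,\ulvT) = \norm[T]{\nabla\pT{k+1}\ulvT}^2 + \hT^{-2}\norm[T]{\deltaT{l}\ulvT}^2 + \hT^{-1}\norm[\bdryT]{\deltaFT{k}\ulvT}^2,
\]
which exhibits $\aT(\ulvT,\ulvT)$ as a sum of three non-negative terms. The lower bound \eqref{eq:aT.lower.bound} is then immediate: its left-hand side is exactly the first and third of these three terms, so dropping the middle term $\hT^{-2}\norm[T]{\deltaT{l}\ulvT}^2\ge 0$ yields \eqref{eq:aT.lower.bound} with hidden constant equal to $1$.

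For the upper bound \eqref{eq:aT.faces.upper.bound}, I would compare the same expansion, now specialised to $\ul{v}_{T,\partial}=(0,\vFT)$, with its right-hand side. The two differ only by the element-difference term $\hT^{-2}\norm[T]{\deltaT{l}\ul{v}_{T,\partial}}^2$, so it suffices to absorb this single term into $\norm[T]{\nabla\pT{k+1}\ul{v}_{T,\partial}}^2$. Since the element unknown vanishes, $\deltaT{l}\ul{v}_{T,\partial}=\piTzr{l}\pT{k+1}\ul{v}_{T,\partial}$, and the $L^2(T)$-contractivity of the projector $\piTzr{l}$ gives $\norm[T]{\deltaT{l}\ul{v}_{T,\partial}}\le\norm[T]{\pT{k+1}\ul{v}_{T,\partial}}$.

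Next I would invoke the closure condition \eqref{eq:pT.closure} with $\vT=0$, which gives $\piTzr{0}\pT{k+1}\ul{v}_{T,\partial}=0$, and then apply the Poincar\'e--Wirtinger inequality \eqref{eq:poincare} to $v=\pT{k+1}\ul{v}_{T,\partial}$ to obtain $\norm[T]{\pT{k+1}\ul{v}_{T,\partial}}\lesssim\hT\norm[T]{\nabla\pT{k+1}\ul{v}_{T,\partial}}$. Chaining the last two estimates gives $\hT^{-2}\norm[T]{\deltaT{l}\ul{v}_{T,\partial}}^2\lesssim\norm[T]{\nabla\pT{k+1}\ul{v}_{T,\partial}}^2$, and substituting this back into the expansion establishes \eqref{eq:aT.faces.upper.bound}. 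This is exactly the Poincar\'e--Wirtinger argument already used in the proof of \eqref{eq:discrete.upper.bound}.

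There is no serious obstacle here: the stabilisation \eqref{eq:stab.def} is chosen precisely so that its two constituent terms reproduce, up to the extra element term, the two quantities appearing in Assumption \ref{assum:aT}. The only point requiring a genuine estimate is the control of the element-difference contribution by the reconstructed-gradient norm, which is a one-line consequence of the zero-average property \eqref{eq:pT.closure} and \eqref{eq:poincare}. All hidden constants arise only from the Poincar\'e--Wirtinger inequality and the projector contractivity, hence depend solely on $d$ and $\varrho$ and are independent of $l$, $k$ and $h$, as required.
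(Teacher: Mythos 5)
Your proof is correct and follows essentially the same route as the paper's: expand $\aT(\ulvT,\ulvT)$ as the sum of the three non-negative terms, drop the element-difference term for the lower bound, and for the upper bound control $\hT^{-2}\norm[T]{\deltaT{l}\ul{v}_{T,\partial}}^2$ via the $L^2$-contractivity of $\piTzr{l}$, the zero-average property \eqref{eq:pT.closure}, and the Poincar\'e--Wirtinger inequality \eqref{eq:poincare}. No gaps; the constant-tracking remark at the end is also consistent with what the paper claims.
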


\begin{proof}
	The lower bound \eqref{eq:aT.lower.bound} follows trivially by noting that for all $\ulvT\in\UTkl$, we have
	\[
		\aT(\ulvT, \ulvT) = \norm[T]{\nabla\pT{k+1}\ulvT}^2 + \hT^{-2}\norm[T]{\deltaT{l}\ulvT}^2 +  \hT^{-1}\norm[\bdryT]{\deltaFT{k}\ulvT}^2.
	\]
	To prove the bound \eqref{eq:aT.faces.upper.bound} it remains to show that, for all $\ul{v}_{T,\partial}=(0,\vFT)\in\UTkl$,
	\[
		\hT^{-2}\norm[T]{\deltaT{l}\ul{v}_{T,\partial}}^2 \lesssim \norm[T]{\nabla\pT{k+1}\ul{v}_{T,\partial}}^2 +  \hT^{-1}\norm[\bdryT]{\deltaFT{k}\ul{v}_{T,\partial}}^2.
	\]
	We invoke the boundedness of $\piTzr{l}$ and the Poincar\'{e}--Wirtinger \eqref{eq:poincare} inequality, valid since $\piTzr{0}\pT{k+1}\ul{v}_{T,\partial}=0$ by \eqref{eq:pT.closure}, to see that
	\begin{equation*}
		\hT^{-2}\norm[T]{\deltaT{l}\ul{v}_{T,\partial}}^2 = \hT^{-2}\norm[T]{\piTzr{l}\pT{k+1}\ul{v}_{T,\partial}}^2 \le \hT^{-2}\norm[T]{\pT{k+1}\ul{v}_{T,\partial}}^2 \lesssim \norm[T]{\nabla\pT{k+1}\ul{v}_{T,\partial}}^2,
	\end{equation*}
	thus, completing the proof.
\end{proof}

\subsubsection{Alternate choices for the stabilisation bilinear form}

We briefly comment here on a variety of different choices for the stabilisation term $\sT$. For the choice of element polynomial degree $l=k-1$, the stabilisation bilinear form $\sTkminus:\UT{k-1,k}\times\UT{k-1,k}\to\R$ defined for all $\ulvT,\ulwT\in\UT{k-1,k}$ via
\[
	\sTkminus(\ulvT, \ulwT) \defeq \hT^{-1}\brac[\bdryT]{\deltaFT{k}\ulvT, \deltaFT{k}\ulwT}
\]
satisfies the requirements \eqref{eq:norm.equivalence} and \eqref{eq:polynomial.consistency} \cite[Section 4.3]{droniou.yemm:2021:robust}. Moreover, it is clear that $\sTkminus$ satisfies Assumption \ref{assum:aT} with hidden constant in \eqref{eq:aT.lower.bound} and \eqref{eq:aT.faces.upper.bound} equal to $1$. We emphasise, however, that  when $l>k-1$ the coercivity \eqref{eq:norm.equivalence} of $\ah$ fails for this choice of stabilisation, and that the discrete problem \eqref{eq:discrete.problem} is ill posed.

Another choice of stabilisation with only boundary terms is the ``original HHO stabilisation'' $\sTbdry:\UT{l,k}\times\UT{l,k}\to\R$ defined for all $\ulvT,\ulwT\in\UT{l,k}$ via
\begin{equation}\label{eq:bdry.stab.def}
	\sTbdry(\ulvT, \ulwT) \defeq \hT^{-1}\brac[\bdryT]{(\deltaFT{k} - \deltaT{l})\ulvT, (\deltaFT{k} - \deltaT{l})\ulwT}.
\end{equation}
It satisfies the coercivity and boundedness requirements \eqref{eq:norm.equivalence} for all $l\le k+1$ \cite[Proposition 2.13]{di-pietro.droniou:2020:hybrid}. This choice of  stabilisation also satisfies the upper bound \eqref{eq:aT.faces.upper.bound} in Assumption \ref{assum:aT}, however, we have been yet unable to prove the lower bound \eqref{eq:aT.lower.bound} with a constant that does not depend on $k$.

\begin{remark}[HDG stabilisation]
In the case $l=k+1$, the following \ac{hdg}-inspired stabilisation can also be considered (see \cite[Section 5.1.6]{di-pietro.droniou:2020:hybrid} and \cite{Cockburn.Di-Pietro.ea:16}):
\[
\mathrm{s}_T^{\textsc{hdg}}(\ulvT, \ulwT) = \hT^{-1}\brac[\bdryT]{\piFTzr{k}(\vFT-\vT), \piFTzr{k}(\wFT-\wT)}.
\]
As for $\sTbdry$ above, we can prove a uniform-in-$k$ upper bound \eqref{eq:aT.faces.upper.bound} for $\mathrm{s}_T^{\textsc{hdg}}$, but the lower bounds we could establish depend on $k$.
\end{remark}

The gradient-based stabilisation $\sTgrad:\UT{l,k}\times\UT{l,k}\to\R$ introduced in \cite[Section 4]{droniou.yemm:2021:robust} is defined for all $\ulvT,\ulwT\in\UT{l,k}$ via
\begin{equation}\label{eq:grad.stab.def}
	\sTgrad(\ulvT, \ulwT) \defeq \brac[T]{\nabla \deltaT{l}\ulvT, \nabla \deltaT{l}\ulwT} + \hT^{-1}\brac[\bdryT]{\deltaFT{k}\ulvT, \deltaFT{k}\ulwT}.
\end{equation}
The gradient-based stabilisation satisfies coercivity, boundedness, and polynomial consistency for all $l\ge k-1$. Moreover, it is clear that $\sTgrad$ satisfies equation \eqref{eq:aT.lower.bound} in Assumption \ref{assum:aT}. For $l\ge k+1$, the upper bound \eqref{eq:aT.faces.upper.bound} also follows trivially. However, for $l=k-1,k$ we have been unable to prove that this choice of stabilisation satisfies \eqref{eq:aT.faces.upper.bound} without an extra dependency on $k$.

Despite these shortcomings in the analysis, numerical tests suggest that $\sTbdry$ and $\sTgrad$ satisfy the eigenvalue estimates stated in Theorem \ref{th:estimates}. This is illustrated in Figure \ref{fig:k.test}. Moreover, these choices of stabilisation might be preferable as the error induced when measured in certain norms can be significantly lower than for the choice \eqref{eq:stab.def} \cite[Figures 2, 3]{droniou.yemm:2021:robust}.

\section{HHO on cut meshes}\label{sec:unfitted}
 
As discussed in the introduction, the generation of unstructured body-fitted meshes of geometrically complex regions -- such as those with curved boundaries and high curvatures -- can present great difficulties. Unfitted finite element methods avoid this issue because they are defined on a simple (e.g., Cartesian or octree) background mesh covering the domain of interest. The elements in touch with interface boundaries can be locally cut to produce polytopal elements on the physical domain boundaries \cite{2110.01378}. These cuts can produce narrow, anisotropic `sliver-cut' elements, as well as small but round `small-cut' elements.  

The design of a variant of the HHO method on cut meshes, with potentially curved elements, is presented and analysed in \cite{burman.cicuttin.ea:2021:unfitted} for elliptic interface problems. The unfitted \ac{hho} method therein makes use of Nitsche's method for the local reconstruction operator. Instead, we consider a standard \ac{hho} method on cut meshes. In particular, we define a simple structured background mesh $\mathcal{T}_{h}^{\mathrm{bg}}$ and extract the submesh of active elements $\mathcal{T}_{h}^{\mathrm{act}}$. The active mesh is split into interior elements $\mathcal{T}_{h}^{\mathrm{in}}$ and cut elements $\mathcal{T}_{h}^{\mathrm{cut}}$. 

Based on the condition number bounds in Theorem \ref{th:estimates}, we know that the conditioning of the system matrix can be severely affected by the presence of small-cut and sliver-cut elements. To attain condition number bounds on cut meshes that are independent of the cut location, sliver-cut and small-cut elements in $\mathcal{T}_{h}^{\mathrm{cut}}$ are aggregated to their neighbours to form an isotropic, quasi-uniform mesh. In particular, we iterate over elements $T \in \mathcal{T}_{h}^{\mathrm{cut}}$ and merge $T$ with its neighbour sharing the longest edge (or face) if
\[
	\frac{|T|_d}{|\bdryT|_{d-1}} < \epsilon_1 \hT \qquad\textrm{or}\qquad \hT < \epsilon_2 \hmax.
\] 
The algorithm is re-run until no ill-posed elements are found. The convergence of this algorithm is assured, since any ill-posed cell is at finite distance to a well-posed cell. The size of the aggregates is bounded by the maximum of such distance for all ill-cells, which depends on the scale of the geometrical features (see \cite[Lemma 2.2]{badia.verdugo.ea:2018:aggregated}). We take $\epsilon_1 = 0.05$ and $\epsilon_2 = 0.3$ in the numerical experiments section. After this aggregation step, we end up with a new mesh $\mathcal{T}_{h}^{\mathrm{ag}}$.  Let us note that arbitrarily small faces can still be present in $\mathcal{T}_{h}^{\mathrm{ag}}$. The following corollary is a direct consequence of Theorem~\ref{th:estimates} and the aggregation algorithm.

\begin{corollary}[Eigenvalues and condition numbers on cut meshes] 
Let $\mathcal{T}_{h}^{\mathrm{bg}}$ be a background mesh covering $\Omega$ with characteristic mesh size $h$ and $\mathcal{T}_{h}^{\mathrm{ag}}$ the corresponding aggregated mesh obtained using the algorithm described above. Let $\mat{A}_h$ be the linear system matrix
corresponding to the \ac{hho} discretisation (\ref{hho:statically.condensed}) for $\mathcal{T}_{h}^{\mathrm{ag}}$. Under the assumptions in Theorem~\ref{th:estimates}, it holds:
\[
\lambda_{\rm min}(\mat{A}_h)\gtrsim h, \qquad 
\lambda_{\rm max}(\mat{A}_h)\lesssim (k+1)^2 h^{-1}, \qquad 
\kappa(\mat{A}_h)\lesssim (k+1)^2 h^{-2},
\]
where the constants are independent of the cut location but depend on the choice of $\epsilon_1$ and $\epsilon_2$.  
\end{corollary}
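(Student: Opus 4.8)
The plan is to deduce the corollary directly from Theorem~\ref{th:estimates} by showing that the aggregation algorithm renders $\mathcal{T}_h^{\mathrm{ag}}$ quasi-uniform, with every element diameter comparable to the background mesh size $h$. Once $\hT\approx h$ is established for all $T\in\mathcal{T}_h^{\mathrm{ag}}$, the characteristic lengths entering Theorem~\ref{th:estimates} collapse to $\Hmin{\Fh}\approx h$ and $\Hmax{\Fh}^{-1}\approx h^{-1}$, and substituting these into \eqref{est:lambda.kappa} yields exactly the stated bounds. Crucially, $\Hmin{\Fh}$ and $\Hmax{\Fh}^{-1}$ depend only on the \emph{element} diameters $h_{T_F^+},h_{T_F^-}$ and not on face diameters, so the small faces that remain in $\mathcal{T}_h^{\mathrm{ag}}$ play no role; this is precisely the robustness already encoded in Theorem~\ref{th:estimates} and its accompanying remark on small faces.

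First I would establish the two-sided estimate $\hT\approx h$ on the aggregated mesh. The background mesh is structured with characteristic size $h$, so each background cell has diameter $\approx h$ and in particular $\hmax\approx h$. Upon termination of the algorithm, no element of $\mathcal{T}_h^{\mathrm{ag}}$ satisfies the second merging criterion, whence $\hT\ge\epsilon_2\hmax\gtrsim h$, giving the lower bound. For the upper bound, each aggregate is the union of a uniformly bounded number of background cells — the bound following from the finite-distance convergence argument recalled after the algorithm (cf.~\cite[Lemma 2.2]{badia.verdugo.ea:2018:aggregated}) — each of diameter $\lesssim h$, so that $\hT\lesssim h$ as well.

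Next I would verify that $\mathcal{T}_h^{\mathrm{ag}}$ meets the hypotheses of Theorem~\ref{th:estimates}. Because neither merging criterion is met, each element satisfies the ratio bound $|T|_d/|\bdryT|_{d-1}\ge\epsilon_1\hT$, which excludes sliver-cut anisotropy; combined with the bounded aggregate size this yields star-shapedness of the aggregated elements with a parameter $\varrho$ depending only on $\epsilon_1,\epsilon_2$ and $d$, so Assumption~\ref{assum:star.shaped} holds. Using $\hT\approx h$ in the definitions of the characteristic lengths, for every internal face $F\in\Fhi$ we have $h_{T_F^+}+h_{T_F^-}\approx h$ and $h_{T_F^+}^{-1}+h_{T_F^-}^{-1}\approx h^{-1}$, hence $\Hmin{\Fh}\approx h$ and $\Hmax{\Fh}^{-1}\approx h^{-1}$. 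Inserting these into \eqref{est:lambda.min}, \eqref{est:lambda.max} and \eqref{est:kappa} closes the argument, with constants depending on $\epsilon_1,\epsilon_2$ through $\varrho$.

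I expect the only genuine obstacle to be the verification of Assumption~\ref{assum:star.shaped} for the aggregated mesh: one must argue that merging a bounded number of isotropic, size-$h$ background cells across shared faces produces polytopes that remain connected by star-shaped sets with a uniform parameter. This is exactly what the two aggregation criteria are designed to guarantee — the ratio test excludes slivers and the size test excludes vanishingly small cells — but making the dependence of $\varrho$ on $\epsilon_1,\epsilon_2$ explicit is the one step requiring care; the remaining algebra reducing $\Hmin{\Fh}$ and $\Hmax{\Fh}$ to powers of $h$ is routine.
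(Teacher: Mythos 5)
Your proposal is correct and follows essentially the same route as the paper, which presents this corollary as a direct consequence of Theorem~\ref{th:estimates} once the aggregation algorithm yields a quasi-uniform, isotropic mesh with $\hT\approx h$ (so that $\Hmin{\Fh}\approx h$ and $\Hmax{\Fh}^{-1}\approx h^{-1}$). You usefully make explicit the two steps the paper leaves implicit --- the two-sided bound $\hT\approx h$ from the termination criteria and the bounded aggregate size, and the verification of Assumption~\ref{assum:star.shaped} with $\varrho$ depending on $\epsilon_1,\epsilon_2$ --- but these are exactly the ingredients the paper's one-line justification relies on.
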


We note that the ill-conditioning of systems arising in unfitted $C^0$-Lagrangian \acp{fe} can be solved by aggregating ill-conditioned elements into their neighbours \cite{badia.verdugo.ea:2018:aggregated}. However, the strategy we consider here is simpler because there is no need to eliminate ill-posed nodes via constraints in each aggregate.

\section{Numerical Results}\label{sec:numerical}
We provide here a numerical study of the condition number to illustrate the results derived in previous sections. The linear system \eqref{hho:statically.condensed} is assembled using the \texttt{HArDCore} open source C++ library \cite{hhocode}. We compute the condition number using the \texttt{SymEigsSolver} solver found in the \texttt{Spectra} library, with documentation available at \url{https://spectralib.org/doc/index.html}.
All numerical tests in this section are performed using element degree $l=k$, and $L^2$- orthonormalised basis functions. The orthonormalisation process is achieved using a classical Gram-Schmidt algorithm.

\subsection{Coarsened meshes}

In order to capture intricate geometric details in a given domain it is sometimes sensible to start with a regular, fine mesh of small element diameter, and agglomerate elements together in order to save computation time. These coarsened meshes are (relatively) isotropic and quasi-uniform, however can have many faces per element and arbitrarily small face diameters. Thus, Theorem \ref{th:estimates} predicts the maximum and minimum eigenvalues to scale as $\lambda_{\rm{min}}(\mat{A}_h)\approx h$ and $\lambda_{\rm{max}}(\mat{A}_h)\approx h^{-1}$ respectively, independently of the number and size of faces in each element. We consider the unit box $\Omega=(0,1)^2\subset\R^2$, and a fine triangular mesh of $\Omega$. We then design  successive coarsenings of these meshes and observe how the condition number evolves. Such meshes are plotted in Figure \ref{fig:coarse.mesh} with the data of the mesh sequence presented in Table \ref{table:coarse.mesh}.

\begin{figure}[H]
	\centering
	\includegraphics[width=3\textwidth/10]{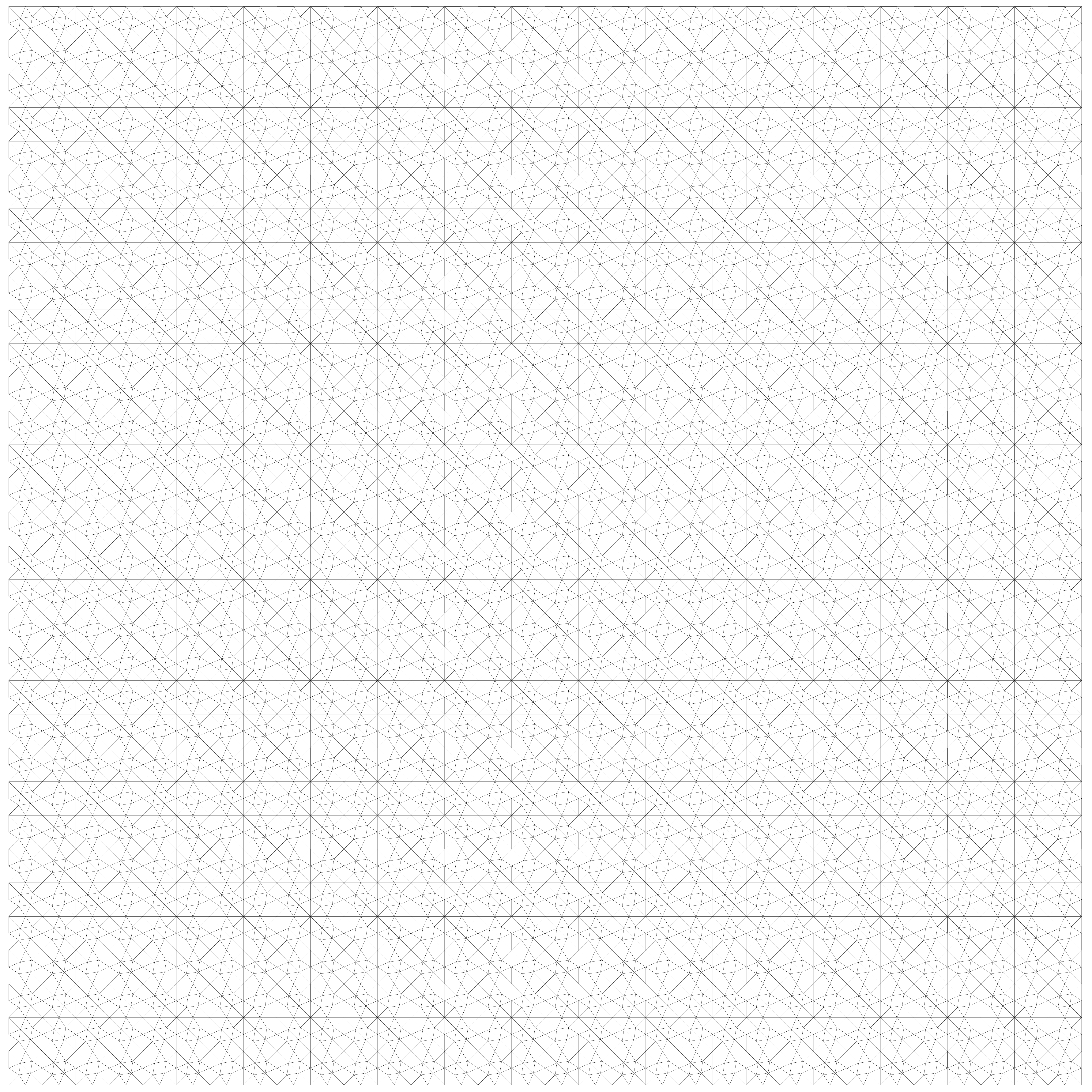} 
	\includegraphics[width=3\textwidth/10]{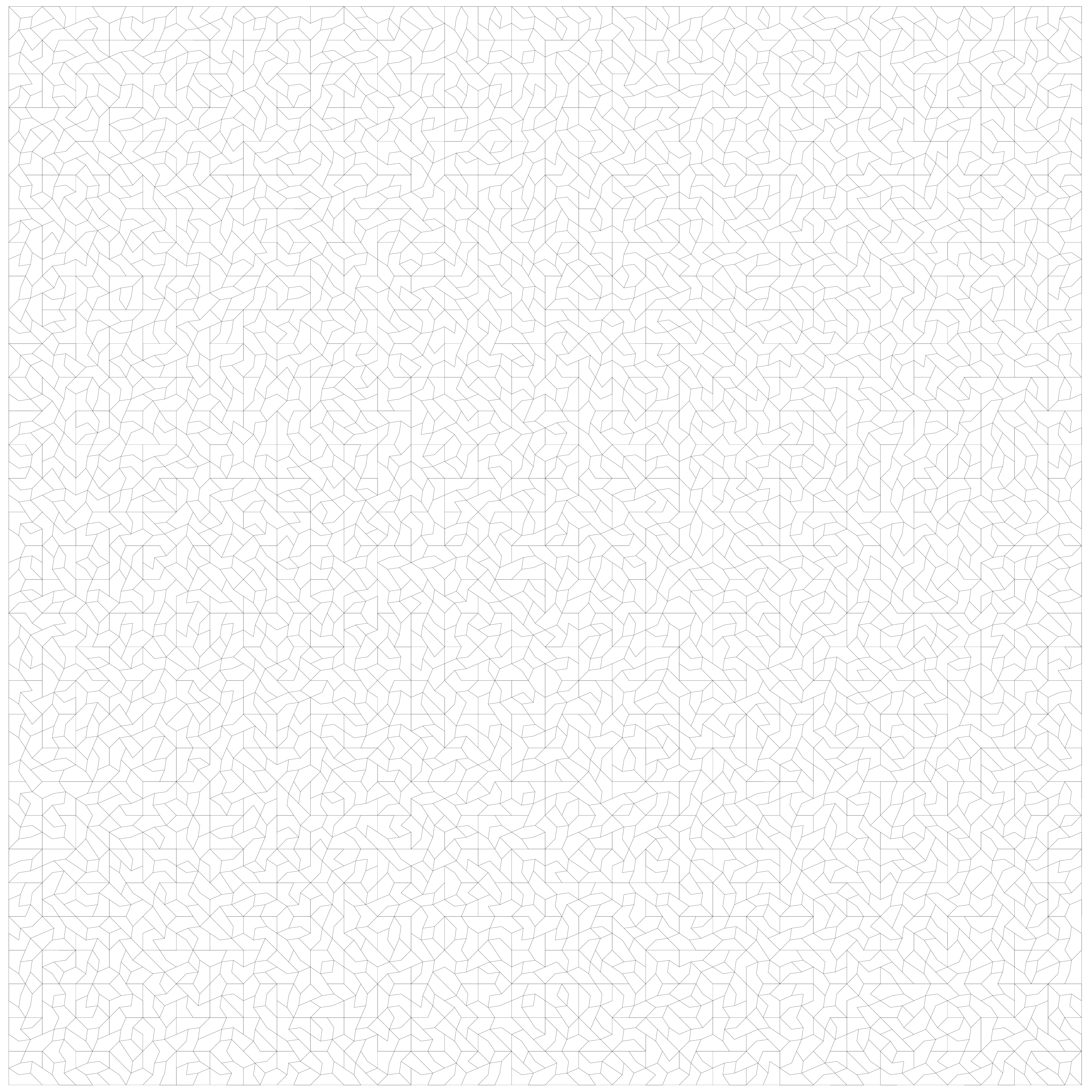} 
	\includegraphics[width=3\textwidth/10]{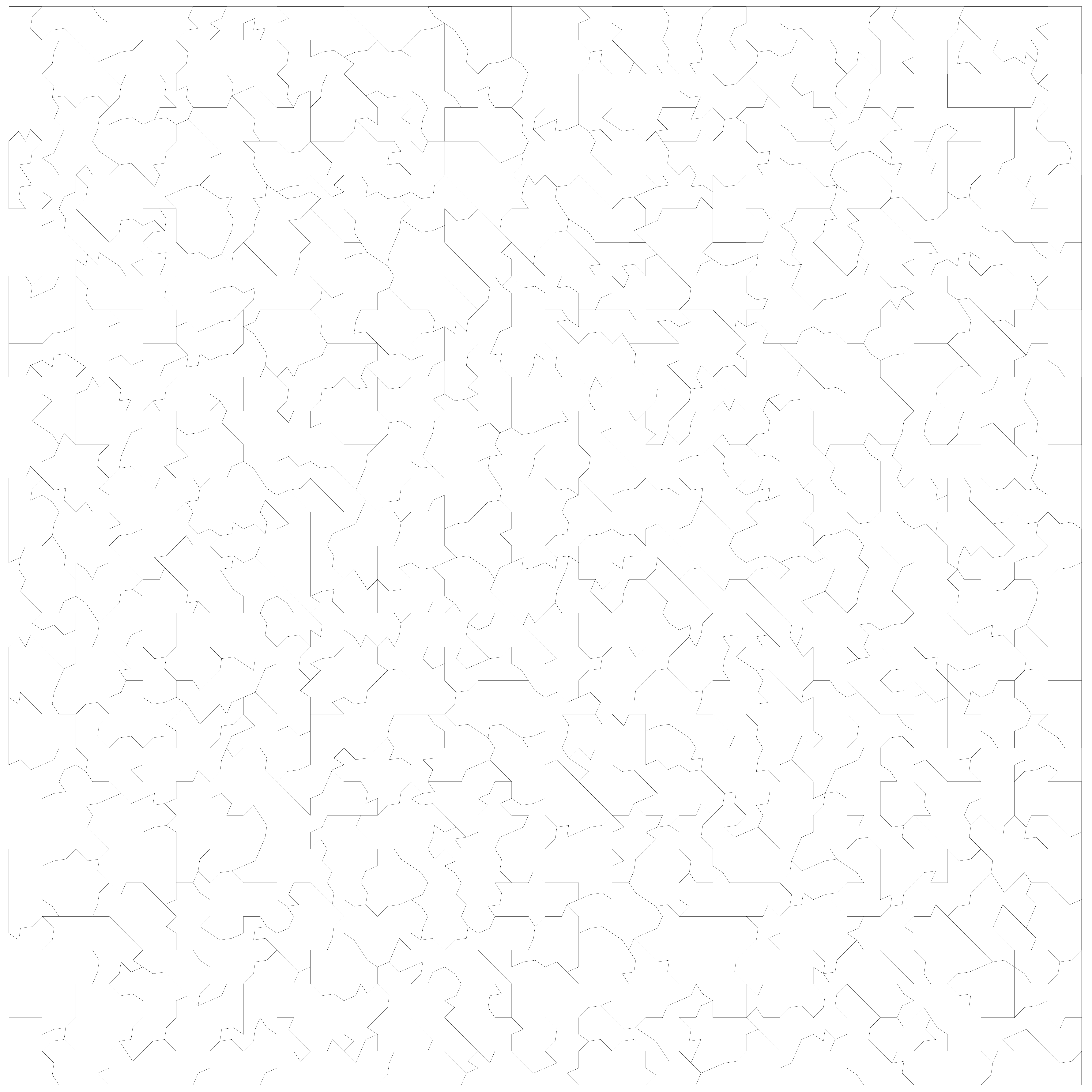} 
	\caption{Coarsened meshes}
	\label{fig:coarse.mesh}
\end{figure}

\begin{table}[H]
	\centering
	\pgfplotstableread{data/coarse_mesh_test_k0.dat}\loadedtable
	\pgfplotstabletypeset
	[
	columns={hMin,hMax,NbCells,NbInternalEdges}, 
	columns/hMin/.style={column name=\(\hmin\)},
	columns/hMax/.style={column name=\(\hmax\)},
	columns/NbCells/.style={column name=Nb. Elements},
	columns/NbInternalEdges/.style={column name=Nb. Internal Edges},
	every head row/.style={before row=\toprule,after row=\midrule},
	every last row/.style={after row=\bottomrule} 
	]\loadedtable
	\caption{Coarsened meshes}
	\label{table:coarse.mesh}
\end{table}

The condition number and eigenvalues on each mesh are plotted in Figure \ref{fig:coarse.mesh.test}. As the mesh is coarsened the condition number appears to decay slightly slower than $h^{-2}$. This is easily explainable due to the successive meshes becoming less `round', and thus the mesh regularity parameter increasing slightly with each level of coarsening.

\begin{figure}[H]
\centering
\ref{coarse_mesh_test}
\vspace{0.5cm}\\
\subcaptionbox{$\kappa(\mat{A}_h)$ vs $h^{-1}$}
{
	\begin{tikzpicture}[scale=0.57]
	\begin{loglogaxis}[legend columns=3, legend to name=coarse_mesh_test, xlabel={$h^{-1}$}, ylabel={$\kappa(\mat{A}_h)$}]
	\addplot table[x=hMaxInv, y=Condition] {data/coarse_mesh_test_vol_stab_k0.dat};
	\addlegendentry{\(k=0\)};
	\addplot table[x=hMaxInv,y=Condition] {data/coarse_mesh_test_vol_stab_k1.dat};
	\addlegendentry{\(k=1\)};
	\addplot table[x=hMaxInv,y=Condition] {data/coarse_mesh_test_vol_stab_k2.dat};
	\addlegendentry{\(k=2\)};
	\logLogSlopeTriangle{0.9}{0.4}{0.1}{2}{black};
	\end{loglogaxis}
	\end{tikzpicture}
}
\subcaptionbox{$\lambda_{\rm{min}}(\mat{A}_h)$ vs $h$}
{
	\begin{tikzpicture}[scale=0.57]
	\begin{loglogaxis}[xlabel={$h$}, ylabel={$\lambda_{\rm{min}}(\mat{A}_h)$}]
	\addplot table[x=hMax,y=MinEig] {data/coarse_mesh_test_vol_stab_k2.dat};
	\addplot table[x=hMax,y=MinEig] {data/coarse_mesh_test_vol_stab_k1.dat};
	\addplot table[x=hMax,y=MinEig] {data/coarse_mesh_test_vol_stab_k2.dat};
	\logLogSlopeTriangle{0.9}{0.4}{0.1}{1}{black};
	\end{loglogaxis}
	\end{tikzpicture}
}
\subcaptionbox{$\lambda_{\rm{max}}(\mat{A}_h)$ vs $h^{-1}$}
{
	\begin{tikzpicture}[scale=0.57]
	\begin{loglogaxis}[xlabel={$h^{-1}$}, ylabel={$\lambda_{\rm{max}}(\mat{A}_h)$}]
	\addplot table[x=hMaxInv,y=MaxEig] {data/coarse_mesh_test_vol_stab_k0.dat};
	\addplot table[x=hMaxInv,y=MaxEig] {data/coarse_mesh_test_vol_stab_k1.dat};
	\addplot table[x=hMaxInv,y=MaxEig] {data/coarse_mesh_test_vol_stab_k2.dat};
	\logLogSlopeTriangle{0.9}{0.4}{0.1}{1}{black};
	\end{loglogaxis}
	\end{tikzpicture}
}
\caption{Coarse square meshes}
\label{fig:coarse.mesh.test}
\end{figure}
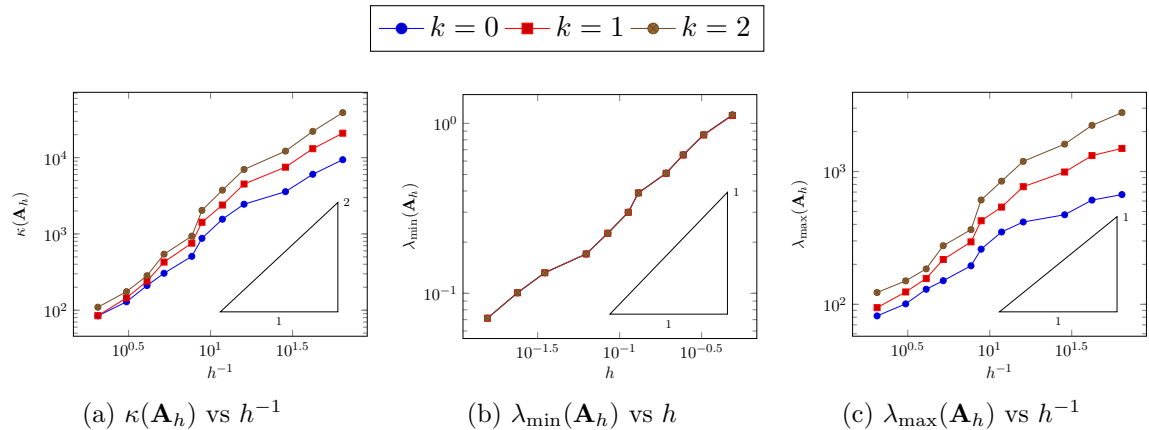

In Figure \ref{fig:k.test} we fix the mesh (the mesh with $\hmin=0.11$ in Table \ref{table:coarse.mesh}) and vary the polynomial degree $k$. We test with the stabilisation defined by \eqref{eq:stab.def} as well as the gradient-based \eqref{eq:grad.stab.def} and boundary \eqref{eq:bdry.stab.def} stabilisations. It is apparent from Figure \ref{fig:k.test} that all three stabilisations result in a system matrix with eigenvalue estimates determined by Theorem \ref{th:estimates}.

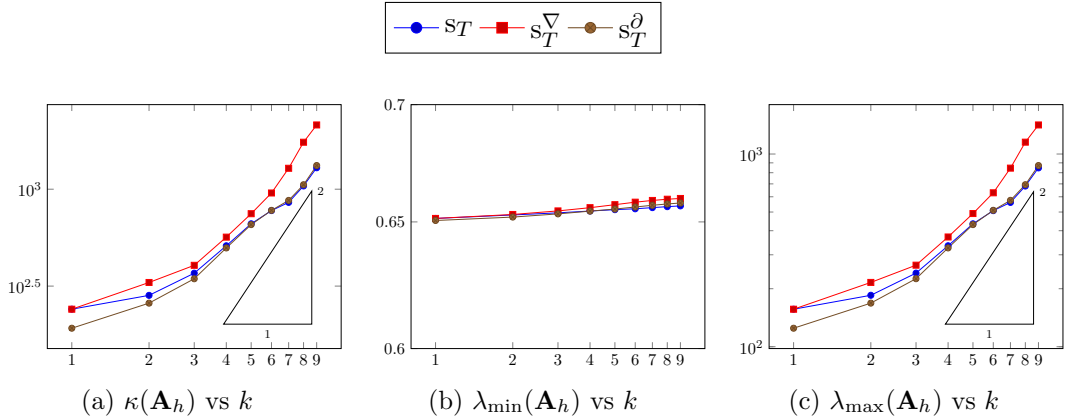
\begin{figure}[H]
\centering
\ref{ktest}
\vspace{0.5cm}\\
	\subcaptionbox{$\kappa(\mat{A}_h)$ vs $k$}
	{
		\begin{tikzpicture}[scale=0.57]
		\begin{loglogaxis}
		[ 
		legend columns=3, 
		legend to name=ktest,
		xtick={1,2,3,4,5,6,7,8,9},
		xticklabels={1,2,3,4,5,6,7,8,9}
		]
		\addplot table[x=EdgeDegree, y=Condition] {data/k_test_coarse_mesh_vol_stab.dat};
		\addlegendentry{$\sT$};
		\addplot table[x=EdgeDegree, y=Condition] {data/k_test_mesh1_5_coarse_8.dat};
		\addlegendentry{$\sTgrad$};
		\addplot table[x=EdgeDegree, y=Condition] {data/k_test_coarse_mesh_bdry_stab.dat};
		\addlegendentry{$\sTbdry$};
		\logLogSlopeTriangle{0.9}{0.3}{0.1}{2}{black};
		\end{loglogaxis}
		\end{tikzpicture}
	}
	\subcaptionbox{$\lambda_{\rm{min}}(\mat{A}_h)$ vs $k$}
	{
		\begin{tikzpicture}[scale=0.57]
		\begin{loglogaxis}
		[ 
		xtick={1,2,3,4,5,6,7,8,9},
		xticklabels={1,2,3,4,5,6,7,8,9},
		ymin=0.6,
		ymax=0.7,
		ytick={0.6,0.65,0.7},
		yticklabels={0.6,0.65,0.7}
		]
		\addplot table[x=EdgeDegree, y=MinEig] {data/k_test_coarse_mesh_vol_stab.dat};
		\addplot table[x=EdgeDegree, y=MinEig] {data/k_test_mesh1_5_coarse_8.dat};
		\addplot table[x=EdgeDegree, y=MinEig] {data/k_test_coarse_mesh_bdry_stab.dat};
		\end{loglogaxis}
		\end{tikzpicture}
	}
	\subcaptionbox{$\lambda_{\rm{max}}(\mat{A}_h)$ vs $k$}
	{
		\begin{tikzpicture}[scale=0.57]
		\begin{loglogaxis}
		[ 
		xtick={1,2,3,4,5,6,7,8,9},
		xticklabels={1,2,3,4,5,6,7,8,9}
		]
		\addplot table[x=EdgeDegree, y=MaxEig] {data/k_test_coarse_mesh_vol_stab.dat};
		\addplot table[x=EdgeDegree, y=MaxEig] {data/k_test_mesh1_5_coarse_8.dat};
		\addplot table[x=EdgeDegree, y=MaxEig] {data/k_test_coarse_mesh_bdry_stab.dat};
		\logLogSlopeTriangle{0.9}{0.3}{0.1}{2}{black};
		\end{loglogaxis}
		\end{tikzpicture}
	}
\caption{Condition number vs polynomial degree}
\label{fig:k.test}
\end{figure}

\subsection{Cut meshes}

In this section, we apply the \ac{hho} method to cut meshes using the aggregation strategy proposed above. The computation of the cut meshes and the boundary-element intersections has been carried out using the $\mathtt{Gridap}$ open-source Julia library \cite{Badia2020b} version 0.16.3 and its extension package for unfitted methods GridapEmbedded.jl~\cite{GridapEmbedded-jl} version 0.7 (see \cite{2110.01378} for more details). We consider first-order boundary representations -- that is, we consider a piecewise linear approximation of the curved boundary. 

\subsubsection{Test A}

Take the circular domain \(\Omega = \{(x, y)\in\R^2:x^2 + y^2 < 1\} \subset\R^2\) and consider three similar cut meshes of $\Omega$, with a parameter $\epsilon$ controlling the diameter of the smallest cut elements ($\epsilon < \hT$ for all $T\in\Th$). The mesh data is given in Table \ref{table:mesh.epsilon.cut}, and we plot values of the condition number and eigenvalues versus $\epsilon$ in Figure \ref{fig:small.cut.cells}. It is clear that both the maximum eigenvalue, and the condition number, become unbounded as $\epsilon\to 0$. The minimum eigenvalue, however, stays approximately constant. This is consistent with the theory as each face is connected to at least one element with diameter proportional to $\hmax$, thus we expect $\lambda_{\rm{min}}(\mat{A}_h)\sim \hmax = {\rm const}$.

\begin{table}[H]
	\centering
	\pgfplotstableread{data/small_cut_cells_test_k0.dat}\loadedtable
	\pgfplotstabletypeset
	[
	columns={Epsilon,hMin,hMax,NbCells,NbInternalEdges}, 
	columns/Epsilon/.style={column name=\(\epsilon\)}, 
	columns/hMin/.style={column name=\(\hmin\)},
	columns/hMax/.style={column name=\(\hmax\)},
	columns/NbCells/.style={column name=Nb. Elements},
	columns/NbInternalEdges/.style={column name=Nb. Internal Edges},
	every head row/.style={before row=\toprule,after row=\midrule},
	every last row/.style={after row=\bottomrule} 
	]\loadedtable
	\caption{Parameters of the circular meshes with varying values of $\epsilon$}
	\label{table:mesh.epsilon.cut}
\end{table}

\begin{figure}[H]
\centering
\ref{small_cut_cells}
\vspace{0.5cm}\\
\subcaptionbox{$\kappa(\mat{A}_h)$ vs $\epsilon$}
{
	\begin{tikzpicture}[scale=0.57]
	\begin{loglogaxis}[legend columns=3, legend to name=small_cut_cells, xlabel={$\epsilon$}, ylabel={$\kappa(\mat{A}_h)$}]
	\addplot table[x=Epsilon,y=Condition] {data/small_cut_cells_test_vol_stab_k0.dat};
	\addlegendentry{\(k=0\)};
	\addplot table[x=Epsilon,y=Condition] {data/small_cut_cells_test_vol_stab_k1.dat};
	\addlegendentry{\(k=1\)};
	\addplot table[x=Epsilon,y=Condition] {data/small_cut_cells_test_vol_stab_k2.dat};
	\addlegendentry{\(k=2\)};
	\reverseLogLogSlopeTriangle{0.5}{0.4}{0.1}{1}{black};
	\end{loglogaxis}
	\end{tikzpicture}
}
\subcaptionbox{$\lambda_{\rm{min}}(\mat{A}_h)$ vs $\epsilon$}
{
	\begin{tikzpicture}[scale=0.57]
	\begin{loglogaxis}[
	  legend columns=3, 
	  xlabel={$\epsilon$}, 
	  ylabel={$\lambda_{\rm{min}}(\mat{A}_h)$}, 
	  ymin=0.5, 
	  ymax=0.6, 
	  ytick={0.5,0.55,0.6}, 
	  yticklabels={0.5,0.55,0.6}
	  ]
	\addplot table[x=Epsilon,y=MinEig] {data/small_cut_cells_test_vol_stab_k0.dat};
	\addplot table[x=Epsilon,y=MinEig] {data/small_cut_cells_test_vol_stab_k1.dat};
	\addplot table[x=Epsilon,y=MinEig] {data/small_cut_cells_test_vol_stab_k2.dat};
	\end{loglogaxis}
	\end{tikzpicture}
}
\subcaptionbox{$\lambda_{\rm{max}}(\mat{A}_h)$ vs $\epsilon$}
{
	\begin{tikzpicture}[scale=0.57]
	\begin{loglogaxis}[legend columns=3, xlabel={$\epsilon$}, ylabel={$\lambda_{\rm{max}}(\mat{A}_h)$}]
	\addplot table[x=Epsilon,y=MaxEig] {data/small_cut_cells_test_vol_stab_k0.dat};
	\addplot table[x=Epsilon,y=MaxEig] {data/small_cut_cells_test_vol_stab_k1.dat};
	\addplot table[x=Epsilon,y=MaxEig] {data/small_cut_cells_test_vol_stab_k2.dat};
	\reverseLogLogSlopeTriangle{0.5}{0.4}{0.1}{1}{black};
	\end{loglogaxis}
	\end{tikzpicture}
}
\caption{Cut meshes with small-cut elements}
\label{fig:small.cut.cells}
\end{figure}
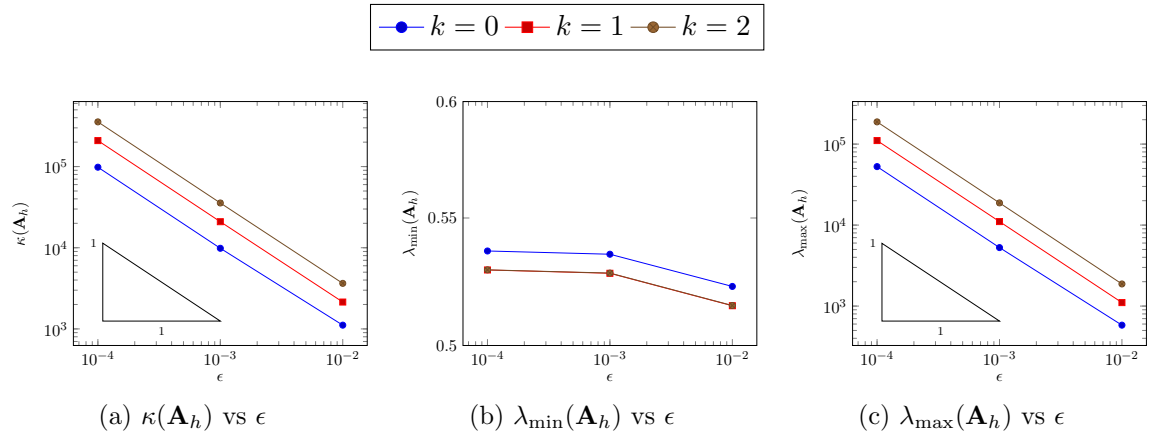

To avoid unbounded condition numbers on cut meshes, sliver and small-cut elements are aggregated as explained above. 
A portion of the resulting aggregated mesh $\mathcal{T}_{h}^{\mathrm{ag}}$ of $\Omega$ is plotted in Figure \ref{fig:cut.mesh.aggr.zoom} showing the aggregation of sliver-cut and small-cut elements. We note the existence of arbitrarily small faces after the aggregation of small-cut elements.

\begin{figure}[H]
\centering
\subcaptionbox{No aggregation}{\includegraphics[width=0.3\textwidth]{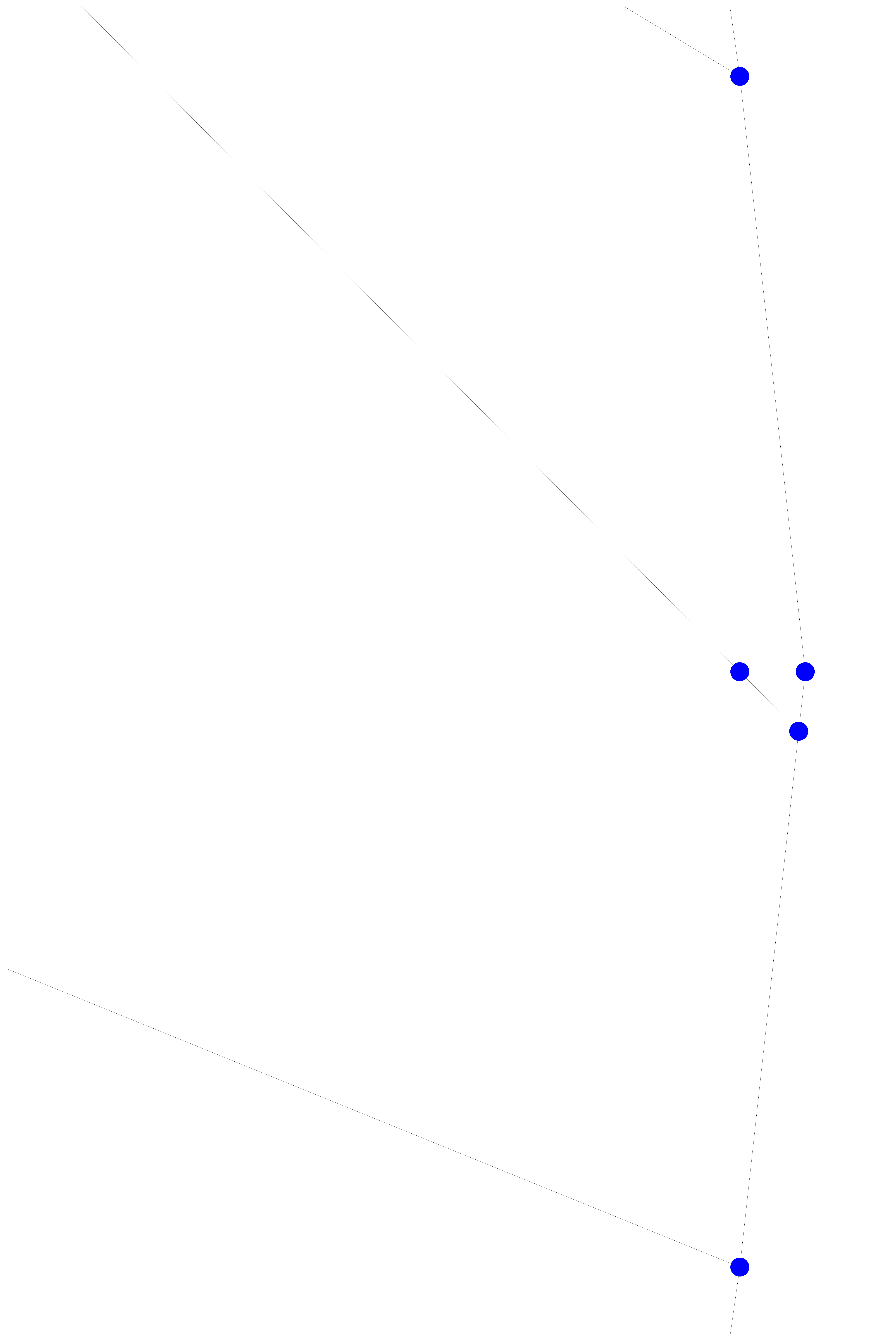}}$\ $
\subcaptionbox{Sliver-cut elements aggregated}{\includegraphics[width=0.3\textwidth]{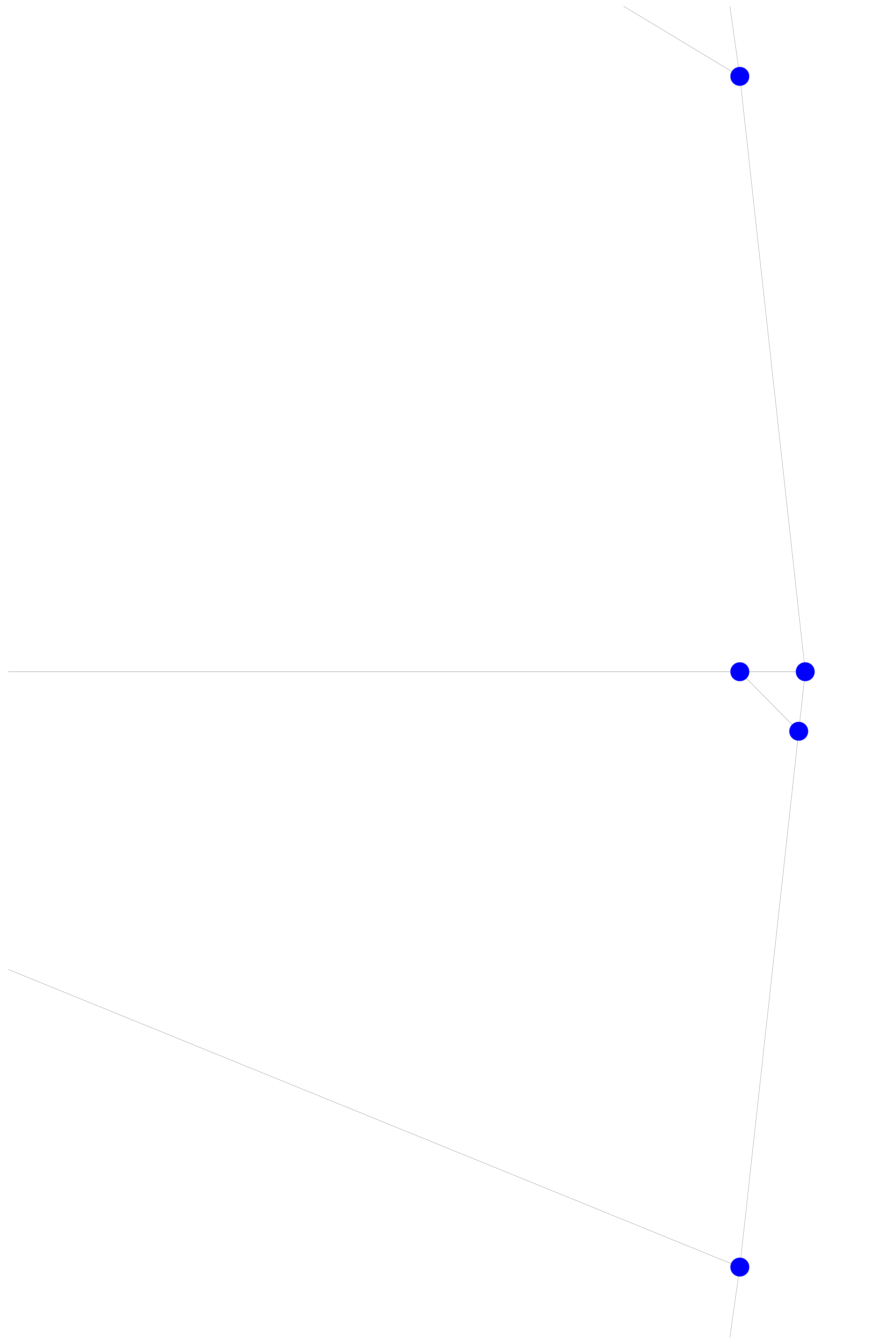}}$\ $
\subcaptionbox{Sliver-cut and small-cut elements aggregated}{\includegraphics[width=0.3\textwidth]{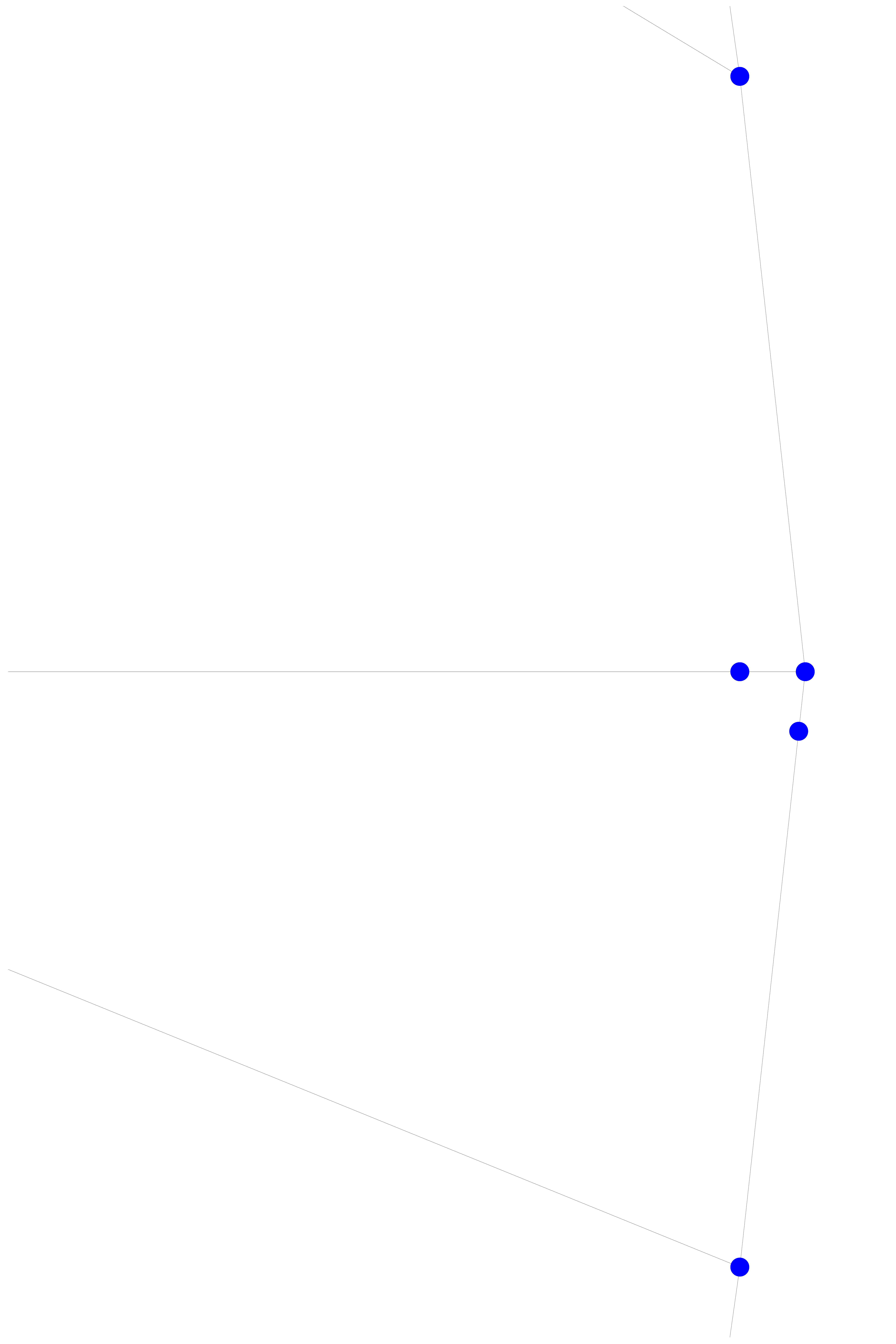}}
\caption{Aggregation of cut meshes (local)}
\label{fig:cut.mesh.aggr.zoom}
\end{figure}

\begin{figure}[H]
	\centering
	\subcaptionbox{No aggregation}{\includegraphics[width=0.495\textwidth]{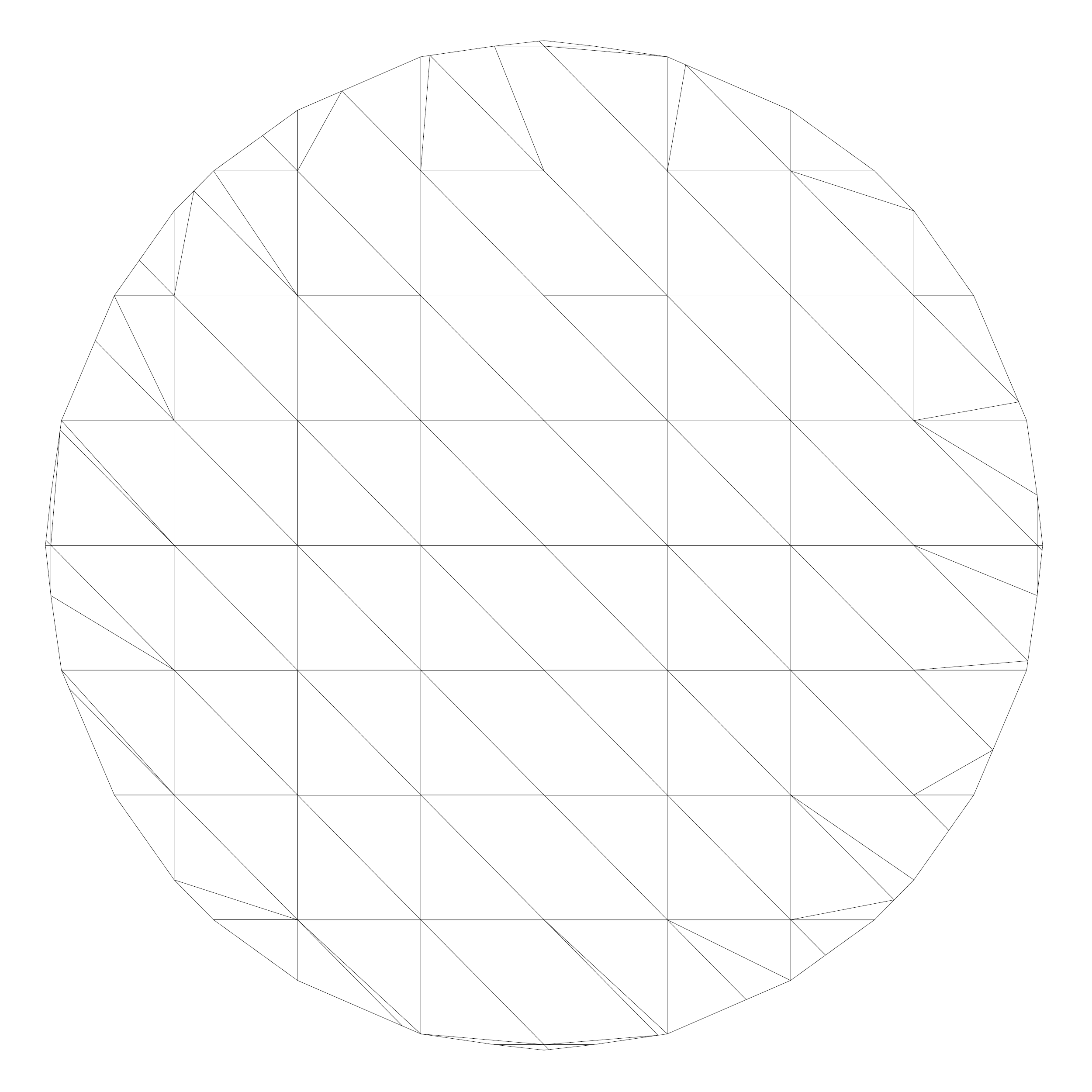}}
	\subcaptionbox{Sliver-cut and small-cut elements aggregated}{\includegraphics[width=0.495\textwidth]{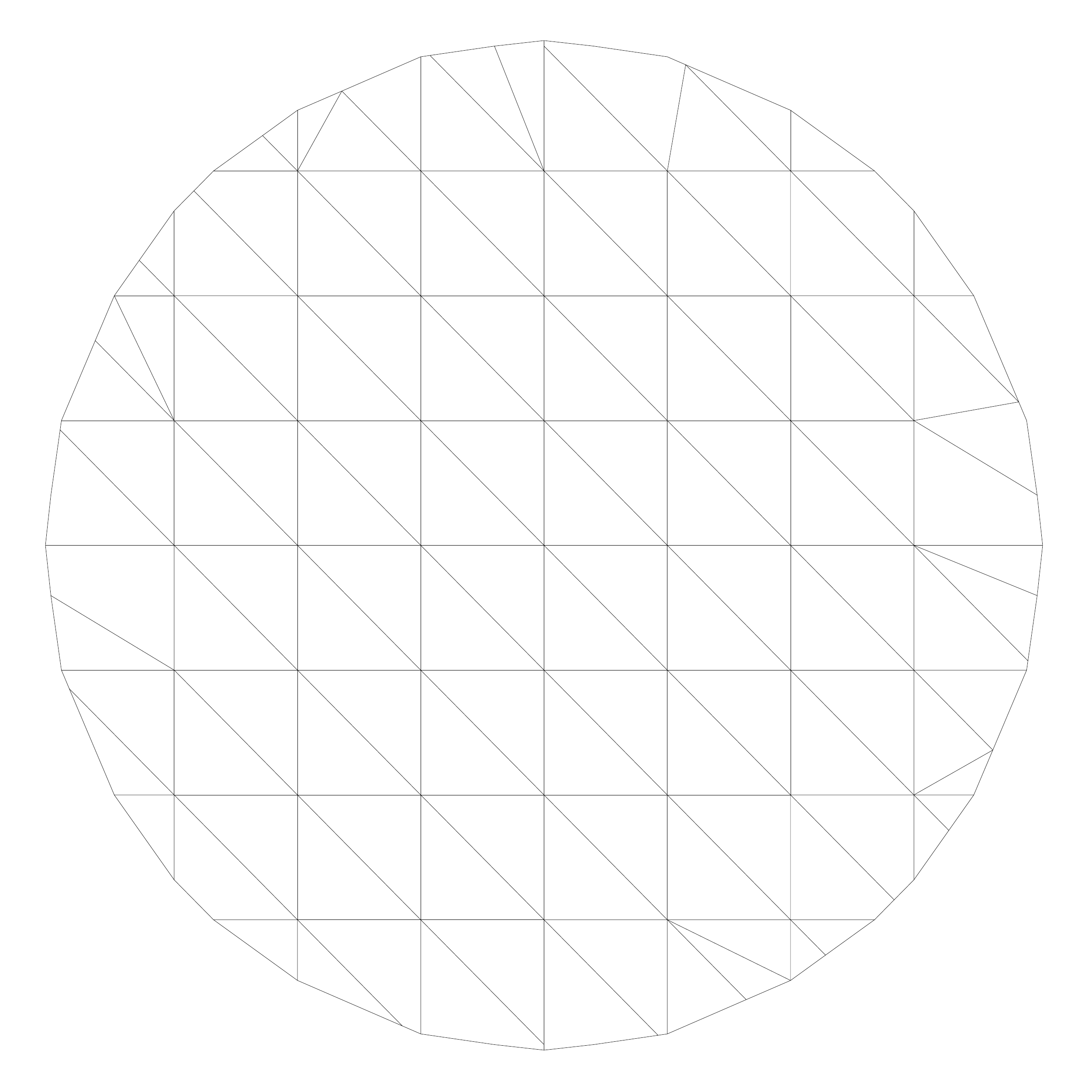}}
	\caption{Aggregation of cut meshes (global)}
	\label{fig:cut.mesh.aggr.mesh1.global}
\end{figure}

For each mesh in Table \ref{table:mesh.epsilon.cut} we consider a corresponding aggregated mesh, and in Figure \ref{fig:aggr.cells} we test the condition number and eigenvalues of the system matrix for various polynomial degrees $k$. It is clear that after aggregation the minimal eigenvalue, maximal eigenvalue and condition number are independent of $\epsilon$. 

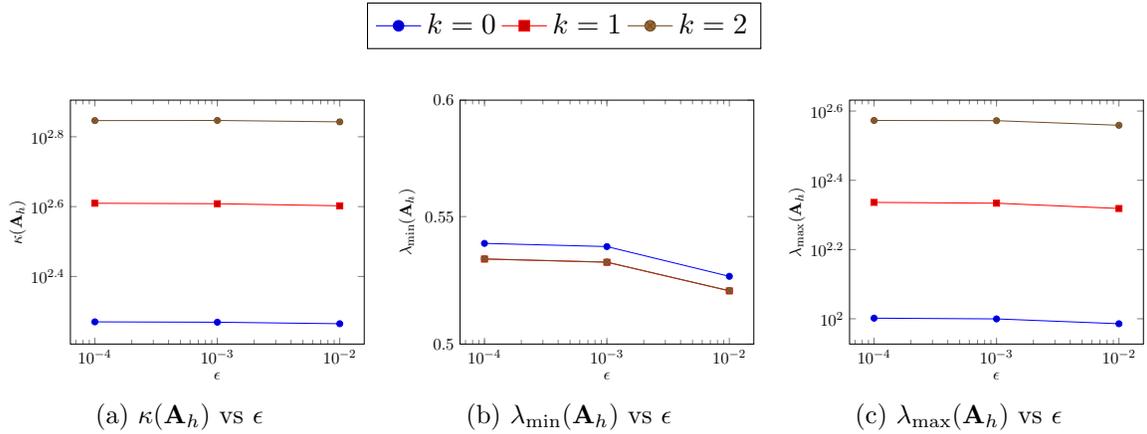
\begin{figure}[H]
\centering
\ref{aggr_cut_cells}
\vspace{0.5cm}\\
\subcaptionbox{$\kappa(\mat{A}_h)$ vs $\epsilon$}
{
	\begin{tikzpicture}[scale=0.57]
	\begin{loglogaxis}[
	  legend columns=3, 
	  legend to name=aggr_cut_cells, 
	  xlabel={$\epsilon$}, 
	  ylabel={$\kappa(\mat{A}_h)$}
	  ]
	\addplot table[x=Epsilon,y=Condition] {data/aggr_cut_cells_test_vol_stab_k0.dat};
	\addlegendentry{\(k=0\)};
	\addplot table[x=Epsilon,y=Condition] {data/aggr_cut_cells_test_vol_stab_k1.dat};
	\addlegendentry{\(k=1\)};
	\addplot table[x=Epsilon,y=Condition] {data/aggr_cut_cells_test_vol_stab_k2.dat};
	\addlegendentry{\(k=2\)};
	\end{loglogaxis}
	\end{tikzpicture}
}
\subcaptionbox{$\lambda_{\rm{min}}(\mat{A}_h)$ vs $\epsilon$}
{
	\begin{tikzpicture}[scale=0.57]
	\begin{loglogaxis}[
	  legend columns=3, 
	  xlabel={$\epsilon$}, 
	  ylabel={$\lambda_{\rm{min}}(\mat{A}_h)$},
	  ymin=0.5,
	  ymax=0.6,
	  ytick={0.5,0.55,0.6}, 
	  yticklabels={0.5,0.55,0.6}
	  ]
	\addplot table[x=Epsilon,y=MinEig] {data/aggr_cut_cells_test_vol_stab_k0.dat};
	\addplot table[x=Epsilon,y=MinEig] {data/aggr_cut_cells_test_vol_stab_k1.dat};
	\addplot table[x=Epsilon,y=MinEig] {data/aggr_cut_cells_test_vol_stab_k2.dat};
	\end{loglogaxis}
	\end{tikzpicture}
}
\subcaptionbox{$\lambda_{\rm{max}}(\mat{A}_h)$ vs $\epsilon$}
{
	\begin{tikzpicture}[scale=0.57]
	\begin{loglogaxis}[legend columns=3, xlabel={$\epsilon$}, ylabel={$\lambda_{\rm{max}}(\mat{A}_h)$}]
	\addplot table[x=Epsilon,y=MaxEig] {data/aggr_cut_cells_test_vol_stab_k0.dat};
	\addplot table[x=Epsilon,y=MaxEig] {data/aggr_cut_cells_test_vol_stab_k1.dat};
	\addplot table[x=Epsilon,y=MaxEig] {data/aggr_cut_cells_test_vol_stab_k2.dat};
	\end{loglogaxis}
	\end{tikzpicture}
}
\caption{Cut meshes with aggregated elements}
\label{fig:aggr.cells}
\end{figure}

\subsubsection{Test B}

We now consider a sequence of cut and approximate meshes of the circular domain $\Omega = \{(x, y)\in\R^2:x^2 + y^2 < 1\}$ and track the conditioning of the scheme before and after the agglomeration of sliver-cut and small-cut elements. The parameters of this sequence of meshes are given in Table \ref{table:mesh.circular.no.aggr} and three of the meshes are plotted in Figure \ref{fig:mesh.circular.no.aggr}.

\begin{table}[H]
	\centering
	\pgfplotstableread{data/circular_tri_no_aggr_k0.dat}\loadedtable
	\pgfplotstabletypeset
	[
	columns={hMin,hMax,NbCells,NbInternalEdges}, 
	columns/hMin/.style={column name=\(\hmin\)},
	columns/hMax/.style={column name=\(\hmax\)},
	columns/NbCells/.style={column name=Nb. Elements},
	columns/NbInternalEdges/.style={column name=Nb. Internal Edges},
	every head row/.style={before row=\toprule,after row=\midrule},
	every last row/.style={after row=\bottomrule} 
	]\loadedtable
	\caption{Parameters of the meshes used in Test B prior to aggregation}
	\label{table:mesh.circular.no.aggr}
\end{table}

\begin{figure}[H]
\centering
\includegraphics[width=0.3\textwidth]{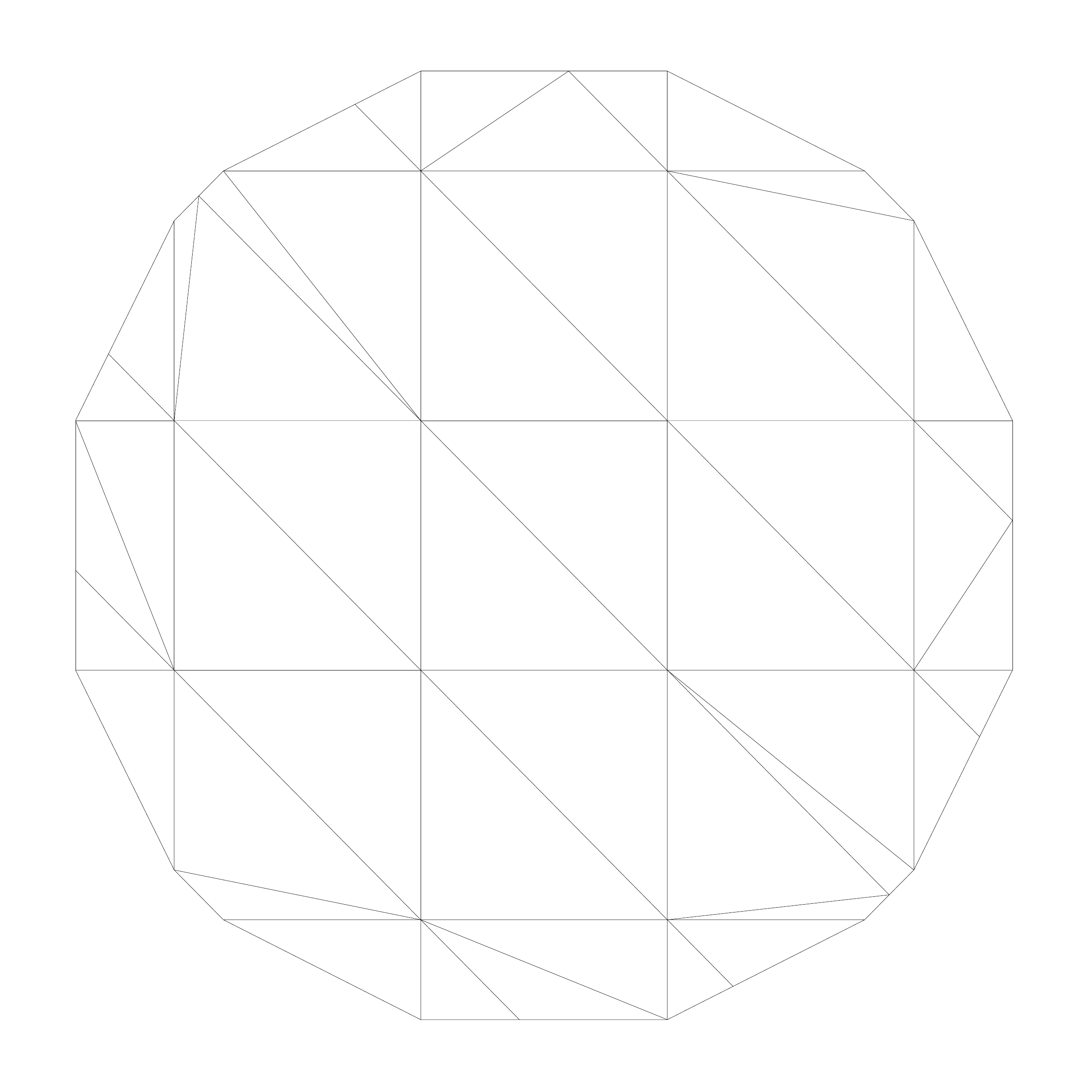} 
\includegraphics[width=0.3\textwidth]{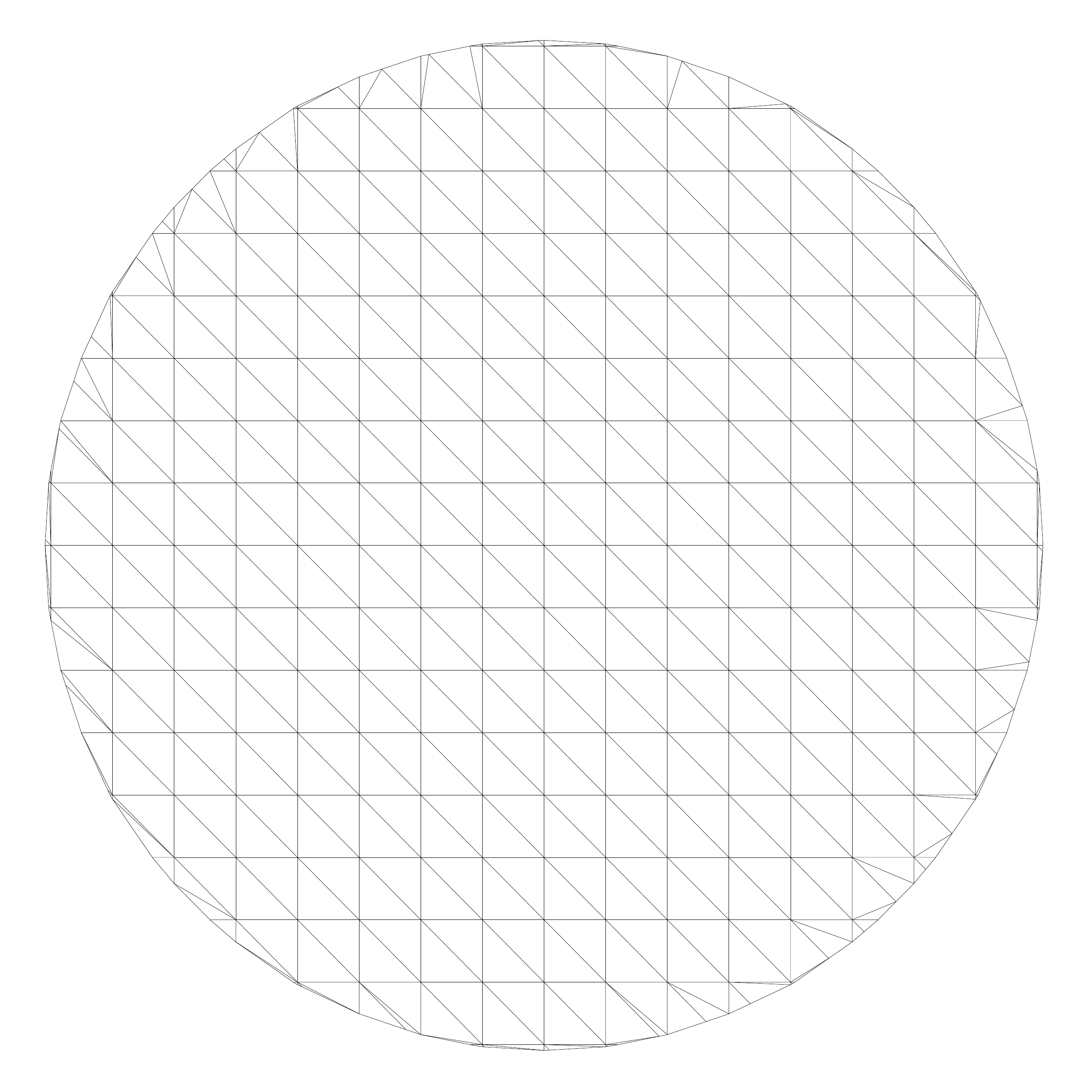} 
\includegraphics[width=0.3\textwidth]{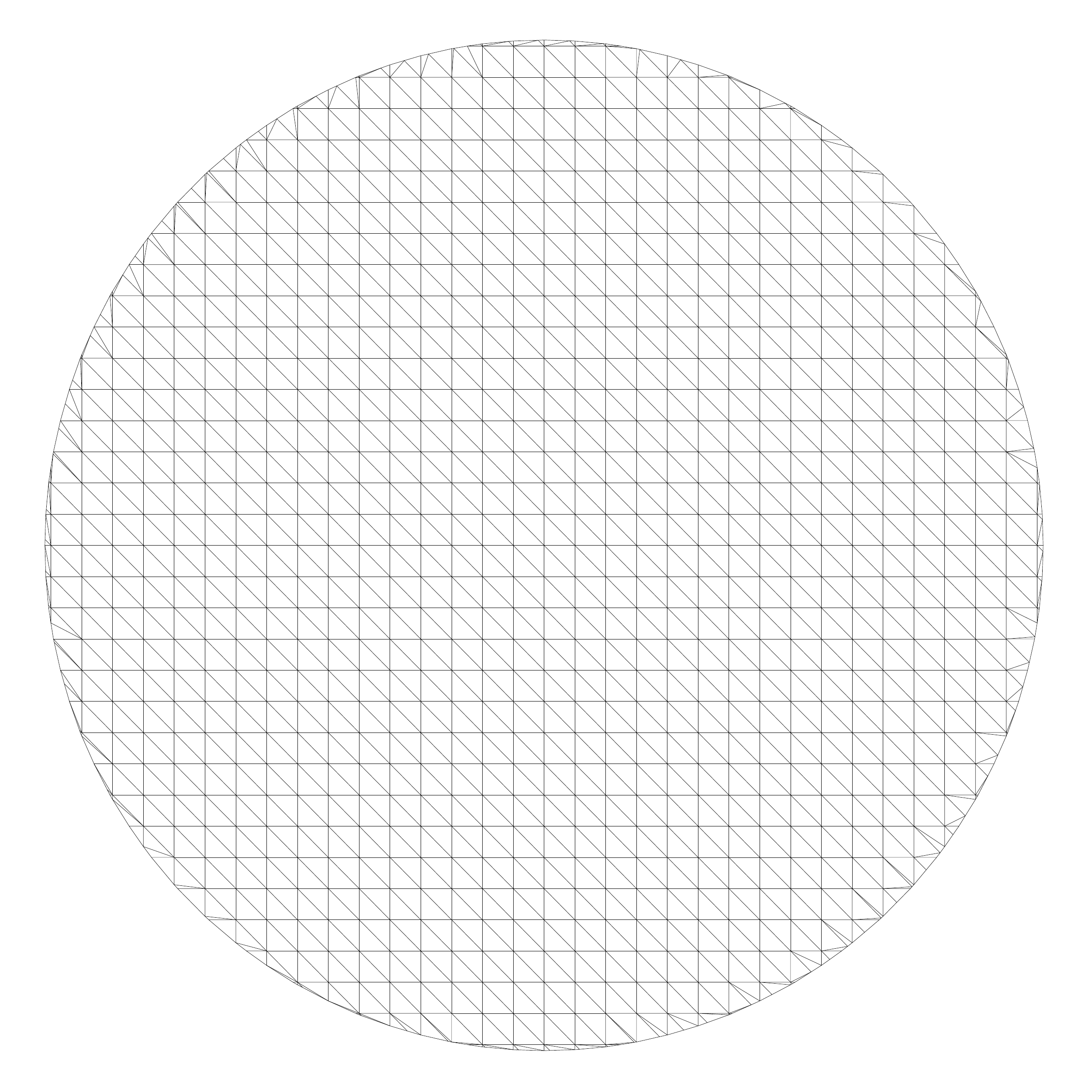} 
\caption{Three of the meshes used in Test B prior to aggregation}
\label{fig:mesh.circular.no.aggr}
\end{figure}

Prior to aggregation of small-cut elements, each face is attached to at least one element of diameter proportional to $\hmax$. Thus we expect to observe $\lambda_{\rm min}(\mat{A}_h) \sim \hmax$ and $\lambda_{\rm max}(\mat{A}_h) \sim \hmin^{-1}$. In Figure \ref{fig:circular.no.aggr} we plot the condition number and eigenvalues for each mesh prior to aggregation. The results are not smooth due to the presence of sliver-cut elements which have potentially very large mesh regularity parameters. In Figure \ref{fig:circular.iso}, we observe that after the agglomeration of sliver-cut elements the results behave as predicted by theory. 

\begin{figure}[H]
\centering
\ref{circular_no_aggr}
\vspace{0.5cm}\\
\subcaptionbox{$\kappa(\mat{A}_h)$ vs $\hmax^{-1}\hmin^{-1}$}
{
	\begin{tikzpicture}[scale=0.57]
	\begin{loglogaxis}[legend columns=3, legend to name=circular_no_aggr, xlabel={$\hmax^{-1}\hmin^{-1}$}, ylabel={$\kappa(\mat{A}_h)$}]
	\addplot table[x=InvMaxMin,y=Condition] {data/circular_tri_no_aggr_vol_stab_k0.dat};
	\addlegendentry{\(k=0\)};
	\addplot table[x=InvMaxMin,y=Condition] {data/circular_tri_no_aggr_vol_stab_k1.dat};
	\addlegendentry{\(k=1\)};
	\addplot table[x=InvMaxMin,y=Condition] {data/circular_tri_no_aggr_vol_stab_k2.dat};
	\addlegendentry{\(k=2\)};
	\logLogSlopeTriangle{0.9}{0.4}{0.1}{1}{black};
	\end{loglogaxis}
	\end{tikzpicture}
}
\subcaptionbox{$\lambda_{\rm{min}}(\mat{A}_h)$ vs $\hmax$}
{
	\begin{tikzpicture}[scale=0.57]
	\begin{loglogaxis}[legend columns=3, xlabel={$\hmax$}, ylabel={$\lambda_{\rm{min}}(\mat{A}_h)$}]
	\addplot table[x=hMax,y=MinEig] {data/circular_tri_no_aggr_vol_stab_k0.dat};
	\addplot table[x=hMax,y=MinEig] {data/circular_tri_no_aggr_vol_stab_k1.dat};
	\addplot table[x=hMax,y=MinEig] {data/circular_tri_no_aggr_vol_stab_k2.dat};
	\logLogSlopeTriangle{0.9}{0.4}{0.1}{1}{black};
	\end{loglogaxis}
	\end{tikzpicture}
}
\subcaptionbox{$\lambda_{\rm{max}}(\mat{A}_h)$ vs $\hmin^{-1}$}
{
	\begin{tikzpicture}[scale=0.57]
	\begin{loglogaxis}[legend columns=3, xlabel={$\hmin^{-1}$}, ylabel={$\lambda_{\rm{max}}(\mat{A}_h)$}]
	\addplot table[x=hMinInv,y=MaxEig] {data/circular_tri_no_aggr_vol_stab_k0.dat};
	\addplot table[x=hMinInv,y=MaxEig] {data/circular_tri_no_aggr_vol_stab_k1.dat};
	\addplot table[x=hMinInv,y=MaxEig] {data/circular_tri_no_aggr_vol_stab_k2.dat};
	\logLogSlopeTriangle{0.9}{0.4}{0.1}{1}{black};
	\end{loglogaxis}
	\end{tikzpicture}
}
\caption{Circular meshes with no aggregation}
\label{fig:circular.no.aggr}
\end{figure}
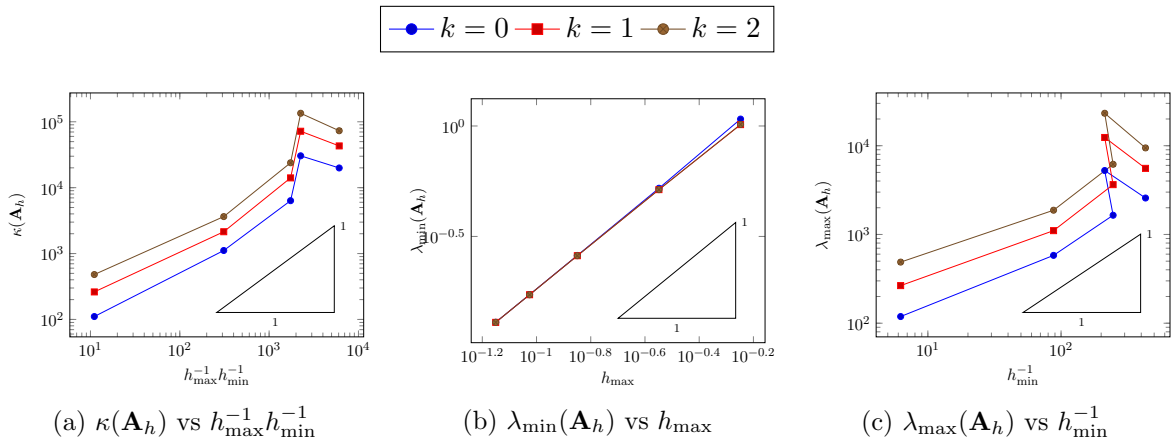

\begin{figure}[H]
\centering
\ref{circular_iso}
\vspace{0.5cm}\\
\subcaptionbox{$\kappa(\mat{A}_h)$ vs $h_{\textrm{min}}^{-1}h_{\textrm{max}}^{-1}$}
{
	\begin{tikzpicture}[scale=0.57]
	\begin{loglogaxis}[legend columns=3, legend to name=circular_iso, xlabel={$h_{\textrm{min}}^{-1}h_{\textrm{max}}^{-1}$}, ylabel={$\kappa(\mat{A}_h)$}]
	\addplot table[x=InvMaxMin, y=Condition] {data/circular_tri_isometric_vol_stab_k0.dat};
	\addlegendentry{\(k=0\)};
	\addplot table[x=InvMaxMin,y=Condition] {data/circular_tri_isometric_vol_stab_k1.dat};
	\addlegendentry{\(k=1\)};
	\addplot table[x=InvMaxMin,y=Condition] {data/circular_tri_isometric_vol_stab_k2.dat};
	\addlegendentry{\(k=2\)};
	\logLogSlopeTriangle{0.9}{0.4}{0.1}{1}{black};
	\end{loglogaxis}
	\end{tikzpicture}
}
\subcaptionbox{$\lambda_{\rm{min}}(\mat{A}_h)$ vs $h_{\textrm{max}}$}
{
	\begin{tikzpicture}[scale=0.57]
	\begin{loglogaxis}[legend columns=3, xlabel={$h_{\textrm{max}}$}, ylabel={$\lambda_{\rm{min}}(\mat{A}_h)$}]
	\addplot table[x=hMax,y=MinEig] {data/circular_tri_isometric_vol_stab_k0.dat};
	\addplot table[x=hMax,y=MinEig] {data/circular_tri_isometric_vol_stab_k1.dat};
	\addplot table[x=hMax,y=MinEig] {data/circular_tri_isometric_vol_stab_k2.dat};
	\logLogSlopeTriangle{0.9}{0.4}{0.1}{1}{black};
	\end{loglogaxis}
	\end{tikzpicture}
}
\subcaptionbox{$\lambda_{\rm{max}}(\mat{A}_h)$ vs $h_{\textrm{min}}^{-1}$}
{
	\begin{tikzpicture}[scale=0.57]
	\begin{loglogaxis}[legend columns=3, xlabel={$h_{\textrm{min}}^{-1}$}, ylabel={$\lambda_{\rm{max}}(\mat{A}_h)$}]
	\addplot table[x=hMinInv,y=MaxEig] {data/circular_tri_isometric_vol_stab_k0.dat};
	\addplot table[x=hMinInv,y=MaxEig] {data/circular_tri_isometric_vol_stab_k1.dat};
	\addplot table[x=hMinInv,y=MaxEig] {data/circular_tri_isometric_vol_stab_k2.dat};
	\logLogSlopeTriangle{0.9}{0.4}{0.1}{1}{black};
	\end{loglogaxis}
	\end{tikzpicture}
}
\caption{Circular meshes with sliver-cut elements aggregated}
\label{fig:circular.iso}
\end{figure}
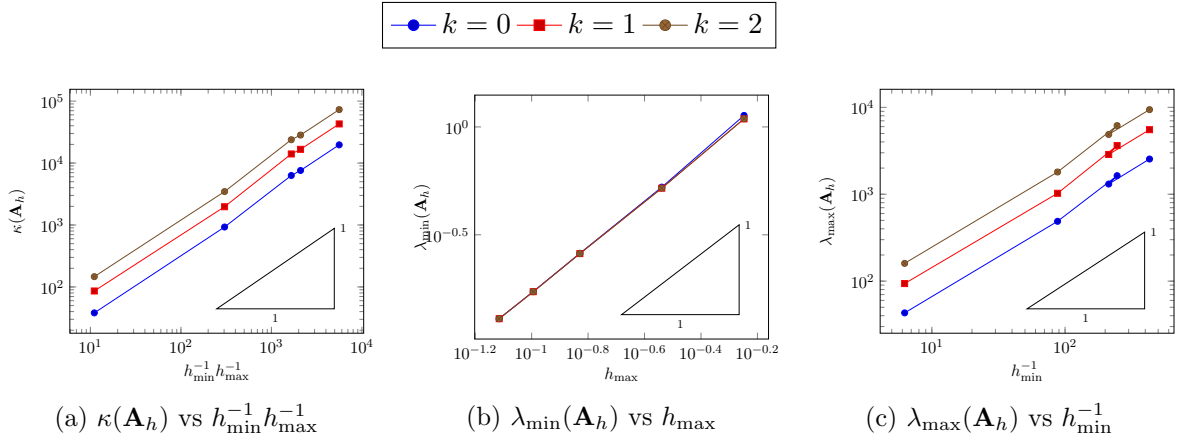

In Figure \ref{fig:circular.iso.uniform}, results are plotted with both sliver-cut and small-cut elements aggregated. The condition number is one order of magnitude smaller than it was prior to aggregation, and scales as $h^{-2}$. Again, this is expected due to the meshes being quasi-uniform ($h=\hmax\sim\hmin$).

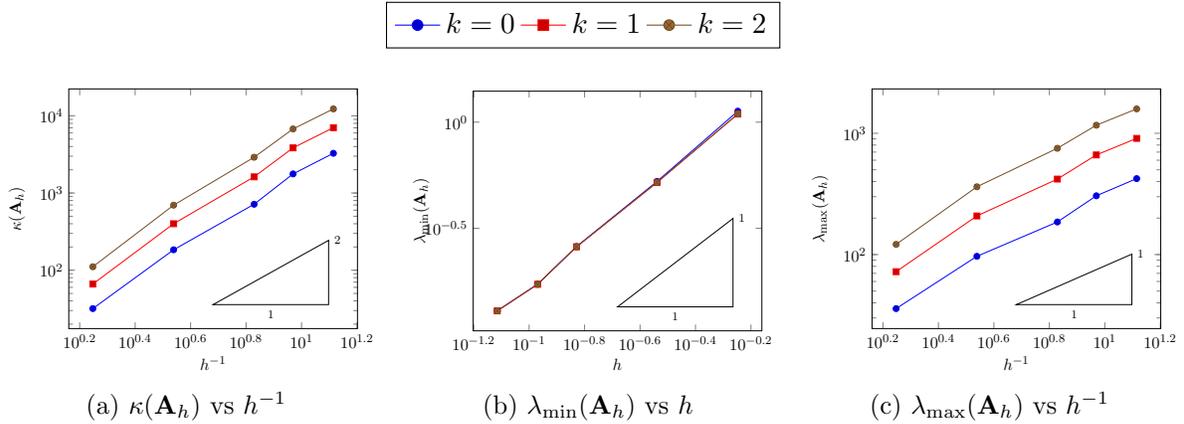
\begin{figure}[H]
	\centering
	\ref{circular_iso_uniform}
	\vspace{0.5cm}\\
	\subcaptionbox{$\kappa(\mat{A}_h)$ vs $h^{-1}$}
	{
		\begin{tikzpicture}[scale=0.56]
		\begin{loglogaxis}[legend columns=3, legend to name=circular_iso_uniform, xlabel={$h^{-1}$}, ylabel={$\kappa(\mat{A}_h)$}]
		\addplot table[x=hMaxInv,y=Condition] {data/circular_tri_isometric_hom_vol_stab_k0.dat};
		\addlegendentry{\(k=0\)};
		\addplot table[x=hMaxInv,y=Condition] {data/circular_tri_isometric_hom_vol_stab_k1.dat};
		\addlegendentry{\(k=1\)};
		\addplot table[x=hMaxInv,y=Condition] {data/circular_tri_isometric_hom_vol_stab_k2.dat};
		\addlegendentry{\(k=2\)};
		\logLogSlopeTriangle{0.9}{0.4}{0.1}{2}{black};
		\end{loglogaxis}
		\end{tikzpicture}
	}
	\subcaptionbox{$\lambda_{\rm{min}}(\mat{A}_h)$ vs $h$}
	{
		\begin{tikzpicture}[scale=0.56]
		\begin{loglogaxis}[legend columns=3, xlabel={$h$}, ylabel={$\lambda_{\rm{min}}(\mat{A}_h)$}]
		\addplot table[x=hMax,y=MinEig] {data/circular_tri_isometric_hom_vol_stab_k0.dat};
		\addplot table[x=hMax,y=MinEig] {data/circular_tri_isometric_hom_vol_stab_k1.dat};
		\addplot table[x=hMax,y=MinEig] {data/circular_tri_isometric_hom_vol_stab_k2.dat};
		\logLogSlopeTriangle{0.9}{0.4}{0.1}{1}{black};
		\end{loglogaxis}
		\end{tikzpicture}
	}
	\subcaptionbox{$\lambda_{\rm{max}}(\mat{A}_h)$ vs $h^{-1}$}
	{
		\begin{tikzpicture}[scale=0.56]
		\begin{loglogaxis}[legend columns=3, xlabel={$h^{-1}$}, ylabel={$\lambda_{\rm{max}}(\mat{A}_h)$}]
		\addplot table[x=hMaxInv,y=MaxEig] {data/circular_tri_isometric_hom_vol_stab_k0.dat};
		\addplot table[x=hMaxInv,y=MaxEig] {data/circular_tri_isometric_hom_vol_stab_k1.dat};
		\addplot table[x=hMaxInv,y=MaxEig] {data/circular_tri_isometric_hom_vol_stab_k2.dat};
		\logLogSlopeTriangle{0.9}{0.4}{0.1}{1}{black};
		\end{loglogaxis}
		\end{tikzpicture}
	}
	\caption{Circular meshes with sliver-cut and small-cut elements aggregated}
	\label{fig:circular.iso.uniform}
\end{figure}

\subsection{Penta-diagonal meshes}

We consider in this section a family of meshes with a penta-diagonal of elements being refined, and two large elements
on each side (see Figure \ref{fig:penta.mesh}). The purpose of this test is to assess the accuracy of our estimates, and the robustness of the HHO
condition number itself, when some large elements are neighbouring very small elements, all the while having an increasing number of
faces. While testing on such extreme meshes is possibly contrived, the behaviour of the condition number illustrates that in some situations the estimates of Theorem \ref{th:estimates} can be improved.

\begin{figure}[H]
	\centering
	\includegraphics[width=3\textwidth/10]{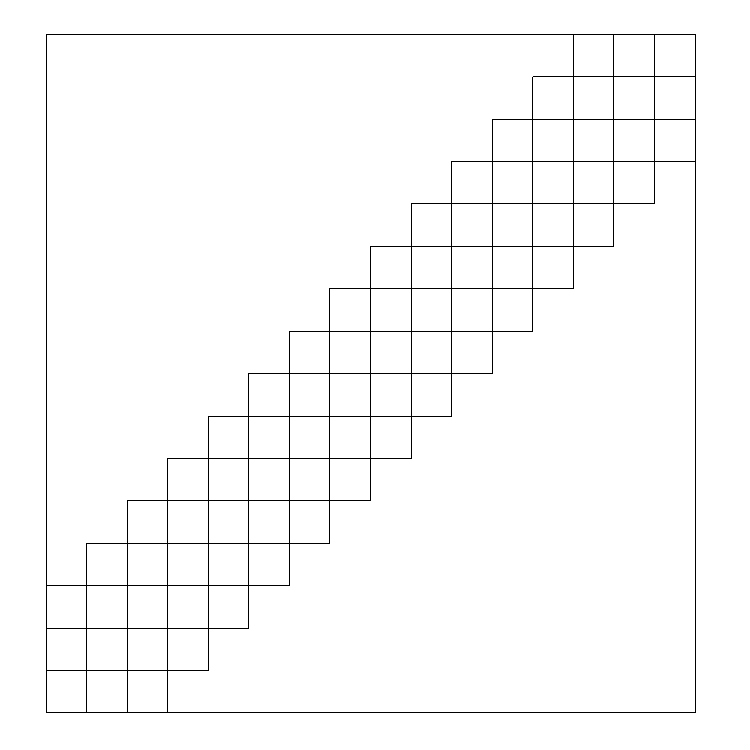} 
	\includegraphics[width=3\textwidth/10]{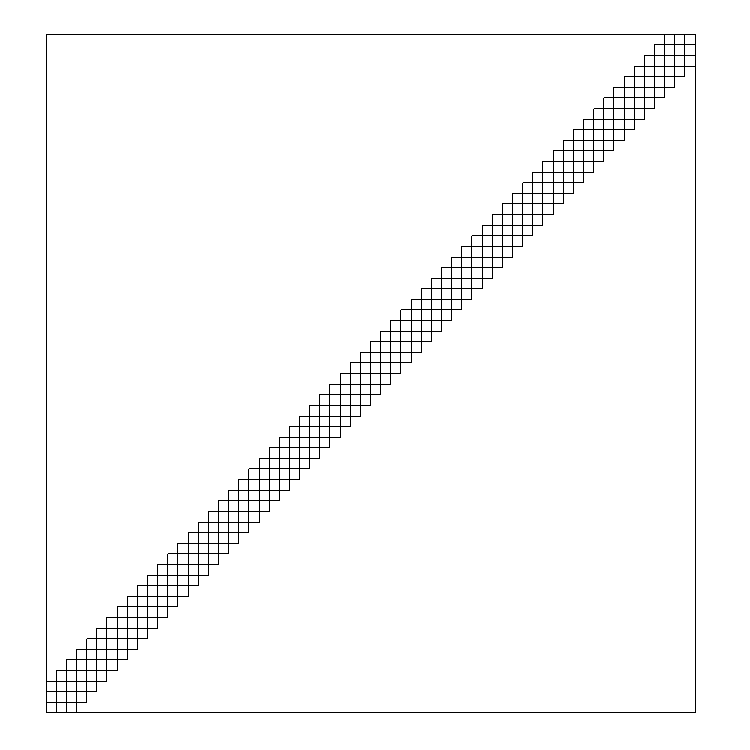} 
	\caption{Penta-diagonal square meshes}\label{fig:penta.mesh}
\end{figure}

\begin{figure}[H]
	\centering
	\ref{square_penta_diag_no_condition}
	\vspace{0.5cm}\\
	\subcaptionbox{$\kappa(\mat{A}_h)$ vs $h_{\textrm{min}}^{-1}$\label{fig:penta.kappa}}
	{
		\begin{tikzpicture}[scale=0.57]
		\begin{loglogaxis}[legend columns=3, legend to name=square_penta_diag_no_condition, xlabel={$h_{\textrm{min}}^{-1}$}, ylabel={$\kappa(\mat{A}_h)$}]
		\addplot table[x=hMinInv, y=Condition] {data/square_pentadiag_vol_stab_k0.dat};
		\addlegendentry{\(k=0\)};
		\addplot table[x=hMinInv,y=Condition] {data/square_pentadiag_vol_stab_k1.dat};
		\addlegendentry{\(k=1\)};
		\addplot table[x=hMinInv,y=Condition] {data/square_pentadiag_vol_stab_k2.dat};
		\addlegendentry{\(k=2\)};
		\logLogSlopeTriangle{0.9}{0.4}{0.1}{1}{black};
		\logLogSlopeTriangle{0.9}{0.4}{0.1}{2}{black};
		\end{loglogaxis}
		\end{tikzpicture}
	}
	\subcaptionbox{$\lambda_{\rm{min}}(\mat{A}_h)$ vs $h_{\textrm{min}}^{-1}$\label{fig:penta.lambdamin}}
	{
		\begin{tikzpicture}[scale=0.57]
		\begin{loglogaxis}[legend columns=3, xlabel={$h_{\textrm{min}}^{-1}$}, ylabel={$\lambda_{\rm{min}}(\mat{A}_h)$}]
		\addplot table[x=hMinInv,y=MinEig] {data/square_pentadiag_vol_stab_k0.dat};
		\addplot table[x=hMinInv,y=MinEig] {data/square_pentadiag_vol_stab_k1.dat};
		\addplot table[x=hMinInv,y=MinEig] {data/square_pentadiag_vol_stab_k2.dat};
		\end{loglogaxis}
		\end{tikzpicture}
	}
	\subcaptionbox{$\lambda_{\rm{max}}(\mat{A}_h)$ vs $h_{\textrm{min}}^{-1}$}
	{
		\begin{tikzpicture}[scale=0.57]
		\begin{loglogaxis}[legend columns=3, xlabel={$h_{\textrm{min}}^{-1}$}, ylabel={$\lambda_{\rm{max}}(\mat{A}_h)$}]
		\addplot table[x=hMinInv,y=MaxEig] {data/square_pentadiag_vol_stab_k0.dat};
		\addplot table[x=hMinInv,y=MaxEig] {data/square_pentadiag_vol_stab_k1.dat};
		\addplot table[x=hMinInv,y=MaxEig] {data/square_pentadiag_vol_stab_k2.dat};
		\logLogSlopeTriangle{0.9}{0.4}{0.1}{1}{black};
		\end{loglogaxis}
		\end{tikzpicture}
	}
	\caption{Penta-diagonal square meshes}\label{fig:penta.results}
\end{figure}

The results presented in Figure \ref{fig:penta.results} show a growth of the maximum eigenvalue as $\mathcal O(h_{\textrm{min}}^{-1})$, which is consistent with the estimate \eqref{est:lambda.max}. Figure \ref{fig:penta.lambdamin} however seems to indicate that, for this family of meshes, $\lambda_{\rm min}(\mat{A}_h)$ actually remains bounded below, which would indicate that the estimate \eqref{est:lambda.min} is not optimal; it can actually be proved (see Lemma \ref{lem:lmin.penta}) that for these meshes the minimal eigenvalue indeed remains bounded below. As a consequence, the condition number $\kappa(\mat{A}_h)$ does not grow as $\mathcal O(h_{\textrm{min}}^{-2})$ but as $\mathcal O(h_{\textrm{min}}^{-1})$, which is illustrated in Figure \ref{fig:penta.kappa}.

\begin{lemma}\label{lem:lmin.penta}
	For the family of penta-diagonal meshes, it holds that
	$\lambda_{\rm min}(\mat{A}_h)\gtrsim 1$.	
\end{lemma}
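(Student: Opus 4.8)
The plan is to prove, for this family, the reinforced coercivity estimate $\sum_{F\in\Fhi}\norm[F]{u_F}^2\lesssim\Ah(\uFh,\uFh)$ for every $\uFh\in\POLY{k}_0(\Fh)$, with a hidden constant independent of $\hmin$; since the bases are $\LTWO(F)$-orthonormal this is precisely \eqref{est:lambda.min} with $\Hmin{\Fh}$ replaced by a constant, hence $\lambda_{\rm min}(\mat{A}_h)\gtrsim 1$. Throughout write $\uluT=(\calST\uFT,\uFT)$ and $p_T=\pT{k+1}\uluT$, and set $E^2:=\Ah(\uFh,\uFh)$. The starting information, already established inside the proof of Theorem \ref{th:estimates} (the cancellation from \eqref{eq:ST.def} together with \eqref{eq:aT.lower.bound}) and inside the proof of the discrete Poincaré inequality \eqref{eq:discrete.poincare.faces}, is the control
\[
\sum_{T\in\Th}\Big(\norm[T]{\nabla p_T}^2+\hT^{-1}\norm[\bdryT]{\deltaFT{k}\uluT}^2\Big)+\sum_{T\in\Th}\norm[T]{p_T}^2\lesssim E^2.
\]

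The geometry of the penta-diagonal family enters through the following features: there are $\mathcal O(1)$ large elements of diameter $\approx\hmax\approx 1$ and $\mathcal O(\hmin^{-1})$ small elements of diameter $\approx\hmin$ forming the refined diagonal, every face has diameter $\approx\hmin$, each small element has a bounded number of faces and sits at bounded combinatorial distance from a large element; in particular $\sum_{T\text{ small}}\hT\lesssim 1$. The faces adjacent to a large element are immediate: for such $F$ one has $h_{T_F^+}+h_{T_F^-}\gtrsim\hmax\approx 1$, so the left inequality in \eqref{final.before.orthonormal} already yields $\sum_{F\text{ large-adjacent}}\norm[F]{u_F}^2\lesssim E^2$. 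There remain the faces lying between two small elements, where \eqref{final.before.orthonormal} controls only the weaker quantity $\hmin\norm[F]{u_F}^2$; this is the crux. Choosing a small $T$ with $F\subset\bdryT$ and using the identity $u_F=\piFzr{k}p_T-\deltaF{k}\uluT$, the continuous trace inequality \eqref{eq:continuous.trace}, and Poincaré--Wirtinger \eqref{eq:poincare} (with $|T|\approx\hT^2$), I would bound
\[
\norm[F]{u_F}^2\lesssim\norm[\bdryT]{p_T}^2+\norm[\bdryT]{\deltaFT{k}\uluT}^2\lesssim\hT\norm[T]{\nabla p_T}^2+\hT\,\ol{p}_T^{\,2}+\norm[\bdryT]{\deltaFT{k}\uluT}^2,
\]
where $\ol{p}_T:=\piTzr{0}p_T$ is the mean of $p_T$. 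Summing over small-small faces, the gradient terms give $\lesssim\hmin\sum_T\norm[T]{\nabla p_T}^2\lesssim E^2$ and the $\delta$-terms give $\lesssim\hmax\sum_T\hT^{-1}\norm[\bdryT]{\deltaFT{k}\uluT}^2\lesssim E^2$, so everything reduces to proving $\sum_{T\text{ small}}\hT\,\ol{p}_T^{\,2}\lesssim E^2$.

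The heart of the proof is a chaining estimate for the element means. For a face $F$ shared by $T$ and $T'$ one has $\piFzr{k}(p_T-p_{T'})=\deltaF{k}\uluT-\deltaF{k}\ul{u}_{T'}$, so the difference of face averages of $p_T$ and $p_{T'}$ is $\lesssim\hmin^{-1/2}\big(\norm[F]{\deltaF{k}\uluT}+\norm[F]{\deltaF{k}\ul{u}_{T'}}\big)$; combined with the Poincaré--Wirtinger bound relating a face average to the element mean, this gives $|\ol p_T-\ol p_{T'}|\lesssim\norm[T]{\nabla p_T}+\norm[T']{\nabla p_{T'}}+\hmin^{-1/2}\big(\norm[F]{\deltaF{k}\uluT}+\norm[F]{\deltaF{k}\ul{u}_{T'}}\big)$. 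Telescoping along a path of bounded length from each small $T$ to a neighbouring large element $T_L$, and using $\ol p_{T_L}^{\,2}\lesssim|T_L|^{-1}\norm[T_L]{p_{T_L}}^2\lesssim E^2$ for the finitely many large elements, I expect the weighted sum to collapse: the endpoint contributions give $\sum_{T\text{ small}}\hT\,\ol p_{T_L}^{\,2}\lesssim\sum_{T_L}\ol p_{T_L}^{\,2}\lesssim E^2$ (each large element is reached by $\mathcal O(\hmin^{-1})$ small elements and $\hT\approx\hmin$), while the gradient and $\delta$ contributions along the paths are absorbed into $\hmin\sum_T\norm[T]{\nabla p_T}^2$ and $\sum_T\norm[\bdryT]{\deltaFT{k}\uluT}^2$ respectively, both $\lesssim E^2$. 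This closes $\sum_{T\text{ small}}\hT\,\ol p_T^{\,2}\lesssim E^2$ and hence the lemma.

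The step I expect to be delicate is exactly this last chaining: one must check that the factor $\hmin^{-1/2}$ in the jump estimate is, after squaring and weighting by $\hT\approx\hmin$, precisely compensated by the $\hmin^{-1}$ in $\hT^{-1}\norm[\bdryT]{\deltaFT{k}\uluT}^2$, and that each element and face is visited only $\mathcal O(1)$ times by the chosen family of paths, so that no factor of the (diverging) number of small elements is picked up. The boundedness of the combinatorial distance from any small cell to a large one --- guaranteed by the fixed width of the refined diagonal --- is what keeps all constants independent of $\hmin$; without it the telescoping would accumulate an $\hmin$-dependent factor and the argument would degrade to the generic bound \eqref{est:lambda.min}.
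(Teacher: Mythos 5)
Your plan is correct and follows essentially the same route as the paper: both reduce the claim to a version of the discrete Poincar\'e inequality \eqref{eq:discrete.poincare.faces} without the $\hT$ weight on the left-hand side, and both establish it by telescoping along bounded-length paths from each small element to one of the two large elements, transporting the element mean of $\pT{k+1}\ulvT$ (the paper's $\pi_{T_2}^{0,0}\rmp_{T_2}^{k+1}\ul{v}_{T_2}$ is exactly your $\ol{p}_T$) and absorbing the jumps into the gradient and $\deltaFT{k}$ terms, with the endpoint controlled via $\norm[\Omega]{\ph{k+1}\ulvh}^2\lesssim \Ah(\uFh,\uFh)$ and $h_{T_\star}\approx 1$. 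The balance of $\hmin$ factors and the $\mathcal{O}(1)$ path multiplicity that you flag as delicate are indeed the points the paper checks explicitly (each small element appears at most $3$ times in the summed right-hand side).
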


\begin{proof}
We first note that even if the penta-diagonal meshes do not satisfy Assumption \ref{assum:star.shaped} (due to the two large elements with ``stairs'' boundary), the analysis carried out in the previous sections still applies. Indeed, we can easily find uniform bi-Lipschitz mappings between each of these elements and a ball of size comparable to these elements, which ensures that the trace inequality \eqref{eq:continuous.trace} still holds; since all elements contain a ball of size comparable to their diameters, the other relevant inequalities (approximation properties of projectors, discrete inverse inequalities) also remain valid.

An inspection of the proof of Theorem \ref{th:estimates} (see in particular \eqref{eq:Ah.lower.bound}) reveals that the bound on $\lambda_{\rm min}(\mat{A}_h)$ is a direct consequence of \eqref{eq:discrete.poincare.faces}. The result thus follows if we establish this improved version of \eqref{eq:discrete.poincare.faces}, in which the scaling factor $h_T$ has been removed from the left-hand side: for all $\ulvh\in\Uhklzr$,
\begin{equation}\label{eq:discrete.poincare.faces.penta}
	\sum_{T\in\Th}\norm[\bdryT]{\vFT}^2 \lesssim \sum_{T\in\Th}\Brac{\norm[T]{\nabla\pT{k+1}\ulvT}^2 + \hT^{-1}\norm[\bdryT]{\deltaFT{k}\ulvT}^2}.
\end{equation}

Let us take a face $F$ in one of the small elements. Assuming for example that $F$ is a vertical face, we can create a finite sequence of vertical faces $(F=F_1,F_2,\ldots,F_r)$ (with $r\le 3$) such that $F_r$ is a face of one of the two big elements in the mesh, say $T_\star$; see Figure \ref{fig:proof.penta} for an illustration.

\begin{figure}
\begin{center}
\input{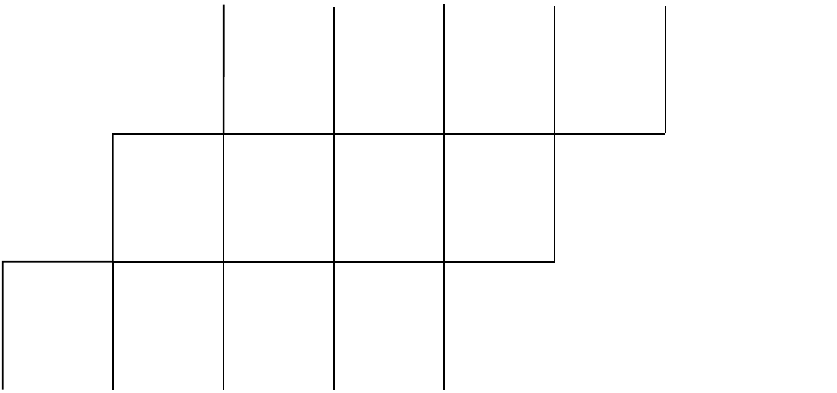_t}
\caption{Illustration of the proof of Lemma \ref{lem:lmin.penta}.}
\label{fig:proof.penta}
\end{center}
\end{figure}

Denoting by $(T_1,\ldots,T_r,T_{r+1}=T_\star)$ the elements encountered along the sequence $(F_1,\ldots,F_r)$, we can then write
\begin{align}\label{eq:penta.proof.1}
	\norm[F]{v_F}^2 \le{}& \norm[F]{v_{F_1}-\pi_{F_{1}}^{0, k}\rmp_{T_2}^{k+1}\ul{v}_{T_2}}^2 + \norm[F]{\rmp_{T_2}^{k+1}\ul{v}_{T_2} - \pi_{T_{2}}^{0, 0}\rmp_{T_2}^{k+1}\ul{v}_{T_2}}^2 + \norm[F]{\pi_{T_2}^{0,0}\rmp_{T_2}^{k+1}\ul{v}_{T_2}}^2 \nl
	\les \norm[\bdry T_2]{\delta_{\bdry T_2}^{k}\ul{v}_{T_2}}^2 + h_{T_2}\norm[T_2]{\nabla \rmp_{T_2}^{k+1}\ul{v}_{T_2}}^2 + \norm[F_2]{\pi_{T_2}^{0,0}\rmp_{T_2}^{k+1}\ul{v}_{T_2}}^2
\end{align}
where we have introduced $\pm\pi_{F_{1}}^{0, k}(\rmp_{T_2}^{k+1}\ul{v}_{T_2}-\pi_{T_2}^{0,0}\rmp_{T_2}^{k+1}\ul{v}_{T_2})=\pm\pi_{F_{1}}^{0, k}\rmp_{T_2}^{k+1}\ul{v}_{T_2}-\pi_{T_2}^{0,0}\rmp_{T_2}^{k+1}\ul{v}_{T_2}$ and used the $L^2(F)$-boundedness of $\pi_{F_{1}}^{0, k}$ and a triangle inequality in the first line, and invoked in the second line the bound $\norm[F_1]{\delta_{F_1}^{k}\ul{v}_{T_2}}^2 \le \norm[\bdry T_2]{\delta_{\bdry T_2}^{k}\ul{v}_{T_2}}^2$, the continuous trace inequality \eqref{eq:continuous.trace} and Poincar\'{e}--Wirtinger inequality \eqref{eq:poincare}, and the fact that $\pi_{T_2}^{0,0}\rmp_{T_2}^{k+1}\ul{v}_{T_2}$ is constant and $|F|=|F_2|$. By a similar argument it holds that
\begin{align}\label{eq:penta.proof.2}
	\norm[F_2]{\pi_{T_2}^{0,0}\rmp_{T_2}^{k+1}\ul{v}_{T_2}}^2 \le{}& \norm[F_2]{\pi_{T_2}^{0,0}\rmp_{T_2}^{k+1}\ul{v}_{T_2} - \pi_{F_2}^{0,k}\rmp_{T_2}^{k+1}\ul{v}_{T_2}}^2 + \norm[F_2]{\pi_{F_2}^{0,k}\rmp_{T_2}^{k+1}\ul{v}_{T_2} - v_{F_2}}^2 + \norm[F_2]{v_{F_2}}^2\nl
	\les h_{T_2}\norm[T_2]{\nabla \rmp_{T_2}^{k+1}\ul{v}_{T_2}}^2 +\norm[\bdry T_2]{\delta_{\bdry T_2}^{k}\ul{v}_{T_2}}^2 + \norm[F_2]{v_{F_2}}^2.
\end{align}
Thus, combining \eqref{eq:penta.proof.1} and \eqref{eq:penta.proof.2} we are able to write
\[
	\norm[F_1]{v_{F_1}}^2 \lesssim h_{T_2}\norm[T_2]{\nabla \rmp_{T_2}^{k+1}\ul{v}_{T_2}}^2 +\norm[\bdry T_2]{\delta_{\bdry T_2}^{k}\ul{v}_{T_2}}^2+ \norm[F_2]{v_{F_2}}^2.
\]
Iterating these estimates along the family $(F_1,F_2,\ldots,F_r)$, using $r\le 3$ and $h_{T_i}\lesssim 1$ we deduce that
\[
	\norm[F]{v_F}^2\lesssim \sum_{i=2}^r \left(\norm[T_i]{\nabla\pTi{k+1}\ulvTi}^2 + \hTi^{-1}\norm[\bdryTi]{\deltaFTi{k}\ulvTi}^2\right)+\norm[F_r]{v_{F_r}}^2.
\]
Summing this inequality over $F\in\Fh[T]$ and then over the small elements $T$ on the diagonal of the mesh, each of the small diagonal elements
appear at most $3$ times in the right-hand side, and the last boundary term is bounded above by $\norm[\partial T_\star]{v_{\partial T_\star}}^2$. This term can be estimated by introducing $\pi_{\bdry T_\star}^{0,k}\rmp^{k+1}_{T_\star}\ul{v}_{T_\star}$ as follows:
\begin{align*}
\norm[\partial T_\star]{v_{\partial T_\star}}^2\lesssim{}& \norm[\partial T_\star]{v_{\partial T_\star}-\pi_{\bdry T_\star}^{0,k}\rmp^{k+1}_{T_\star}\ul{v}_{T_\star}}^2+
\norm[\partial T_\star]{\pi_{\bdry T_\star}^{0,k}\rmp^{k+1}_{T_\star}\ul{v}_{T_\star}}^2\\
\lesssim{}& \norm[\bdryTs]{\deltaFTs{k}\ulvTs}^2 +h_{T_\star}\norm[T_i]{\nabla\pTs{k+1}\ulvTs}^2+ 
h_{T_\star}^{-1}\norm[T_\star]{\rmp^{k+1}_{T_\star}\ul{v}_{T_\star}}^2,\\ 
\lesssim{}& \hTs^{-1}\norm[\bdryTs]{\deltaFTs{k}\ulvTs}^2 +\norm[T_i]{\nabla\pTs{k+1}\ulvTs}^2+ \norm[T_\star]{\rmp^{k+1}_{T_\star}\ul{v}_{T_\star}}^2,
\end{align*}
where we have invoked the boundedness of $\pi_{\bdry T_\star}^{0,k}$ and the continuous trace inequality \eqref{eq:continuous.trace}, and in the final line we have used $h_{T_\star}\approx 1$, since $T_\star$ is one of the large elements whose diameter does not go to zero. Combining all these estimates leads to
$$
\sum_{T\in\Th}\norm[\bdryT]{\vFT}^2\lesssim \sum_{T\in\Th}\left(\norm[T]{\nabla\pT{k+1}\ulvT}^2 + \hT^{-1}\norm[\bdryT]{\deltaFT{k}\ulvT}^2\right) + \norm[\Omega]{\ph{k+1}\ulvh}^2.
$$
The estimate \eqref{eq:discrete.poincare.faces.penta} then follows in the same manner as in the proof of \eqref{eq:discrete.poincare.faces}.
\end{proof}

\section{Conclusions}\label{sec:conclusions}

In this work, we prove detailed eigenvalue and condition number bounds for the linear system matrix that arises from \ac{hho} discretisations of the Laplace problem. The analysis applies to general polytopal meshes and polynomial orders. It reveals the effect of small and highly distorted elements and faces on the conditioning of the linear system. Whereas highly distorted elements negatively impact condition numbers, faces shapes and sizes do not affect these bounds. With this information, we apply \ac{hho} methods on cut meshes. We combine simple background meshes, element intersection algorithms and an aggregation strategy to end up with well-posed \ac{hho} methods on cut meshes.

We carry out a detailed set of numerical experiments that are in agreement with the numerical analysis. First, we analyse the condition number as one coarsens polytopal meshes with many faces per element and arbitrarily small faces. Next, we show that the \ac{hho} method on aggregated cut meshes provides the expected condition number with respect to the mesh size. We also observe that the condition number of the algorithm is not affected by increasingly small cut elements. Finally, we consider a limit case with penta-diagonal squared meshes that motivates sharper condition number bounds for some specific mesh configurations.

Future work includes the combination of \ac{hho} methods with higher-order cut geometrical discretisations (curved boundaries) and the design of optimal and scalable preconditioners for these linear systems.

\section*{Declarations}

{\footnotesize{
\textbf{Funding} This work was partially supported by the Australian Government through the \emph{Australian Research Council}'s Discovery Projects funding scheme (grant number DP210103092). 

\medskip

\noindent\textbf{Competing Interests} The corresponding author states on behalf of all authors, that there is no conflict of interest. 

\medskip

\noindent\textbf{Code Availability} All code used in this article is available in open source libraries and duly cited. 

\medskip

\noindent\textbf{Data Availability} The data generated in this article is available from the corresponding author on reasonable request.}}

\bibliographystyle{plain}
\bibliography{references}

\begin{thebibliography}{10}

\bibitem{aghili.di-pietro.ea:2017:hp-HHO}
Joubine Aghili, Daniele~A Di~Pietro, and Berardo Ruffini.
\newblock An hp-hybrid high-order method for variable diffusion on general
  meshes.
\newblock {\em Computational Methods in Applied Mathematics}, 17(3):359--376,
  2017.

\bibitem{2110.01378}
Santiago Badia, Pere~A. Martorell, and Francesc Verdugo.
\newblock Geometrical discretisations for unfitted finite elements on explicit
  boundary representations.
\newblock {\em Journal of Computational Physics}, 460:111162, jul 2022.

\bibitem{badia2021linking}
Santiago Badia, Eric Neiva, and Francesc Verdugo.
\newblock Linking ghost penalty and aggregated unfitted methods.
\newblock {\em Computer Methods in Applied Mechanics and Engineering},
  388:114232, 2022.

\bibitem{Badia2020b}
Santiago Badia and Francesc Verdugo.
\newblock {Gridap: An extensible Finite Element toolbox in Julia}.
\newblock {\em Journal of Open Source Software}, 5(52):2520, 2020.

\bibitem{badia.verdugo.ea:2018:aggregated}
Santiago Badia, Francesc Verdugo, and Alberto~F. Mart{\'{\i}}n.
\newblock The aggregated unfitted finite element method for elliptic problems.
\newblock {\em Computer Methods in Applied Mechanics and Engineering},
  336:533--553, July 2018.

\bibitem{beirao-da-veiga.brezzi.ea:2013:basic}
Lauren\c{c}o {Beir{\~a}o da Veiga}, Franco Brezzi, Andrea Cangiani, Gianmarco
  Manzini, L~Donatella Marini, and Alessandro Russo.
\newblock Basic principles of virtual element methods.
\newblock {\em Mathematical Models and Methods in Applied Sciences},
  23(01):199--214, 2013.

\bibitem{beirao-da-veiga.canuto.ea:2021:equilibrium}
Lauren{\c{c}}o {Beir{\~a}o da Veiga}, Claudio Canuto, Ricardo~H Nochetto, and
  Giuseppe Vacca.
\newblock Equilibrium analysis of an immersed rigid leaflet by the virtual
  element method.
\newblock {\em Mathematical Models and Methods in Applied Sciences},
  31(07):1323--1372, 2021.

\bibitem{belytschko_arbitrary_2001}
T~Belytschko, N~Mo{\"{e}}s, S~Usui, and C~Parimi.
\newblock {Arbitrary discontinuities in finite elements}.
\newblock {\em International Journal for Numerical Methods in Engineering},
  50(4):993--1013, 2001.

\bibitem{burman2010ghost}
Erik Burman.
\newblock Ghost penalty.
\newblock {\em Comptes Rendus Mathematique}, 348(21-22):1217--1220, 2010.

\bibitem{burman.cicuttin.ea:2021:unfitted}
Erik Burman, Matteo Cicuttin, Guillaume Delay, and Alexandre Ern.
\newblock An unfitted hybrid high-order method with cell agglomeration for
  elliptic interface problems.
\newblock {\em SIAM Journal on Scientific Computing}, 43(2):A859--A882, 2021.

\bibitem{burman_cutfem_2015}
Erik Burman, Susanne Claus, P~Hansbo, M~G Larson, and Andr{\'{e}} Massing.
\newblock {CutFEM: Discretizing Geometry and Partial Differential Equations}.
\newblock {\em International Journal for Numerical Methods in Engineering},
  104(7):472--501, 2015.

\bibitem{cangiani.dong.ea:2017:discontinuous}
Andrea Cangiani, Zhaonan Dong, Emmanuil~H. Georgoulis, and Paul Houston.
\newblock {\em {$hp$}-version discontinuous {G}alerkin methods on polygonal and
  polyhedral meshes}.
\newblock SpringerBriefs in Mathematics. Springer, Cham, 2017.

\bibitem{castillo:2002:performance}
Paul Castillo.
\newblock Performance of discontinuous galerkin methods for elliptic pdes.
\newblock {\em SIAM Journal on Scientific Computing}, 24(2):524--547, 2002.

\bibitem{Cockburn.Di-Pietro.ea:16}
B.~Cockburn, D.~A. Di~Pietro, and A.~Ern.
\newblock Bridging the {Hybrid High-Order} and {Hybridizable Discontinuous
  Galerkin} methods.
\newblock {\em ESAIM: Math. Model. Numer. Anal.}, 50(3):635--650, 2016.

\bibitem{Cockburn2013}
B.~Cockburn, O.~Dubois, J.~Gopalakrishnan, and S.~Tan.
\newblock Multigrid for an {HDG} method.
\newblock {\em {IMA} Journal of Numerical Analysis}, 34(4):1386--1425, October
  2013.

\bibitem{Cockburn2009}
Bernardo Cockburn, Jayadeep Gopalakrishnan, and Raytcho Lazarov.
\newblock Unified hybridization of discontinuous galerkin, mixed, and
  continuous galerkin methods for second order elliptic problems.
\newblock {\em {SIAM} Journal on Numerical Analysis}, 47(2):1319--1365, January
  2009.

\bibitem{de-prenter.erhoosel.ea:2017:condition}
F.~{de Prenter}, C.V. Verhoosel, G.J. {van Zwieten}, and E.H. {van Brummelen}.
\newblock Condition number analysis and preconditioning of the finite cell
  method.
\newblock {\em Computer Methods in Applied Mechanics and Engineering},
  316:297--327, 2017.
\newblock Special Issue on Isogeometric Analysis: Progress and Challenges.

\bibitem{di-pietro.droniou:2020:hybrid}
Daniele~Antonio Di~Pietro and J{\'e}r{\^o}me Droniou.
\newblock {\em The Hybrid High-Order Method for Polytopal Meshes: Design,
  Analysis, and Applications}, volume~19 of {\em Modeling, Simulation and
  Applications}.
\newblock Springer International Publishing,
  \url{https://hal.archives-ouvertes.fr/hal-02151813}, 01 2020.

\bibitem{di-pietro.ern:2015:hybrid}
Daniele~Antonio Di~Pietro and Alexandre Ern.
\newblock A hybrid high-order locking-free method for linear elasticity on
  general meshes.
\newblock {\em Computer Methods in Applied Mechanics and Engineering},
  283:1--21, 2015.

\bibitem{di-pietro.ern.ea:2014:arbitrary}
Daniele~Antonio Di~Pietro, Alexandre Ern, and Simon Lemaire.
\newblock An arbitrary-order and compact-stencil discretization of diffusion on
  general meshes based on local reconstruction operators.
\newblock {\em Computational Methods in Applied Mathematics}, 14(4):461--472,
  2014.

\bibitem{hhocode}
J{\'e}r{\^o}me Droniou.
\newblock Hardcore.

\bibitem{droniou:2020:interplay}
J{\'e}r{\^o}me Droniou.
\newblock Interplay between diffusion anisotropy and mesh skewness in hybrid
  high-order schemes.
\newblock In {\em International Conference on Finite Volumes for Complex
  Applications}, pages 3--23. Springer, 2020.

\bibitem{droniou.yemm:2021:robust}
J\'er\^ome Droniou and Liam Yemm.
\newblock Robust hybrid high-order method on polytopal meshes with small faces.
\newblock {\em Comput. Methods Appl. Math.}, page 26p, 2021.

\bibitem{Ern2006Jan}
Alexandre Ern and Jean-Luc Guermond.
\newblock {Evaluation of the condition number in linear systems arising in
  finite element approximations}.
\newblock {\em ESAIM: Mathematical Modelling and Numerical Analysis},
  40(1):29--48, 2006.

\bibitem{hansbo2002unfitted}
Anita Hansbo and Peter Hansbo.
\newblock An unfitted finite element method, based on {N}itsche’s method, for
  elliptic interface problems.
\newblock {\em Computer methods in applied mechanics and engineering},
  191(47-48):5537--5552, 2002.

\bibitem{johansson2013high}
August Johansson and Mats~G. Larson.
\newblock {A high order discontinuous {G}alerkin {N}itsche method for elliptic
  problems with fictitious boundary}.
\newblock {\em Numerische Mathematik}, 123(4):607--628, 2013.

\bibitem{Mascotto2018}
Lorenzo Mascotto.
\newblock Ill-conditioning in the virtual element method: Stabilizations and
  bases.
\newblock {\em Numerical Methods for Partial Differential Equations},
  34(4):1258--1281, March 2018.

\bibitem{Neiva2021}
Eric Neiva and Santiago Badia.
\newblock Robust and scalable h-adaptive aggregated unfitted finite elements
  for interface elliptic problems.
\newblock {\em Computer Methods in Applied Mechanics and Engineering},
  380:113769, July 2021.

\bibitem{Schillinger2015}
Dominik Schillinger and Martin Ruess.
\newblock The {F}inite {C}ell {M}ethod: {A} review in the context of
  higher-order structural analysis of {CAD} and image-based geometric models.
\newblock {\em Archives of Computational Methods in Engineering},
  22(3):391--455, 2015.

\bibitem{GridapEmbedded-jl}
Francesc Verdugo, Eric Neiva, and Santiago Badia.
\newblock {GridapEmbedded. Version 0.7.}, October 2021.
\newblock Available at \url{https://github.com/gridap/GridapEmbedded.jl}.

\end{thebibliography}

\end{document}